\newtheorem*{theoremA}{Theorem A}
\newtheorem*{theoremB}{Theorem B}
\newtheorem{theorem}{Theorem} [section]
\newtheorem{lemma}[theorem]{Lemma}
\newtheorem{corollary}[theorem]{Corollary}
\newtheorem{proposition}[theorem]{Proposition}
\theoremstyle{definition}
\newtheorem{remark}[theorem]{Remark}
\numberwithin{equation}{section}
\newcommand{\R}{R\{-\}}
\newcommand{\RR}{\mathcal{R}\{-\}}
\newcommand{\sCoalg}{\mathrm{sCoalg}}
\newcommand{\Coalg}{\mathrm{Coalg}}
\newcommand{\sMod}{\mathrm{sMod}}
\newcommand{\Mod}{\mathrm{Mod}}
\newcommand{\sSet}{\mathrm{sSet}}
\newcommand{\inj}{\mathrm{inj}}
\newcommand{\PSh}{\mathrm{PSh(\mathcal{C})}}
\newcommand{\sPSh}{\mathrm{sPSh(\mathcal{C})}}
\newcommand{\sPShG}{\mathrm{sPSh(\mathcal{C}, G)}}
\newcommand{\Sh}{\mathrm{Sh(\mathcal{C})}}
\newcommand{\Map}{\mathrm{Map}}
\newcommand{\Ob}{\mathrm{Ob}}
\begin{document}
\title{Simplicial presheaves of coalgebras}
\author{George Raptis}
\address{Universit\"{a}t Osnabr\"{u}ck, 
Institut f\"{u}r Mathematik, 
Albrechtstrasse 28a,
49069 Osnabr\"{u}ck, Germany}
\email{graptis@mathematik.uni-osnabrueck.de}
\date{}

\begin{abstract}
The category of simplicial $\mathcal{R}$-coalgebras over a presheaf of commutative unital rings on a small Grothendieck site is endowed with a left proper, simplicial, cofibrantly generated model category structure where the weak equivalences are the local weak equivalences of the underlying simplicial presheaves. This model category is naturally linked to the $\mathcal{R}$-local homotopy theory of simplicial presheaves and the homotopy theory of simplicial $\mathcal{R}$-modules by Quillen adjunctions. We study the comparison with the $\mathcal{R}$-local homotopy category of simplicial presheaves in the special case where $\mathcal{R}$ is a presheaf of algebraically closed (or perfect) fields. If $\mathcal{R}$ is a presheaf of algebraically closed fields, we show that the $\mathcal{R}$-local homotopy category of simplicial presheaves embeds fully faithfully in the homotopy category of simplicial $\mathcal{R}$-coalgebras. 
\end{abstract}

\maketitle

\section{Introduction and statement of results}

Let $\PSh$ denote the category of set-valued presheaves on a small Grothendieck site $\mathcal{C}$. Let $\mathcal{R}$ be a presheaf of commutative unital rings on $\mathcal{C}$. The category $\Mod_{\mathcal{R}}$ of presheaves of $\mathcal{R}$-modules is an abelian locally presentable category with a closed symmetric monoidal pairing that is given by the pointwise tensor product of $\mathcal{R}$-modules. Let $\Coalg_{\mathcal{R}}$ denote the category of cocommutative, coassociative, counital $\mathcal{R}$-coalgebras. The forgetful functor $\Phi: \Coalg_{\mathcal{R}} \to \Mod_{\mathcal{R}}$ is a left adjoint where the right adjoint is the cofree $\mathcal{R}$-coalgebra functor. 

Let $\sPSh$ denote the category of simplicial presheaves on $\mathcal{C}$. The class of \textit{local} weak equivalences between simplicial presheaves defines a homotopy theory that has been studied extensively in the 
literature and forms the subject known as homotopical sheaf theory. Naturally, the subject began with the 
study of the corresponding homotopy theory of simplicial sheaves (see \cite{Br}, \cite{BG}, \cite{Jo}). The shift to the more flexible category of simplicial presheaves is due to Jardine who realized that \textit{``\ldots it is not so much the ambient topos that is creating the homotopy theory as it is the topology of the underlying site''} \cite{Ja1}. On the other hand, the dependence on the choice of a Grothendieck site for the ambient topos is immaterial: a local weak equivalence of simplicial presheves is the same as a local weak equivalence of the associated simplicial sheaves, and so, in particular, the two homotopy theories are equivalent (see \cite{Ja2}). The class of local weak equivalences is a refinement of the sectionwise weak equivalences that takes into account the topology on $\mathcal{C}$. For example, in the case where the topos of sheaves $\Sh$ has enough points, a map of simplicial presheaves is a local weak equivalence if it induces a weak equivalence of simplicial sets at all stalks. Note that this is the same as a sectionwise weak equivalence when $\mathcal{C}$ has the trivial topology. 

Let $\sCoalg_{\mathcal{R}}$ denote the category of simplicial $\mathcal{R}$-coalgebras. The purpose of this paper is to study the homotopy theory of simplicial $\mathcal{R}$-coalgebras and compare it with that of simplicial presheaves. The method to pursue this follows the axiomatic approach of the theory of model categories. We assume that the reader is familiar with the theory of model categories and what it is good for. For background material, we recommend the recent monographs of Hirschhorn \cite{Hi} and Hovey \cite{Ho}. 

A morphism $f: A \to B$ of simplicial $\mathcal{R}$-coalgebras is called a \textit{(local) weak equivalence} if it defines a local weak equivalence between the underlying simplicial presheaves in $\sPSh$. This class of weak equivalences will be denoted here by $\mathcal{W}_{\mathcal{R}}$. A morphism $f: A \to B$ in $\sCoalg_{\mathcal{R}}$ is called a \textit{$\Phi$-monomorphism} if the morphism between the underlying simplicial $\mathcal{R}$-modules is a monomorphism.

\begin{theoremA}
There is a left proper, simplicial, cofibrantly generated model category structure on $\sCoalg _{\mathcal{R}}$ where the class of weak equivalences is $\mathcal{W}_{\mathcal{R}}$ and the set of $\Phi$ - monomorphisms between $\kappa$-presentable objects is a generating set of cofibrations (for any choice of a large enough regular cardinal $\kappa$). 
\end{theoremA}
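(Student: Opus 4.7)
The plan is to invoke Jeff Smith's recognition theorem for combinatorial model structures on a locally presentable category (cf.\ \cite{Hi}, \cite{Ho}): if $I$ is a set of morphisms and $\mathcal{W}$ is a class of morphisms satisfying two-out-of-three, closed under retracts, and accessibly embedded in the arrow category, then requiring $I\text{-inj} \subseteq \mathcal{W}$ and closure of $\mathcal{W} \cap \mathrm{cof}(I)$ under pushouts and transfinite compositions yields a cofibrantly generated model structure with cofibrations $\mathrm{cof}(I)$ and weak equivalences $\mathcal{W}$. To set up, I would verify that $\sCoalg_{\mathcal{R}}$ is locally presentable by describing $\Coalg_{\mathcal{R}}$ as the category of models of a finitary limit sketch inside the locally presentable $\Mod_{\mathcal{R}}$ and then passing to simplicial objects. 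The accessibility of $\mathcal{W}_{\mathcal{R}}$ follows from that of the local weak equivalences in $\sPSh$ (the local injective model structure of \cite{Ja2} being combinatorial) via the accessible forgetful functor $\sCoalg_{\mathcal{R}} \to \sPSh$. Fix a regular cardinal $\kappa$ large enough that $\sCoalg_{\mathcal{R}}$ is $\kappa$-locally presentable, the forgetful functor to $\sPSh$ preserves $\kappa$-presentable objects and $\kappa$-filtered colimits, and $\mathcal{W}_{\mathcal{R}}$ is closed under $\kappa$-filtered colimits in the arrow category; let $I$ consist of $\Phi$-monomorphisms between $\kappa$-presentable simplicial $\mathcal{R}$-coalgebras.

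The first structural claim is that $\mathrm{cof}(I)$ coincides with the class of $\Phi$-monomorphisms. Since $\Phi$ preserves all colimits, being a left adjoint, and monomorphisms in the abelian category $\sMod_{\mathcal{R}}$ are stable under pushouts, transfinite composition, and retracts, one obtains $\mathrm{cof}(I) \subseteq \{\Phi\text{-monomorphisms}\}$. Conversely, every $\Phi$-monomorphism $A \hookrightarrow B$ can be written as a $\kappa$-filtered colimit in the arrow category of $\Phi$-monomorphisms between $\kappa$-presentable subcoalgebras of $B$; this rests on local presentability together with an extension to the presheaf setting of the fundamental theorem of coalgebras, asserting that subcoalgebras are closed under $\kappa$-filtered unions.

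For the closure of $\mathcal{W}_{\mathcal{R}} \cap \mathrm{cof}(I)$ under pushouts and transfinite compositions, I would use that the forgetful functor $\sCoalg_{\mathcal{R}} \to \sPSh$ factors through $\Phi$ and preserves the relevant colimits. A pushout in $\sCoalg_{\mathcal{R}}$ of a local weak equivalence along a $\Phi$-monomorphism is thus carried to a pushout in $\sPSh$ of a local weak equivalence along a monomorphism, which remains a local weak equivalence by left properness of the local injective model structure on $\sPSh$ (where every monomorphism is an injective cofibration, cf.\ \cite{Ja2}); transfinite composites are handled by the $\kappa$-accessibility of $\mathcal{W}_{\mathcal{R}}$. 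This same argument simultaneously delivers left properness of the resulting model structure on $\sCoalg_{\mathcal{R}}$.

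The principal obstacle is the acyclicity-of-fibrations condition $I\text{-inj} \subseteq \mathcal{W}_{\mathcal{R}}$: given $f : A \to B$ with right lifting property against every $\Phi$-monomorphism, one must show its underlying map of simplicial presheaves is a local weak equivalence. The strategy is to exploit the rich supply of $\Phi$-monomorphisms coming from the free-simplicial-coalgebra functor $\mathcal{R}[-]$ (with cocommutative coproduct inherited from the diagonal on sets) applied to generating monomorphisms of the local injective model structure on $\sPSh$, and to combine the lifting information obtained (by transposing across the adjunction between $\mathcal{R}[-]$ and its right adjoint, and by probing at enough sections) into a local weak equivalence statement for $f$; the hard part is that the underlying simplicial presheaf of a coalgebra is not itself freely generated, so an approximation argument relating this underlying object to colimits of free coalgebras is needed. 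Finally, for the simplicial enrichment I would set $A \otimes K := A \otimes_{\mathcal{R}} \mathcal{R}[K]$ and verify Quillen's SM7 axiom by reducing, via the forgetful functor to $\sPSh$, to the corresponding statement for the local injective model structure.
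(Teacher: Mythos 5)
The central gap is the acyclicity condition $\mathcal{I}\text{-inj} \subseteq \mathcal{W}_{\mathcal{R}}$, which you correctly single out as the principal obstacle but do not prove. The strategy you sketch --- lifting against $\mathcal{R}\{j\}$ for generating monomorphisms $j$ of $\sPSh_{\mathrm{inj}}$ and transposing across the adjunction --- only yields information about $\rho(q)$, the presheaf of $\mathcal{R}$-points (grouplike elements), and says nothing about the underlying simplicial presheaf of the coalgebra, which is what membership in $\mathcal{W}_{\mathcal{R}}$ is about; the ``approximation argument'' you defer is precisely the substance of the theorem. The paper's actual argument is of a different nature: given $q \in \mathcal{I}\text{-inj}$, one shows that $\Phi(q)$ has the right lifting property against the finitely presentable generating cofibrations of $\sMod^{\mathrm{proj}}_{\mathcal{R}}$, hence is a sectionwise trivial fibration and in particular a local weak equivalence. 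This is done by writing $q$ as a $\kappa$-filtered colimit of maps $f\colon A \to B$ between $\kappa$-presentable coalgebras and factoring each such $f$ through its mapping cylinder $M(f) = B \cup_{A} \bigl(A \otimes \mathcal{R}\{\underline{\Delta}^1\}\bigr)$: the first leg $A \to M(f)$ is a $\Phi$-monomorphism between $\kappa$-presentables (so $q$ lifts against it, being in $\mathcal{I}\text{-inj}$), while the second leg $M(f)\to B$ becomes a sectionwise trivial fibration of simplicial modules after applying $\Phi$ (so the generating projective cofibration lifts against it). This mapping-cylinder factorization is the key idea absent from your proposal.

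There are two further problems. First, you repeatedly reduce colimit statements (closure of $\mathrm{cof}(\mathcal{I}) \cap \mathcal{W}_{\mathcal{R}}$ under pushouts, left properness, SM7) to $\sPSh_{\mathrm{inj}}$ on the grounds that ``the forgetful functor to $\sPSh$ preserves the relevant colimits.'' It does not: pushouts in $\sCoalg_{\mathcal{R}}$ are created in $\sMod_{\mathcal{R}}$ (since $\Phi$ is a left adjoint), but the further forgetful functor $\sMod_{\mathcal{R}} \to \sPSh$ is a right adjoint and does not preserve pushouts. The correct target is the injective local model structure on $\sMod_{\mathcal{R}}$ (cofibrations the monomorphisms, weak equivalences the local ones), whose existence must itself be established --- the paper does this via Boolean localization. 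Second, your claim that $\mathrm{cof}(\mathcal{I})$ coincides with the class of all $\Phi$-monomorphisms is both unnecessary for the statement and unjustified: the ``fundamental theorem of coalgebras'' argument you invoke breaks down over general rings because $\otimes$ is not left exact, so $\Coalg_{\mathcal{R}}$ is not closed under intersections of subobjects in $\Mod_{\mathcal{R}}$, and the paper explicitly records this identification as an open question. Relatedly, presenting $\Coalg_{\mathcal{R}}$ by a finitary limit sketch would give local \emph{finite} presentability, which is not available here: the presentability rank genuinely depends on the cardinalities of the rings $\mathcal{R}(U)$ (Barr--Fox).
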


The choice of $\kappa$ will be clarified in the proof of the theorem in section \ref{TheoremA}. This result was proved by Goerss \cite{G} in the case where $\mathcal{C}$ is the terminal category (i.e., $\sPSh = \sSet$) and $\mathcal{R}$ is given by a single field. Although the structural properties of coalgebras over a field are well-studied in the literature (e.g. see \cite{Swe}), the situation in the general case of a commutative ring is more complicated and less well-understood. Among the basic reasons that make the case of fields special are, of course, the exactness of the tensor product and the available duality techniques. Both of them play a role in Goerss' proof. There is yet another relevant distinctive feature of the case of fields that relates to the (possibility of the) introduction of higher cardinals in the proof of Theorem A. According to the  \textit{fundamental theorem of coalgebras}, due to Sweedler \cite{Swe}, if $\mathbb{F}$ is a field, every $\mathbb{F}$-coalgebra is the filtered colimit of its finite dimensional sub-coalgebras. Moreover, finite dimensional coalgebras are finitely presentable objects in $\Coalg_{\mathbb{F}}$. This property may fail in the case of an arbitrary (presheaf of) commutative ring(s), however an appropriate analogous statement can be formulated in this general case if we allow a higher rank of presentability. This is because the category of $\mathcal{R}$-coalgebras is known to be locally $\lambda$-presentable for some regular cardinal $\lambda$ by results of Barr \cite{Ba} and Fox \cite{F} (see also \cite{Po1}). This fact together with additional background material about $\mathcal{R}$-coalgebras will be discussed in section \ref{background}.

The idea to apply methods from the theory of locally presentable categories in homotopical algebra originates from unpublished work of J. H. Smith. In particular, the theory of combinatorial model categories, introduced by J.H. Smith, has provided the theory of model categories with powerful set-theoretical techniques. Our proof of Theorem A is heavily based on such ideas. We will assume that the reader is familiar with the basic theory of locally presentable and accessible categories. A detailed account can be found in the monograph by Ad\'{a}mek-Rosick\'{y} \cite{AR}. For background material about combinatorial model categories, see the articles \cite{Be}, \cite{Du}, \cite{Ro}.

Following the work of Goerss \cite{G}, the motivation for studying the homotopy theory of simplicial $\mathcal{R}$-coalgebras comes from its connection with the $\mathcal{R}$-\textit{local} homotopy theory of simplicial presheaves, i.e., the Bousfield localization of the model category of simplicial presheaves at the class of $\mathcal{R}$-homology equivalences. Let us first discuss this connection in the case where $\mathcal{C}$ is the terminal category and let $R$ be a commutative ring with unit. The simplicial $R$-chains $R\{X\}$ of a simplicial set $X$ form naturally a simplicial $R$-coalgebra. The comultiplication is induced by the diagonal map $\Delta: X \to X \times X$ and the counit by the map $X \to \Delta^0$. Moreover, there is an adjunction
\begin{equation} \label{adjunction}
\R: \sSet \rightleftarrows \sCoalg_R : \mathcal{\rho}
\end{equation}
where the right adjoint is the functor of $R$-points defined by $$\rho(A)_n=\Coalg_R(R, A_n).$$ An important observation is that the canonical unit map $X \to \mathcal{\rho}R\{X\}$ is an isomorphism, for every simplicial set $X$, when $R$ has 
no non-trivial idempotents. Moreover, the adjunction \eqref{adjunction} is a Quillen adjunction between the standard model category structure on $\sSet$ and the model category of Theorem A. Therefore it induces a new Quillen adjunction 
\begin{equation} \label{adjunction2}
 \R: \mathrm{L}_R \sSet \leftrightarrows \sCoalg_R : \mathcal{\rho}
\end{equation}
where $\mathrm{L}_R \sSet$ denotes the Bousfield localization of $\sSet$ at the $R$-homology equivalences, i.e., the 
class of maps $f: X \to Y$ such that $H_*(f,R)$ is an isomorphism (see \cite{Bo}). Recall that the classical Dold-Kan correspondence (e.g. see \cite{We}) shows that $f$ is an $R$-homology equivalence iff $R\{f\}$ is a weak equivalence of simplicial $R$-modules. 

Then this prompts the question whether the additional coalgebraic structure on the simplicial $R$-chains could suffice in order to produce a faithful appoximation to the homotopy theory of spaces localized at the $R$-homology equivalences. By taking (functorial) cofibrant and fibrant replacements respectively, one obtains a derived adjunction between the respective homotopy categories,
\begin{equation} \label{adjunction3}
\mathbb{L}\R: \mathrm{Ho}(\mathrm{L}_R \sSet) \rightleftarrows \mathrm{Ho}(\sCoalg_R) : \mathbb{R}\mathcal{\rho}
\end{equation}
and the derived unit transformation gives a canonical map $$X \to \mathbb{R}\mathcal{\rho}(R\{X\})$$ from every simplicial set $X$ to an $R$-$local$ space, i.e., a fibrant object in $\mathrm{L}_R \sSet$. Goerss \cite[Theorem C]{G} proved that this map is an $R$-homology equivalence when $R$ is an algebraically closed field. An equivalent statement is that the functor $\mathbb{L} \R$ is fully faithful. More generally, he showed that for any perfect field $\mathbb{F}$ with algebraic closure $\overline{\mathbb{F}}$ and profinite Galois group $G$, the derived unit map can be identified with the map from $X$ into the fixed points of the localization of $X$ at the $\overline{\mathbb{F}}$-homology equivalences where $X$ is regarded as a simplicial $G$-set endowed with the trivial $G$-action (see \cite[Theorem E]{G}). We do not know of any appropriate extension of this remarkable result to an arbitrary ring.

The general case of an arbitrary small site $\mathcal{C}$ and a presheaf of commutative unital rings $\mathcal{R}$ is completely analogous. The (sectionwise) free $\mathcal{R}$-module functor factors through $\sCoalg_{\mathcal{R}}$ 
and there is a Quillen adjunction,
\begin{equation} \label{Q-adjunction1}
 \RR: \sPSh_{\mathrm{inj}} \to \sCoalg_{\mathcal{R}} : \rho
\end{equation}
where $\sPSh_{\mathrm{inj}}$ denotes the model category of simplicial presheaves by Jardine \cite{Ja1}, \cite{Ja2}. Some basic facts about the various model category structures on simplicial presheaves will be reviewed in section \ref{recollect}. The functor of $\mathcal{R}$-points $\rho$ is defined similarly as above, by
\begin{displaymath}
\rho(A)(U)_n =\Coalg_{\mathcal{R}(U)}(\mathcal{R}(U), A(U)_n).
\end{displaymath}
The Quillen adjunction \eqref{Q-adjunction1} will be studied in section \ref{Qadjunctions}. We show that this adjunction can be used to endow $\sCoalg_{\mathcal{R}}$ with a different model category structure where the weak equivalences are pulled back from $\sPSh_{\mathrm{inj}}$ via the functor $\rho$ of $\mathcal{R}$-points. In addition, it follows easily, and somewhat surprisingly, that the adjunction \eqref{Q-adjunction1} defines a Quillen equivalence between this new model category, denoted by $\sCoalg^{\rho}_{\mathcal{R}}$, and $\sPSh_{\mathrm{inj}}$. This produces an alternative point of view for the comparison between the homotopy theory of simplicial presheaves and simplicial $\mathcal{R}$-coalgebras as it can be also modelled by the identity functor $1: \sCoalg^{\rho}_{\mathcal{R}} \to \sCoalg_{\mathcal{R}}$ as a left Quillen functor.

Similarly to the case of a single commutative ring, the comparison via the Quillen adjunction \eqref{Q-adjunction1} can only relate to the $\mathcal{R}$-local part of $\sPSh_{\mathrm{inj}}$. A morphism $f: X \to Y$ between simplicial presheaves is called an \textit{$\mathcal{R}$-homology equivalence} if $\mathcal{R}\{f\}$ is a weak equivalence. The left Bousfield localization $\mathrm{L}_{\mathcal{R}} \sPSh_{\mathrm{inj}}$ of $\sPSh_{\mathrm{inj}}$ at the class of $\mathcal{R}$-homology equivalences exists and so the adjunction \eqref{Q-adjunction1} induces a new Quillen adjunction $\RR: \mathrm{L}_{\mathcal{R}} \sPSh_{\mathrm{inj}} \to \sCoalg_{\mathcal{R}} : \rho$. Based on the structure theory of coalgebras over an algebraically closed field and the methods of Goerss \cite{G}, we prove the following theorem in section \ref{ThmB}.

\begin{theoremB}
Let $\mathscr{F}$ be a presheaf of algebraically closed fields. Then the functor 
\begin{equation*} \label{Q-adjunction2}
\mathbb{L} \mathscr{F}\{-\}: \mathrm{Ho}(\mathrm{L}_{\mathscr{F}} \sPSh_{\mathrm{inj}}) \to \mathrm{Ho}(\sCoalg_{\mathscr{F}})
\end{equation*}
is fully faithful.
\end{theoremB}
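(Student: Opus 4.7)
The plan is to reduce the fully faithfulness of $\mathbb{L}\mathscr{F}\{-\}$ to showing that the derived unit $\eta_X: X \to \mathbb{R}\rho(\mathscr{F}\{X\})$ of the Quillen adjunction \eqref{Q-adjunction1} is an $\mathscr{F}$-homology equivalence for every simplicial presheaf $X$. Since every object of $\sPSh_{\inj}$ is cofibrant and $\mathscr{F}\{-\}$ is left Quillen, $\mathscr{F}\{X\}$ is already cofibrant in $\sCoalg_\mathscr{F}$, so one only needs a fibrant replacement $j: \mathscr{F}\{X\}\xrightarrow{\sim} A$; the derived unit is then represented by the composite $X \to \rho\mathscr{F}\{X\} \xrightarrow{\rho(j)} \rho(A)$.

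The first map $X \to \rho\mathscr{F}\{X\}$ is an isomorphism of simplicial presheaves. This is a pointwise statement: for each $U \in \mathcal{C}$ and each simplicial degree $n$ it amounts to saying that the group-like elements of the free $\mathscr{F}(U)$-coalgebra on a set $S$ are precisely $S$, which is classical because the field $\mathscr{F}(U)$ has no nontrivial idempotents. Hence $\eta_X$ is identified with $\rho(j)$, and the task reduces to proving that $\rho(j)$ is an $\mathscr{F}$-homology equivalence in $\sPSh$.

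This last step is the technical heart of the argument, and the plan is to adapt the proof of \cite[Theorem C]{G} to the presheaf setting. The crucial structural input is the classification of coalgebras over an algebraically closed field: every such coalgebra decomposes as a coproduct of its pointed irreducible components, indexed by its group-like elements, and each such component is controlled by its primitives through the coradical filtration. Applying this pointwise to each coalgebra $A(U)_n$ yields a functorial filtration on $A$ which, combined with the identification $\rho\mathscr{F}\{X\}\cong X$, should allow $\rho(A)$ to be compared, up to local $\mathscr{F}$-homology equivalence, with the $\mathscr{F}$-localization of $X$ inside $\sPSh_{\inj}$.

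I expect the main obstacle to be the interplay between fibrant replacement in $\sCoalg_\mathscr{F}$ and the Grothendieck topology on $\mathcal{C}$: fibrant replacement does not commute in general with passage to sections or stalks, so one cannot reduce directly to the single-field statement of Goerss. Circumventing this will require a local characterization of $\mathscr{F}$-homology equivalences (for instance stalkwise when the site has enough points, or via hypercover descent) together with a verification that the coradical tower on $A$ assembles compatibly with the topology on $\mathcal{C}$. Once this compatibility is in place, Goerss's argument can be run pointwise and reassembled into the desired local $\mathscr{F}$-homology equivalence.
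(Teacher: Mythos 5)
Your reduction of full faithfulness to the derived unit, and your identification of $X \to \rho\mathscr{F}\{X\}$ as an isomorphism (no nontrivial idempotents in a field), are both correct and match the paper. The gap is in what you yourself call the technical heart: you do not prove that $\rho(j)$ is an $\mathscr{F}$-homology equivalence, and the route you sketch --- running Goerss stalkwise, controlling each pointed irreducible component by its primitives via the coradical filtration, and then checking that the coradical tower and fibrant replacement are compatible with the topology on $\mathcal{C}$ --- is both left open and aimed in the wrong direction. The argument that actually works (in Goerss and in this paper) uses no filtration and no stalkwise analysis. The only structural input is the decomposition theorem: over an algebraically closed field the \'etale part of a coalgebra $A$ is the subcoalgebra spanned by its points, $\acute{E}t(A)\cong \mathbb{F}\{\rho(A)\}$, and the inclusion $\acute{E}t(A)\subseteq A$ is a \emph{naturally split} monomorphism of coalgebras. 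The one genuinely presheaf-theoretic point is that this splitting is natural not only in $A$ but also along extensions $\mathbb{F}\subseteq\mathbb{K}$ of algebraically closed fields, where $\rho(A)\cong\rho(A\otimes_{\mathbb{F}}\mathbb{K})$ (Parker); this is what lets the sectionwise splittings assemble into a natural retraction $A\to\mathscr{F}\{\rho(A)\}$ of simplicial $\mathscr{F}$-coalgebras.

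Given that, the heart of the proof is a retract argument: for any weak equivalence $f\colon A\to B$ in $\sCoalg_{\mathscr{F}}$ (in particular your fibrant replacement $j$), the map $\mathscr{F}\{\rho(f)\}$ is a retract of $f$ in the arrow category; local weak equivalences are closed under retracts, so $\mathscr{F}\{\rho(f)\}$ is a local weak equivalence, which is precisely the statement that $\rho(f)$ is an $\mathscr{F}$-homology equivalence. Hence $\rho$ preserves \emph{all} weak equivalences, the derived unit is represented by $X\cong\rho\mathscr{F}\{X\}\to\rho(\mathscr{F}\{X\}^{f})$, and this map is an $\mathscr{F}$-homology equivalence. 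Your anticipated obstacles --- fibrant replacement not commuting with sections or stalks, the need for a local characterization of $\mathscr{F}$-homology equivalences --- simply do not arise, because the retract argument is carried out globally in $\sCoalg_{\mathscr{F}}$. Without the naturally split \'etale part your outline does not close up, so as written the proof is missing its key lemma.
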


Theorem B is a generalization of Goerss' theorem \cite[Theorem C]{G} to the context of simplicial presheaves. We will also consider the case of a constant presheaf at a perfect field, following the case of single perfect field in \cite[Theorem E]{G} as mentioned above. The precise analogue of Theorem B in this case requires some preparatory
work and we will not attempt to summarise it here. It will be discussed in detail in section \ref{ThmB}. \\

\parindent=0in

\textit{Organization of the paper.} In section \ref{background}, we review some categorical properties of the presheaf categories of coalgebras $\Coalg_{\mathcal{R}}$ focusing in particular on the property of local presentability that is crucial to the proof of Theorem A. In section \ref{recollect}, we recall briefly (some of) the various known model categories of simplicial presheaves and simplicial $\mathcal{R}$-modules. In section \ref{TheoremA}, we prove Theorem A. In section \ref{Qadjunctions}, we compare the model category of simplicial $\mathcal{R}$-coalgebras with the model categories of simplicial presheaves and simplicial $\mathcal{R}$-modules. The comparison with simplicial presheaves is based on the adjunction \eqref{Q-adjunction1} as discussed above. The comparison with simplicial $\mathcal{R}$-modules is based on the forgetful functor $\Phi: \sCoalg_{\mathcal{R}} \to \sMod_{\mathcal{R}}$. In section \ref{ThmB}, we prove Theorem B and discuss some generalizations of it to other types of presheaves 
of fields. \\

\parindent=0in

\textit{Acknowledgements.} I would like to thank Manfred Stelzer for the discussions about simplicial coalgebras and his interest in the results of this paper.

\parindent=0.2in

\section{Preliminaries on presheaf categories of Coalgebras} \label{background}

Let $\mathcal{C}$ be a small category and $\PSh:= \mathrm{Fun}(\mathcal{C}^{op}, \mathrm{Set})$ denote the category of set-valued presheaves on $\mathcal{C}$. For every object $U$ of $\mathcal{C}$, there is a presheaf $Y(U): \mathcal{C}^{op} \to \mathrm{Set}$, $V \mapsto \mathcal{C}(V,U)$, called the representable presheaf by $U$. The set of presheaves $\{Y(U) : U \in \Ob \mathcal{C} \}$ defines a strong generator of finitely presentable objects in $\PSh$. This is essentially a consequence of the Yoneda lemma which says that $Y: \mathcal{C} \to \PSh$, $U \mapsto Y(U)$, is fully faithful.

Let $\mathcal{R}$ be a presheaf of commutative unital rings on $\mathcal{C}$. An $\mathcal{R}$-module $M$ is a presheaf of abelian groups on $\mathcal{C}$ such that $M(U)$ is an $\mathcal{R}(U)$-module for every $U \in \Ob \mathcal{C}$ and the restriction map $$M(j): M(U) \to M(V)$$ is a homomorphism of $\mathcal{R}(U)$-modules for every morphism $j: V \to U$ in $\mathcal{C}$. A morphism $f: M \to N$ of $\mathcal{R}$-modules is a natural transformation of the underlying presheaves such that $f|_U: M(U) \to N(U)$ is an $\mathcal{R}(U)$-module homomorphism. This defines a category $\Mod_{\mathcal{R}}$ of $\mathcal{R}$-modules which is abelian and locally ($\aleph_0$-) presentable. We recall that a category is ($\lambda$-)locally presentable if it is cocomplete and has a strong generator of ($\lambda$-)presentable objects \footnote{This is equivalent to the definition of \cite[Definition 1.17]{AR} by \cite[Theorem 1.20]{AR}.}. The forgetful functor $\iota: \Mod_{\mathcal{R}} \to \PSh$ admits a left adjoint $$\RR : \PSh \to \Mod_{\mathcal{R}}$$ that associates to every presheaf $X: \mathcal{C}^{op} \to \mathrm{Set}$, the $\mathcal{R}$-module whose value at $U \in \Ob \mathcal{C}$ is the free $\mathcal{R}(U)$-module with generators $X(U)$. A convenient strong generator for $\Mod_{\mathcal{R}}$ is given by the set of finitely presentable objects $\{\mathcal{R}\{Y(U)\} : U \in \Ob \mathcal{C}\}$.

The category $\Mod_{\mathcal{R}}$ has a closed symmetric monoidal pairing $\otimes : \Mod_{\mathcal{R}} \times \Mod_{\mathcal{R}} \to \Mod_{\mathcal{R}}$ that is given by the sectionwise tensor product of modules. More precisely, this takes a pair $(M,N)$ of $\mathcal{R}$-modules to the $\mathcal{R}$-module $M \otimes N$ whose value at $U \in \Ob \mathcal{C}$ is the $\mathcal{R}(U)$-module $M(U) \otimes_{\mathcal{R}(U)} N(U)$. For every $j: V \to U$ in $\mathcal{C}$, the restriction map is given by the composition of $\mathcal{R}(U)$-module homomorphisms $$M(U) \otimes_{\mathcal{R}(U)} N(U) \to M(V) \otimes_{\mathcal{R}(U)} N(V) \to M(V) \otimes_{\mathcal{R}(V)} N(V).$$
The unit of this monoidal pairing is given by $\mathcal{R}$ as an $\mathcal{R}$-module. 

As for every symmetric monoidal category, there is an associated category of cocommutative, coassociative, counital comonoids with respect to the monoidal pairing. Applied to $(\Mod_{\mathcal{R}}, \otimes, \mathcal{R})$, this defines the category $\Coalg_{\mathcal{R}}$ of cocommutative, coassociative, counital $\mathcal{R}$-coalgebras. More explicitly, an $\mathcal{R}$-coalgebra $(A, \mu, \epsilon)$ is an $\mathcal{R}$-module $A$ together with morphisms of $\mathcal{R}$-modules
for comultiplication 
\begin{displaymath}
 \mu: A \to A \otimes A
\end{displaymath}
and counit 
\begin{displaymath}
 \epsilon : A \to \mathcal{R}
\end{displaymath}
such that the following diagrams commute 
\begin{displaymath}
\xymatrix{
& A \otimes A \ar[dd]^{\stackrel{tw}{\cong}} \\
A \ar[ur]^{\mu} \ar[dr]^{\mu} &  \\
& A \otimes A
}
\end{displaymath}
\begin{displaymath}
\xymatrix{
& A \otimes A \ar[r]^<(0.25){\mu \otimes 1}  & (A \otimes A) \otimes A \ar[dd]^{\cong}\\
A \ar[dr]^{\mu} \ar[ur]^{\mu} & & \\
& A \otimes A \ar[r]^<(0.25){1 \otimes \mu} & A \otimes (A \otimes A)
}
\end{displaymath}
\begin{displaymath}
 \xymatrix{
A \otimes A \ar[d]^{\epsilon \otimes 1} & A \ar[d]^{1} \ar[r]^<(0.3){\mu} \ar[l]_<(0.3){\mu}  & A \otimes A \ar[d]^{1 \otimes \epsilon}\\
\mathcal{R} \otimes A & A \ar[r]^<(0.3){\cong} \ar[l]_<(0.3){\cong} & A \otimes \mathcal{R} 
}
\end{displaymath}

The isomorphisms in these diagrams are the canonical ones, given as part of the symmetric monoidal structure. A morphism of $\mathcal{R}$-coalgebras from $(A, \mu, \epsilon)$ to $(A', \mu', \epsilon')$ is a morphism $f: A \to A'$ between the underlying $\mathcal{R}$-modules that makes the following diagrams commute 
\begin{displaymath}
\xymatrix{
A \otimes A \ar[r]^{f \otimes f} & A' \otimes A' \\
A \ar[u]^{\mu} \ar[r]^f & A' \ar[u]_{\mu'}
}
\end{displaymath}
\begin{displaymath}
\xymatrix{
A \ar[r]^f \ar[d]_{\epsilon} & A' \ar[dl]^{\epsilon'} \\
\mathcal{R} &
}
\end{displaymath}

\parindent=0in

The restriction maps of an $\mathcal{R}$-coalgebra are not quite maps of coalgebras over a ring, since the coalgebraic structure is defined sectionwise with respect to different rings in general. However, note that given a ring homomorphism $f: R \to S$ and an $R$-coalgebra $(A, \mu, \epsilon)$, the $S$-module $A \otimes_R S$ is naturally an $S$-coalgebra with comultiplication defined by
$$ A \otimes_R S \stackrel{\mu \otimes 1}{\longrightarrow} (A \otimes_R A) \otimes_R S \stackrel{\cong}{\to} (A \otimes_R S ) \otimes_S (A \otimes_R S).$$ 
Given an $R$-coalgebra $A$ and an $S$-coalgebra $B$, an $R$-module homomorphism $g: A \to B$ is called a map of coalgebras if the induced $S$-module homomorphism $\tilde{g}: A \otimes_R S \to B$ is a map of $S$-coalgebras. The restriction maps of an $\mathcal{R}$-coalgebra are maps of coalgebras in this sense. 

\parindent=0.2in 

Various important properties of categories of coalgebras have been studied by Barr \cite{Ba} (for a single commutative ring), Fox \cite{F} (for an arbitrary locally presentable monoidal category) and, more recently, 
by Porst \cite{Po1}, \cite{Po2}. We recall some of them.

\begin{proposition} \label{cofreedom}
The forgetful functor $\Phi: \Coalg_{\mathcal{R}} \to \Mod_{\mathcal{R}}$ has a right adjoint. Moreover, $\Coalg_{\mathcal{R}}$ is comonadic over $\Mod_{\mathcal{R}}$.
\end{proposition}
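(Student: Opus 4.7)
The plan is to prove the two assertions in turn, leveraging that $\Coalg_{\mathcal{R}}$ is locally presentable (as discussed below, following Barr, Fox, and Porst) and that $\otimes$ is cocontinuous in each variable on $\Mod_{\mathcal{R}}$.

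\textit{Right adjoint.} First I would check that $\Phi$ preserves (indeed creates) all small colimits. Given a small diagram $(A_i)_{i \in I}$ in $\Coalg_{\mathcal{R}}$ with colimit $C := \mathrm{colim}_i \Phi A_i$ in $\Mod_{\mathcal{R}}$, the compatible family of composites $A_i \xrightarrow{\mu_i} A_i \otimes A_i \to C \otimes C$ induces a unique $\mu_C \colon C \to C \otimes C$; coupled with the analogously induced counit, this makes $C$ an object of $\Coalg_{\mathcal{R}}$ representing the colimit there. Since $\Coalg_{\mathcal{R}}$ is locally presentable and $\Phi$ is cocontinuous, the adjoint functor theorem for locally presentable categories produces the right adjoint.

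\textit{Comonadicity.} For the second claim I would apply the dual of Beck's monadicity theorem to the left adjoint $\Phi$, which requires three conditions: (i) the existence of a right adjoint, established above; (ii) that $\Phi$ reflects isomorphisms; and (iii) that $\Coalg_{\mathcal{R}}$ has, and $\Phi$ preserves, equalizers of $\Phi$-split pairs. Condition (ii) follows because the $\mathcal{R}$-module inverse of a coalgebra morphism automatically commutes with comultiplication and counit. For (iii), given $f, g \colon A \rightrightarrows B$ in $\Coalg_{\mathcal{R}}$ whose images in $\Mod_{\mathcal{R}}$ fit into a split equalizer $e \colon E \to \Phi A$, the absoluteness of split equalizers makes $e \otimes e \colon E \otimes E \to \Phi A \otimes \Phi A$ the equalizer of $\Phi f \otimes \Phi f$ and $\Phi g \otimes \Phi g$. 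The identity
\[
(f \otimes f) \circ \mu_A \circ e \;=\; \mu_B \circ f \circ e \;=\; \mu_B \circ g \circ e \;=\; (g \otimes g) \circ \mu_A \circ e
\]
then supplies a unique $\bar{\mu} \colon E \to E \otimes E$ factoring $\mu_A \circ e$ through $e \otimes e$; an analogous (simpler) argument yields a counit $\bar{\epsilon} \colon E \to \mathcal{R}$. A routine diagram chase then verifies that $(E, \bar{\mu}, \bar{\epsilon})$ is an $\mathcal{R}$-coalgebra and that $e$ is the equalizer of $f, g$ in $\Coalg_{\mathcal{R}}$, which is preserved by $\Phi$ by construction.

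The delicate point is (iii): the factorization of $\mu_A \circ e$ through $e \otimes e$ depends crucially on $e \otimes e$ remaining an equalizer, which holds precisely because splitness is absolute and therefore preserved by $- \otimes -$. For a generic equalizer in $\Mod_{\mathcal{R}}$ this step would fail, which is exactly why Beck's hypothesis is phrased in terms of $\Phi$-split pairs.
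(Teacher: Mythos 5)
Your argument is correct and is essentially the proof that the paper delegates to Barr, Fox and Porst: the right adjoint comes from cocontinuity (indeed creation of colimits) of $\Phi$ together with an adjoint functor theorem, and comonadicity from the dual of Beck's theorem, where the key point is exactly the one you isolate, namely that split equalizers are absolute and hence preserved by $(-)\otimes(-)$. The only presentational caveat is that you invoke local presentability of $\Coalg_{\mathcal{R}}$, which the paper establishes only afterwards in Theorem \ref{BarrFox}; there is no circularity (that proof does not use the cofree functor), but you should either cite it explicitly or substitute the special adjoint functor theorem, which is closer to Barr's original route.
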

\begin{proof}
This is proved in \cite[Theorem 4.1]{Ba} for the case of a single commutative ring. The same proof applies here, see \cite[Corollary 8]{F} and \cite[Theorem 12, Remark 15]{Po1}.
\end{proof}

The main property that we need in this paper is stated in the following theorem that is essentially due to Barr \cite{Ba} and Fox \cite{F}. First let $\lambda$ be a regular cardinal that is bigger than $\aleph_0$ and the cardinalities of the rings $\mathcal{R}(U)$ for all $U \in \Ob \mathcal{C}$.

\begin{theorem} \emph{(Barr \cite{Ba}-Fox \cite{F})} \label{BarrFox}
$\Coalg_{\mathcal{R}}$ is a locally $\lambda$-presentable category. 
\end{theorem}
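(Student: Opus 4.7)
The plan is to follow the classical strategy of Barr and Fox: establish cocompleteness and then exhibit a strong generator of $\lambda$-presentable coalgebras by showing that every $\mathcal{R}$-coalgebra is the $\lambda$-filtered union of its $\lambda$-presentable subcoalgebras. Cocompleteness is almost immediate from Proposition~\ref{cofreedom}: since $\Phi$ is a left adjoint it preserves colimits, and given a diagram $D \to \Coalg_{\mathcal{R}}$ one first forms the colimit $A := \mathrm{colim}\, \Phi(D)$ in $\Mod_{\mathcal{R}}$ and then uses that the sectionwise tensor product preserves colimits in each variable (being a left adjoint in a closed symmetric monoidal structure) to descend the comultiplication and counit to $A$, upgrading $A$ to the colimit in $\Coalg_{\mathcal{R}}$.

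The heart of the proof is a ``small subcoalgebra'' construction that substitutes for the fundamental theorem of coalgebras over a field. Given an $\mathcal{R}$-coalgebra $(A,\mu,\epsilon)$ and a $\lambda$-presentable sub-$\mathcal{R}$-module $M \subseteq A$, I would inductively construct a chain $M = M_0 \subseteq M_1 \subseteq \cdots$ of $\lambda$-presentable sub-$\mathcal{R}$-modules of $A$ with $\mu(M_n) \subseteq M_{n+1}\otimes M_{n+1}$ inside $A \otimes A$. To pass from $M_n$ to $M_{n+1}$ one works sectionwise: for each $U \in \Ob \mathcal{C}$ and each $x \in M_n(U)$, the element $\mu_U(x) \in A(U)\otimes_{\mathcal{R}(U)} A(U)$ is a finite sum of simple tensors, so one adjoins to $M_n(U)$ the finitely many factors of $A(U)$ appearing in these representatives, and then closes under the $\mathcal{R}(U)$-action, the restriction maps, and analogous choices at the remaining objects of $\mathcal{C}$. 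The hypotheses $\lambda > \aleph_0$ and $\lambda > |\mathcal{R}(U)|$ for all $U$ ensure that each closure step keeps us strictly within $\lambda$-presentable objects. Setting $B := \bigcup_n M_n$, regularity of $\lambda$ forces $B$ to be $\lambda$-presentable, and by construction $\mu(B) \subseteq B \otimes B$ and $\epsilon(B) \subseteq \mathcal{R}$, so $B$ is a subcoalgebra of $A$ containing $M$.

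Once the construction is in hand, every $\mathcal{R}$-coalgebra $A$ is exhibited as the $\lambda$-filtered union of its $\lambda$-presentable subcoalgebras (apply the construction to each $\lambda$-presentable submodule of $\Phi(A)$), so a representative set of such subcoalgebras forms a strong generator of $\lambda$-presentable objects of $\Coalg_{\mathcal{R}}$. Combined with cocompleteness, this yields local $\lambda$-presentability. The main technical obstacle, and the reason $\lambda = \aleph_0$ is insufficient in general, is precisely the preservation of $\lambda$-presentability through the iterated closure: over a field one can work with finite-dimensional subcoalgebras because exactness of $\otimes$ and the equivalence of ``finitely generated'' and ``finitely presentable'' shrink the bookkeeping to nothing, whereas over a presheaf of commutative rings the contributions of the sectionwise ring actions and of the restriction maps must be absorbed into $\lambda$, forcing the stated lower bound on the cardinal.
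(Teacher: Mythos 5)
Your overall strategy---cocompleteness plus a set of ``small'' subcoalgebras forming a strong generator of $\lambda$-presentable objects---is the same as the paper's (and Barr's), but the crucial step, which the paper delegates to \cite[Theorem 3.1]{Ba} as a black box and which you attempt to reprove, has a genuine gap. Your closure process produces a submodule $B=\bigcup_n M_n$ of $\Phi(A)$ with $\mu(B)$ contained in the \emph{image} of the canonical map $B\otimes B\to A\otimes A$. Over a general (presheaf of) commutative ring(s) this map is not injective, because $\otimes$ is not left exact, so this containment does not produce a comultiplication $B\to B\otimes B$: there is no canonical lift of $\mu|_B$ through $B\otimes B\to A\otimes A$, a chosen lift need not be unique, and even if one exists there is no reason for it to be coassociative, cocommutative, or compatible across the sections and restriction maps. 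This is exactly the point where the field case is easy (every submodule inclusion stays injective after tensoring) and the general case is not; Barr's actual argument interleaves your ``adjoin the tensor factors of representatives of $\mu(x)$'' step with a purification step, so that the resulting submodule is pure in $A$, the map $B\otimes B\to A\otimes A$ is injective, and the induced structure maps are then automatic. The cardinal bound $\max\{\mathrm{card}(\mathcal{R}(U)),\aleph_0\}$ is needed to control the size of these pure closures, not merely the bookkeeping of ring actions and restriction maps as you suggest. Without the purity step your $B$ need not be a subcoalgebra at all, and the whole generator does not exist.

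Two smaller points. First, even granting the construction, you pass silently from ``$\Phi(B)$ is $\lambda$-presentable in $\Mod_{\mathcal{R}}$'' to ``$B$ is $\lambda$-presentable in $\Coalg_{\mathcal{R}}$''; this requires an argument, e.g.\ Fox's observation that $\Coalg_{\mathcal{R}}(B,C)$ is the equalizer of a pair of maps between hom-sets of underlying modules and that equalizers commute with $\lambda$-directed colimits in $\mathrm{Set}$. Second, the paper reduces the general site to the discrete case via the adjunction $u_!\dashv u^*$ rather than closing under restriction maps by hand; your sectionwise approach could be made to work, but it compounds the purity problem, since purity must now be arranged compatibly over all sections. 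Your cocompleteness argument is fine (it is the standard consequence of $\Phi$ being comonadic, hence creating colimits).
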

\begin{proof}
This is essentially contained in the last remarks of \cite{F} combined with the arguments of \cite[Theorem 3.1, Corollary 3.2]{Ba} to get the required rank of presentability. 

Let us first prove the theorem in the case where $\mathcal{C}$ is a discrete category, i.e., when there are no non-identity morphisms. Consider the set $\mathcal{A}$ of $\mathcal{R}$-coalgebras $A$ whose underlying $\mathcal{R}$-module $\Phi(A)$ satisfies the following properties
\begin{itemize}
\item [(i)] $\Phi(A)$ is the trivial zero module everywhere except for exactly one object $U \in \Ob \mathcal{C}$,
\item[(ii)] the cardinality of $\Phi(A)(U)$ is $\leq$ max$\{\mathrm{card}(\mathcal{R}(U)), \aleph_0\}$. 
\end{itemize}
By \cite[Theorem 3.1]{Ba}, given an $\mathcal{R}$-coalgebra $A$, every element $x \in A(U)$, $U \in \Ob \mathcal{C}$, is contained in an $\mathcal{R}$-subcoalgebra that satisfies (i) and (ii). Similarly to \cite[Corollary 3.2]{Ba}, it follows that $\mathcal{A}$ is a strong generator of $\Coalg_{\mathcal{R}}$. Let $A \in \mathcal{A}$ and $U \in \Ob\mathcal{C}$ as in (i) above. By (ii), $\Phi(A)(U)$ is generated by strictly less than $\lambda$ elements, so it is $\lambda$-presentable as an $\mathcal{R}(U)$-module. By (i), $\Phi(A)$ is also $\lambda$-presentable (in $\Mod_{\mathcal{R}}$). Following the last remarks of \cite{F}, it is immediate that $A$ is also $\lambda$-presentable (in $\Coalg_{\mathcal{R}}$). Indeed it suffices to note that the set of maps from $A$ to an $\mathcal{R}$-coalgebra $B$ can be expressed as the equalizer of a pair of arrows $$\Mod_{\mathcal{R}}(\Phi(A), \Phi(B)) \rightrightarrows \Mod_{\mathcal{R}}(\Phi(A), \Phi(B) \otimes \Phi(B)) \times \Mod_{\mathcal{R}}(\Phi(A), \mathcal{R})$$ and that equalizers commute with $\lambda$-directed colimits (for any $\lambda$) in the category of sets. Hence it follows that $\Coalg_{\mathcal{R}}$ is locally $\lambda$-presentable.

For the general case, let $\mathcal{C}_0$ denote the discrete category with set of objects $\Ob \mathcal{C}$ and $u: \mathcal{C}_0 \to \mathcal{C}$ be the inclusion functor. The presheaf $\mathcal{R}$ clearly restricts to a presheaf of commutative unital rings $\mathcal{R}_0:=u^*\mathcal{R}$ on $\mathcal{C}_0$. There is an adjunction $$u_!: \Coalg_{\mathcal{R}_0} \rightleftarrows \Coalg_{\mathcal{R}}: u^*$$ where the right adjoint is the obvious forgetful pullback functor. The left adjoint is defined by $$u_!(A)(V) =  \bigoplus_{V \to U} A(U) \otimes_{\mathcal{R}(U)} \mathcal{R}(V)$$ where the tensor products are endowed with the natural $\mathcal{R}(V)$-coalgebraic structure as remarked earlier. Moreover, $u^*$ is faithful and preserves ($\lambda$-)directed colimits. It follows that the set $u_!(\mathcal{A})=\{ u_!(A) : A \in \mathcal{A} \}$ ($\mathcal{A}$ as above) is a strong generator of $\lambda$-presentable objects and hence the result also follows. 
\end{proof}

\begin{remark}
The local presentability of $\Coalg_{\mathcal{R}}$ was also proved by Porst \cite{Po1} in the case of a single commutative ring using somewhat different methods based more heavily on general results about accessible categories from \cite{AR}. These methods apply identically to the case of a presheaf of commutative rings, but it is generally difficult to obtain an explicit rank of presentability by them alone and most probably it will be very big. One way to obtain it would be to go through the choices of cardinals in the proofs of \cite[Lemma 2.76, Theorem 2.72, Theorem 2.43, Theorems 2.32-2.34 and Theorem 2.19]{AR} in that order more or less. On the other hand, the rank shown in Theorem \ref{BarrFox} is not the least possible since the category of coalgebras over a field is known to be locally finitely presentable. It would be interesting to know what the best possible rank is exactly and how it is related to the divisibility properties of the elements of $\mathcal{R}$ or the failure of the tensor product to be a left exact functor.
\end{remark}

The tensor product of $\mathcal{R}$-coalgebras $(A, \mu,\epsilon)$ and $(B, \mu',\epsilon')$ is naturally an $\mathcal{R}$-coalgebra. The underlying $\mathcal{R}$-module is $A \otimes B$ and the coalgebraic structure is given by the following $\mathcal{R}$-module morphisms $$A \otimes B \stackrel{\mu \otimes \mu'}{\longrightarrow} (A \otimes A) \otimes (B \otimes B) \cong (A \otimes B) \otimes (A \otimes B)$$ 
$$ A \otimes B \stackrel{\epsilon \otimes \epsilon'}{\longrightarrow} \mathcal{R} \otimes \mathcal{R} \cong \mathcal{R}.$$
There are canonical morphisms
$$p_A: A \otimes B \stackrel{1 \otimes \epsilon'}{\longrightarrow} A \otimes \mathcal{R} \cong A$$
$$p_B: A \otimes B \stackrel{\epsilon \otimes 1}{\longrightarrow}  \mathcal{R} \otimes B \cong B$$
Moreover, the cone $(A \otimes B, p_A, p_B)$ actually defines a product cone for $A$ and $B$ in $\Coalg_{\mathcal{R}}$. Using this description of the products in $\Coalg_{\mathcal{R}}$, the following theorem is an immediate consequence of the special adjoint functor theorem and the fact that $\Phi$ creates colimits.
 
\begin{theorem} \label{cartesian}
$\Coalg_{\mathcal{R}}$ is a cartesian closed category.
\end{theorem}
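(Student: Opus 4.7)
The plan is to exhibit, for each fixed $A \in \Coalg_{\mathcal{R}}$, a right adjoint to the endofunctor $- \times A \colon \Coalg_{\mathcal{R}} \to \Coalg_{\mathcal{R}}$. By the explicit description of products recalled in the excerpt, this functor is nothing other than $- \otimes A$ (with the tensor coalgebra structure already described). Since all data needed to invoke the special adjoint functor theorem are in place, the strategy is to verify that $- \times A$ preserves small colimits and then appeal to the adjoint functor theorem.

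For the colimit-preservation step, I would argue as follows. By Proposition \ref{cofreedom}, $\Phi$ is comonadic over $\Mod_{\mathcal{R}}$; it is a standard fact that the forgetful functor from coalgebras over a comonad on a cocomplete category creates all small colimits. Thus a colimit in $\Coalg_{\mathcal{R}}$ can be computed on underlying $\mathcal{R}$-modules. On the level of underlying $\mathcal{R}$-modules, the functor $- \times A$ becomes $\Phi(-) \otimes \Phi(A)$, and tensoring with $\Phi(A)$ preserves colimits in $\Mod_{\mathcal{R}}$ because the monoidal structure on $\Mod_{\mathcal{R}}$ is closed (so $- \otimes \Phi(A)$ is a left adjoint). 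Combining these two observations, $- \times A$ preserves all small colimits in $\Coalg_{\mathcal{R}}$.

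For the adjoint step, Theorem \ref{BarrFox} tells us that $\Coalg_{\mathcal{R}}$ is locally $\lambda$-presentable; in particular it is cocomplete, well-copowered, and has a small generating set of presentable objects. Thus the hypotheses of the special adjoint functor theorem (in the form producing a right adjoint to a cocontinuous functor out of a cocomplete, co-well-powered category with a small generator) are satisfied, and $- \times A$ acquires a right adjoint $[A, -]$. The existence of these internal homs for every $A$ is exactly cartesian closure.

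The main obstacle is not conceptual but bookkeeping: one must check that the forgetful functor $\Phi$ really creates (not just preserves) the relevant colimits, so that the computation of $-\times A$ on underlying modules genuinely controls its cocontinuity on $\Coalg_{\mathcal{R}}$. Everything else is machinery already established earlier in the excerpt, and no additional structural result about coalgebras over $\mathcal{R}$ seems to be needed.
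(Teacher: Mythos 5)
Your proposal is correct and matches the paper's own argument: the paper likewise observes that the product in $\Coalg_{\mathcal{R}}$ is the tensor product, that $\Phi$ creates colimits (by comonadicity, Proposition \ref{cofreedom}), so $- \times A$ is cocontinuous, and then invokes the special adjoint functor theorem (following Barr's Theorem 5.3). The bookkeeping point you flag --- that a comonadic forgetful functor over a cocomplete category creates colimits --- is standard and is exactly what the paper relies on.
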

\begin{proof}
This is proved in \cite[Theorem 5.3]{Ba} for the case of a single commutative ring. The same proof applies here.
\end{proof}

The $\mathcal{R}$-module $\mathcal{R}\{X\}$ has a natural $\mathcal{R}$-coalgebra structure that is induced by the diagonal map $X \stackrel{\Delta}{\to} X \times X$ and the unique map to the terminal object $X \to \ast$. Hence the functor $\RR: \PSh \to \Mod_{\mathcal{R}}$ factors through $\Phi: \Coalg_{\mathcal{R}} \to \Mod_{\mathcal{R}}$. We denote the functor into $\Coalg_{\mathcal{R}}$ again by $\RR: \PSh \to \Coalg_{\mathcal{R}}$. This functor has a right adjoint $\rho: \Coalg_{\mathcal{R}} \to \PSh$ which is defined sectionwise as follows: given an $\mathcal{R}$-coalgebra $A$, then $$\rho(A)(U) = \Coalg_{\mathcal{R}(U)}(\mathcal{R}(U), A(U))$$ defines a presheaf $\rho(A): \mathcal{C}^{op} \to \mathrm{Set}$. The functor $\rho$ preserves $\kappa$-directed colimits, and so $\RR$ preserves $\kappa$-presentable objects, for every regular cardinal $\kappa > \mathrm{card}(Mor \mathcal{C})$. 

The functor $\rho$ is called the functor of $\mathcal{R}$-points because it picks out from every section the elements of the $\mathcal{R}$-coalgebra that co-multiply ``diagonally''. If $\mathcal{R}(U)$ has no non-trivial idempotents for all $U \in \Ob \mathcal{C}$, the set of elements of $\mathcal{R}\{X\}(U)$ that have this property can be identified with $X(U)$.  In this case, the unit transformation of the adjunction $$1 \to \rho \RR$$ is a natural isomorphism.

Note also that since adjoints are essentially unique, the forgetful functor $\iota: \Mod_{\mathcal{R}} \to \PSh$ factors as the composite of the cofree $\mathcal{R}$-coalgebra functor, denoted by $\mathbb{T}: \Mod_{\mathcal{R}} \to \Coalg_{\mathcal{R}}$, followed by the functor of $\mathcal{R}$-points $\rho: \Coalg_{\mathcal{R}} \to \PSh$.

\section{A quick review of the model structures on simplicial presheaves} \label{recollect}

Let $\mathcal{C}$ be a small Grothendieck site and $\sPSh$ denote the category of simplicial presheaves (of sets) on $\mathcal{C}$. The objects are usually understood as diagrams $F: \mathcal{C}^{op} \to \sSet$ and the morphisms are natural transformations of such diagrams. 

Various model category structures on the categories of simplicial presheaves and simplicial $\mathcal{R}$-modules (or presheaves of chain complexes) are known in the literature. We review some facts about the four model category structures that are characterized by the following two specifications: (i) whether the weak equivalences are defined sectionwise or stalkwise, and (ii) whether the cofibrations are defined sectionwise or they are the so-called projective cofibrations.

First we recall the definition of the local weak equivalences in $\sPSh$ from \cite{Ja2}. The definition uses Boolean localization in order to include the case where the associated topos of sheaves does not have enough points. A different definition using sheaves of homotopy groups can be found also in \cite{Ja2}. Let $L^2: \PSh \to \Sh$ denote the sheafification functor. According to a fundamental theorem in topos theory, due to Barr, there is a complete Boolean algebra $\mathcal{B}$ and a surjective geometric morphism $\wp: \mathrm{Sh}(\mathcal{B}) \rightarrow \Sh$ (see \cite[IX.9]{MM} for details). Thus we obtain a geometric morphism between the categories of simplicial objects, $$\wp^*L^2: \sPSh \to \mathrm{sSh}(\mathcal{B}).$$ A map $f: X \to Y$ in $\sPSh$ is a local weak equivalence if (and only if) the map $$(\wp^* L^2 Ex^{\infty}(f)) (b) : (\wp^* L^2 Ex^{\infty} (X))(b) \to (\wp^* L^2Ex^{\infty}(Y))(b)$$ is a weak equivalence of simplicial sets for all $b \in \mathcal{B}$. Here $Ex^{\infty}$ denotes Kan's fibrant replacement functor applied sectionwise. It can be shown that this definition does not depend on the choice of Boolean localization \cite{Ja2}. In particular, a collection of enough points of $\mathrm{Sh}(\mathcal{C})$ (if it exists) defines a Boolean localization, and so a local weak equivalence is a natural transformation that induces a weak equivalence of simplicial sets at every point in this collection. A sectionwise weak equivalence is always a local weak equivalence \cite[Lemma 9]{Ja2}. If $\mathcal{C}$ has the trivial topology, a local weak equivalence is exactly the same as a sectionwise weak equivalence. 

Note that we can dispense with the fibrant replacement functor $Ex^{\infty}$ when the presheaves are already sectionwise fibrant. Thus a morphism $f: M \to N$ between simplicial $\mathcal{R}$-modules is local weak equivalence (of the underlying simplicial presheaves) if (and only if) $\wp^* L^2 (f)(b)$ is a weak equivalence for all $b \in \mathcal{B}$, since a simplicial presheaf of abelian groups is already sectionwise fibrant.

The \textit{projective} model category structure on $\sPSh$ is the standard projective model category structure on a category of diagrams in a cofibrantly generated model category (see \cite{Hi} for details). The weak equivalences (resp. fibrations) are the sectionwise weak equivalences (resp. Kan fibrations) of simplicial sets. In particular, the class of weak equivalences, and so the associated homotopy theory, is independent of the choice of the Grothendieck topology on $\mathcal{C}$. The cofibrations of this model category are characterised by a right lifting property and they will be referred to as the \textit{projective} cofibrations. Furthermore, the projective model category is proper, simplicial and cofibrantly generated. The simplicial structure is defined by the following functorially defined objects,
\begin{eqnarray*}
(K \otimes X)(U) : = K \times X(U) \\
X^K(U) := \Map_{\sSet}(K, X(U)) \\
\Map(X,Y): = \sPSh(\Delta^{\cdot} \otimes X, Y)
\end{eqnarray*}
for every simplicial set $K$ and simplicial presheaves $X$ and $Y$. Sets of generating cofibrations and trivial cofibrations are defined as follows. Let $\mathcal{C}_0$ denote the discrete category with set of objects $\Ob \mathcal{C}$ and $u: \mathcal{C}_0 \to \mathcal{C}$ be the inclusion functor. There is an adjunction $$u_!: \mathrm{sPSh(\mathcal{C}_0)} \rightleftarrows \sPSh: u^*$$ where $u^*$ is the obvious pullback functor. Note that since $u^*$ preserves $\kappa$-directed colimits (in fact, all colimits), the left adjoint $u_!$ preserves $\kappa$-presentable objects (for any $\kappa$). The category $\mathrm{sPSh(\mathcal{C}_0)}$ can be endowed with the product model category structure where all classes of weak equivalences, cofibrations and fibrations are defined pointwise. The projective model category structure on $\sPSh$ is the lifting of this product model category (which is, incidentally, also an example of a projective model category) along the adjunction $u_! \dashv u^*$. A detailed account of the method of transferring a cofibrantly generated model category structure along an adjunction can be found in \cite[Theorem 11.3.2]{Hi}. As generating sets for cofibrations and trivial cofibrations of $\mathrm{sPSh(\mathcal{C}_0)}$ we can choose the ``products'' of some generating sets of the model category 
$\sSet$. Let $I = \{ \partial \Delta^n \hookrightarrow \Delta^n : n \geq 0 \}$ and $J: = \{ \Lambda^n_k \stackrel{\sim}{\hookrightarrow} \Delta^n : 0 \leq k \leq n, n \geq 0 \}$ denote the standard generating sets of $\sSet$. For every $U \in \Ob \mathcal{C}_0$ and simplicial set $K$, let $X(U,K)$ be the presheaf on $\mathcal{C}_0$ which takes the value $K$ at $U$ and $\varnothing$ elsewhere. For every $f: K \to L$ in $\sSet$, there is a natural morphism of presheaves $X(U,f): X(U,K) \to X(U,L)$. The sets $\mathcal{I}_{\mathcal{C}_0} := \{ X(U,f) : U \in \Ob \mathcal{C}_0, f \in I \}$ and  $\mathcal{J}_{\mathcal{C}_0} := \{ X(U,f) : U \in \Ob \mathcal{C}_0, f \in J \}$ are generating sets for cofibrations and trivial cofibrations respectively. Consequently, the sets of morphisms $$\mathcal{I}_{\mathcal{C}} : = u_! ( \mathcal{I}_{\mathcal{C}_0}) $$
$$\mathcal{J}_{\mathcal{C}} := u_! ( \mathcal{J}_{\mathcal{C}_0})$$
are generating sets of cofibrations and trivial cofibrations, respectively, for the projective model category $\sPSh$. Moreover, they consist of finitely presentable objects in $\sPSh^{\to}$.

The \textit{local projective} model category structure has the same cofibrations, i.e., the projective cofibrations, and the weak equivalences are the local weak equivalences. This model category was shown by Blander \cite{Bla}. The notation $\sPSh_{\mathrm{proj}}$ will be used to denote it. It is clearly a left Bousfield localization of the projective model category, so the trivial fibrations are the same in both cases. The local projective model category is again proper, simplicial and cofibrantly generated. Note that the projective model category can be obtained as a special case of the local projective one by endowing the category $\mathcal{C}$ with the trivial topology.

The category $\sMod_{\mathcal{R}}$ inherits a model category structure from $\sPSh_{\mathrm{proj}}$ along the adjunction (cf. \cite[Lemma 2.2]{Ja3}) 
\begin{equation*} \label{mod-proj}
\RR: \sPSh_{\mathrm{proj}} \rightleftarrows \sMod_{\mathcal{R}}: \iota.
\end{equation*}
This can be shown easily using again the standard method of transferring a model category structure along an adjoint pair (see also Remark \ref{proj}). A map $f: M \to N$ in $\sMod_{\mathcal{R}}$ is a weak equivalence (resp. fibration) if $\iota(f)$ is so in $\sPSh_{\mathrm{proj}}$. This local projective model category will be denoted by $\sMod^{\mathrm{proj}}_{\mathcal{R}}$. Note that the trivial fibrations in $\sMod^{\mathrm{proj}}_{\mathcal{R}}$ are exactly the morphisms that define a trivial fibration of the associated presheaves of simplicial sets, so they are the maps that are sectionwise a weak equivalence and a Kan fibration. But a map of simplicial abelian groups is a trivial fibration if and only if it is a weak equivalence and an epimorphism. Thus we obtain the following proposition that will be needed in the proof of Theorem A. 

\begin{proposition} \label{trivialfib}
A map $f: M \to N$ in $\sMod^{\mathrm{proj}}_{\mathcal{R}}$ is a trivial fibration iff it is a sectionwise weak equivalence and an epimorphism. 
\end{proposition}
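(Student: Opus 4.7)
The plan is to unfold the definition of the transferred model structure on $\sMod^{\mathrm{proj}}_{\mathcal{R}}$ and then reduce sectionwise to the well-known characterization of trivial Kan fibrations between simplicial abelian groups. By construction, a map $f: M \to N$ is a trivial fibration in $\sMod^{\mathrm{proj}}_{\mathcal{R}}$ if and only if $\iota(f)$ is a trivial fibration in $\sPSh_{\mathrm{proj}}$. Since the local projective model structure is a left Bousfield localization of the projective one, the trivial fibrations coincide, so $\iota(f)$ is a trivial fibration in $\sPSh_{\mathrm{proj}}$ precisely when $\iota(f)(U): M(U) \to N(U)$ is a trivial Kan fibration of simplicial sets for every $U \in \Ob\mathcal{C}$.

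For the forward direction, suppose $f$ is a trivial fibration. Then each $f(U)$ is a trivial Kan fibration of simplicial sets, hence in particular a sectionwise weak equivalence. Moreover, every trivial Kan fibration has the right lifting property against $\partial \Delta^0 \hookrightarrow \Delta^0$ and is therefore surjective on $0$-simplices; combined with surjectivity on higher simplices (by lifting against $\partial\Delta^n \hookrightarrow \Delta^n$), this gives that $f(U)$ is surjective for every $U$, so $f$ is an epimorphism in $\sMod_{\mathcal{R}}$.

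For the converse, assume that $f$ is both a sectionwise weak equivalence and an epimorphism. The key classical input is that a surjective morphism of simplicial abelian groups is automatically a Kan fibration: the short exact sequence $0 \to \ker f(U) \to M(U) \to N(U) \to 0$ of simplicial $\mathcal{R}(U)$-modules may be treated as an exact sequence of simplicial abelian groups, and the Moore complex / normalization argument (equivalently, the fact that $M(U) \to N(U)$ splits set-theoretically in each degree because $N(U)$ is degreewise a free $\mathbb{Z}$-module target of a surjection, and any surjection of simplicial groups is a Kan fibration) produces the required horn fillers. Therefore each $f(U)$ is both a weak equivalence and a Kan fibration, i.e., a trivial Kan fibration, and so $\iota(f)$ is sectionwise a trivial Kan fibration. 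By the initial reduction this is equivalent to $f$ being a trivial fibration in $\sMod^{\mathrm{proj}}_{\mathcal{R}}$.

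There is really no obstacle here: the entire content of the proposition is the translation between ``trivial fibration'' and ``weak equivalence $+$ epimorphism'' for simplicial modules, which is a standard fact about simplicial abelian groups applied sectionwise. The only thing one has to be a little careful about is the observation that the transfer reduces the question precisely to a sectionwise statement about simplicial sets, which then upgrades automatically because the sections carry the extra structure of simplicial modules.
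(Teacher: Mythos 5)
Your argument is correct and is essentially the paper's own: the paper likewise observes that the trivial fibrations of the transferred structure are exactly the sectionwise trivial Kan fibrations of the underlying simplicial sets and then invokes the standard fact that a map of simplicial abelian groups is a trivial Kan fibration iff it is a weak equivalence and an epimorphism. The only blemish is the parenthetical in your converse --- $N(U)$ need not be degreewise free over $\mathbb{Z}$ (take $\mathcal{R}(U)=\mathbb{Z}/2$) --- but this is immaterial, since the other clause you cite, that every degreewise surjection of simplicial groups is a Kan fibration (Moore), already does the job.
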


The model category $\sMod^{\mathrm{proj}}_{\mathcal{R}}$ is proper, simplicial and cofibrantly generated. The simplicial structure on $\sMod_{\mathcal{R}}$ is induced by the simplicial structure of $\sPSh$. More explicitly, given a simplicial set $K$ and simplicial $\mathcal{R}$-modules $M$ and $N$, the simplicial structure is defined by the following objects,
\begin{eqnarray*}
K \otimes M : = \mathcal{R}\{\underline{K}\} \otimes M  \\
M^K(U): = \Map_{\sSet}(K, M(U)) \\
\Map(M,N): = \sMod_{\mathcal{R}}(\Delta^{\cdot} \otimes M, N)
\end{eqnarray*}
where $\underline{K}$ denotes the constant presheaf at the simplicial set $K$. The sets of morphisms $$\mathcal{I}^{\mathrm{proj}}_{\mathcal{R}} : = \mathcal{R} \{ \mathcal{I}_{\mathcal{C}} \}$$ $$\mathcal{J}^{\mathrm{proj}}_{\mathcal{R}} : = \mathcal{R} \{ \mathcal{J}_{\mathcal{C}} \}$$
are generating sets of cofibrations and trivial cofibrations respectively. They also consist of finitely 
presentable objects in $\sMod^{\to}_{\mathcal{R}}$.

There is also an \textit{injective} model category structure on $\sPSh$ due to Heller \cite{He}. The cofibrations and weak equivalences are the sectionwise monomorphisms and sectionwise weak equivalences respectively. This is again independent of the Grothendieck topology on $\mathcal{C}$. In fact, this model category structure is an instance of the more general injective model category structure on a category of diagrams in a combinatorial model category \cite[Proposition A.2.8.2]{Lu}. It is known to be cofibrantly generated, simplicial and proper.

The associated local homotopy theory corresponds to the \textit{local injective} model category structure on $\sPSh$ due to Jardine \cite{Ja1}, \cite{Ja2}. The cofibrations are the monomorphisms and the weak equivalences and the local weak equivalences. As a consequence, it is a left Bousfield localization of the injective model category at the local weak equivalences. It is also cofibrantly generated, simplicial and proper. This model category will be denoted here by $\sPSh_{\mathrm{inj}}$. Again the injective model category is an instance of the local injective one if the category $\mathcal{C}$ is endowed with the trivial topology.

The category of simplicial $\mathcal{R}$-modules inherits also a ``local injective'' model category structure from $\sPSh_{\mathrm{inj}}$ using similar methods as before in the projective case. This is again cofibrantly generated, simplicial and proper \cite{Ja3}. It will be denoted here by $\sMod^{\mathrm{global}}_{\mathcal{R}}$ following Jardine's terminology of \textit{global fibrations}. This model category is different to the local injective model category $\sMod^{\mathrm{inj}}_{\mathcal{R}}$ that we discuss in section \ref{Qadjunctions}. They have the same class of weak equivalences, but the cofibrations of $\sMod^{\mathrm{inj}}_{\mathcal{R}}$ are exactly the monomorphisms. Moreover, $\sMod^{\mathrm{inj}}_{\mathcal{R}}$ is also cofibrantly generated, simplicial and proper (see Theorem \ref{modules}). Let us finally note the directions of the various left Quillen functors,
\begin{displaymath}
\xymatrix{
\sPSh_{\mathrm{proj}} \ar[r]^1 \ar[d]^{\RR} & \sPSh_{\mathrm{inj}} \ar[d]^{\RR} \\
\sMod^{\mathrm{proj}}_{\mathcal{R}} \ar[r]^1 & \sMod^{\mathrm{global}}_{\mathcal{R}} \ar[r]^1 & \sMod^{\mathrm{inj}}_{\mathcal{R}}
}
\end{displaymath}

\section{Proof of Theorem A} \label{TheoremA}

We will apply the following theorem about combinatorial model categories. 

\begin{theorem} \label{j.smith} Let $\mathcal{C}$ be a locally presentable category, $W$ a class of morphisms of $\mathcal{C}$ and $\mathrm{I}$ a set of morphisms. Then the classes of morphisms 
\begin{center}
         $W$, $\mathrm{Cof(I)}$ and $(\mathrm{Cof(I)} \cap W)-\inj$ 
\end{center}
define classes of weak equivalences, cofibrations and fibrations for a cofibrantly generated model category structure on $\mathcal{C}$ if and only if the following conditions are satisfied:
\begin{itemize}
\item[(i)]  $W$ satisfies the 2-out-of-3 property,
\item[(ii)] $\mathrm{I-inj} \subseteq W$,
\item[(iii)] $\mathrm{Cof(I)} \cap W$ is closed under transfinite compositions and pushouts,
\item[(iv)] the (full subcategory spanned by the) class $W$ is accessible and accessibly embedded in $\mathcal{C}^{\rightarrow }$.
\end{itemize}
\end{theorem}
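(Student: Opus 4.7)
The plan is to prove this by the standard route due to J.~H.~Smith, treating necessity as routine and sufficiency as the substantive direction. For necessity, in any cofibrantly generated model category the class $W$ satisfies 2-out-of-3, the trivial fibrations coincide with $I$-inj (so $I$-inj $\subseteq W$), and $\mathrm{Cof}(I) \cap W$ is the class of trivial cofibrations, which is closed under pushouts and transfinite compositions. The accessibility of $W$ inside $\mathcal{C}^{\to}$ is the only nontrivial point; I would appeal to the fact that, for a combinatorial model category, $W$ is the class of maps with the right lifting property against some set modulo an accessible closure, which makes $W$ itself accessible and accessibly embedded.

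For sufficiency, assume (i)--(iv) and construct the model structure directly. First, I would verify that $W$ is closed under retracts: by (iv), $W \hookrightarrow \mathcal{C}^{\to}$ preserves $\mu$-filtered colimits for some regular $\mu$, and a standard argument (writing a retract as a sequential colimit of ``zig-zag'' copies of the original map and using 2-out-of-3) shows closure under retracts. Next, apply the small object argument to $I$: every map factors as $j \circ i$ with $i \in \mathrm{Cof}(I)$ and $j \in I$-inj. Since $I$-inj $\subseteq W$ by (ii), and $(\mathrm{Cof}(I) \cap W)$-inj $\supseteq \mathrm{Cof}(I)$-inj $= I$-inj, the maps in $I$-inj are both weak equivalences and fibrations. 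To identify the trivial fibrations with $I$-inj, take any $p \in (\mathrm{Cof}(I) \cap W)$-inj that lies in $W$, factor $p = q \circ i$ with $i \in \mathrm{Cof}(I)$ and $q \in I$-inj; then $q \in W$, so 2-out-of-3 places $i$ in $\mathrm{Cof}(I) \cap W$, hence $p$ has the right lifting property against $i$, and a retract argument exhibits $p$ as a retract of $q \in I$-inj.

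The core obstacle is producing a set $J$ with $J$-inj $= (\mathrm{Cof}(I) \cap W)$-inj, so that the small object argument applied to $J$ yields the other factorization. Choose a regular cardinal $\lambda$ large enough that $\mathcal{C}$ is locally $\lambda$-presentable, every element of $I$ has $\lambda$-presentable domain and codomain, and $W$ is $\lambda$-accessible and $\lambda$-accessibly embedded in $\mathcal{C}^{\to}$. Let $J$ be a set of representatives, up to isomorphism, of morphisms in $\mathrm{Cof}(I) \cap W$ between $\lambda$-presentable objects. Trivially $J$-inj $\supseteq (\mathrm{Cof}(I) \cap W)$-inj. For the reverse inclusion, given a map $p$ with the right lifting property against $J$ and a lifting problem against some $f \in \mathrm{Cof}(I) \cap W$, I would argue as follows: write $f$ as a $\lambda$-filtered colimit of morphisms between $\lambda$-presentable objects, and use the accessibility and accessible embedding of $W$ (together with closure under the operations in (iii)) to arrange that cofinally many of these approximating maps lie in $\mathrm{Cof}(I) \cap W$, hence in $J$ up to isomorphism. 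Solving the lifting problems at each stage and passing to the colimit gives the desired lift. This ``solution set''-style argument, which goes back to Smith's unpublished lemma, is the real technical heart of the proof; everything else is a formal consequence.

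Once $J$ is in hand, the small object argument factors every map as a $J$-cofibration followed by a map in $J$-inj $= (\mathrm{Cof}(I) \cap W)$-inj $=$ fibrations; by construction every $J$-cofibration is a retract of a transfinite composition of pushouts of maps in $J \subseteq \mathrm{Cof}(I) \cap W$, and (iii) ensures such retracts lie in $\mathrm{Cof}(I) \cap W$. The two lifting axioms follow from the definition of the fibrations by the right lifting property and from the identification of trivial fibrations with $I$-inj. Finally, cofibrant generation is witnessed by $I$ and $J$. The main obstacle throughout is the construction of $J$ and the verification that $J$-inj really recovers $(\mathrm{Cof}(I) \cap W)$-inj; the accessibility hypothesis (iv) is used precisely at this step.
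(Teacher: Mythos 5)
Your overall architecture matches the paper's (necessity is routine except for (iv); the substance of sufficiency is producing a generating set $J$ for $\mathrm{Cof(I)}\cap W$ and then invoking the standard recognition theorem), but the step you yourself identify as ``the real technical heart'' does not work as written. Given $p\in J\text{-inj}$ and $f\in\mathrm{Cof(I)}\cap W$ presented as a $\lambda$-filtered colimit of maps $f_d$ lying in $J$, solving the induced lifting problems of each $f_d$ against $p$ and ``passing to the colimit'' is not possible: the individual lifts need not be compatible with the transition morphisms of the diagram, and there is no way in general to choose them coherently. The class of maps with the left lifting property against $p$ is closed under retracts, pushouts, coproducts and transfinite compositions, but \emph{not} under filtered colimits in $\mathcal{C}^{\to}$ --- this is exactly the obstruction that makes Smith's theorem nontrivial. (A secondary gap in the same step: to get cofinally many approximants in $\mathrm{Cof(I)}\cap W$, rather than merely in $W$, you also need $\mathrm{Cof(I)}$ to be accessible and accessibly embedded in $\mathcal{C}^{\to}$ and the intersection to remain accessible; neither is supplied.) Your candidate $J$ --- the maps of $\mathrm{Cof(I)}\cap W$ between $\lambda$-presentable objects --- does in fact generate for $\lambda$ large enough, but that is itself a theorem of Lurie and Rosick\'y whose proofs use either the already-constructed model structure or a much more delicate ``good colimit'' cellularization; it cannot be obtained by the colimit-of-lifts argument.

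The paper's proof avoids this entirely by reducing to Beke's form of Smith's theorem: condition (iv) implies, via \cite[Theorem 2.53]{AR}, that $W$ is cone-reflective and hence satisfies the solution-set condition at $\mathrm{I}$ (and is closed under retracts by \cite[Proposition 1.19]{Be}). The generating set $J$ is then built from the solution set, and the verification that $J\text{-inj}\subseteq(\mathrm{Cof(I)}\cap W)\text{-inj}$ proceeds by factoring a given $f\in\mathrm{Cof(I)}\cap W$ as $q\circ j$ with $j\in\mathrm{cell}(J)$ and $q\in J\text{-inj}$, showing $q\in W$ by 2-out-of-3 and then $q\in\mathrm{I}\text{-inj}$ using the solution-set property, and concluding that $f$ is a retract of $j$ by the retract argument --- a genuinely different mechanism from yours. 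Separately, your treatment of the necessity of (iv) is not a proof: that $W$ is accessible and accessibly embedded in any combinatorial model category was a conjecture of Smith, settled in \cite{Lu}, \cite{Ro} and \cite{R}; it does not follow from the vague statement that $W$ is ``the class of maps with the right lifting property against some set modulo an accessible closure.'' Your identification of the trivial fibrations with $\mathrm{I}\text{-inj}$ and the retract-closure of $W$ are fine.
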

\begin{proof}
Every accessible, accessibly embedded subcategory of a locally presentable category is cone-reflective by \cite[Theorem 2.53]{AR}. Hence it satisfies the solution set condition at every morphism. Moreover, it is closed under retracts \cite[Proposition 1.19]{Be}. Then the sufficiency of the conditions follows from J. H. Smith's recognition theorem \cite[Theorem 1.7]{Be}. The key part of the proof of \cite[Theorem 1.7]{Be} is to use the fact that $W$ is cone-reflective at I in order to obtain a generating set J for $\mathrm{Cof(I)} \cap W$, see \cite[Lemma 1.9]{Be}. The rest of the proof is an easy application of the more standard recognition theorem for cofibrantly generated model categories, see e.g. \cite[Theorem 2.1.19]{Ho}. (This is a small simplification of the proof given in \cite{Be} that we learned from G. Maltsiniotis.). The necessity of (i),(ii) and (iii) is obvious. Proofs of the necessity of (iv) can be found in \cite[Corollary A.2.6.6]{Lu}, \cite[Theorem 4.1]{Ro} and \cite{R}.
\end{proof}

The power of this theorem, when compared to the standard recognition theorem for cofibrantly generated model categories (e.g. see \cite[Theorem 2.1.19]{Ho}), is that it does not assume as given a set of generating 
trivial cofibrations (but it does not produce a very explicit one either), but rather its existence is essentially a consequence of the accessibility properties of the class of weak equivalences. Condition (iv) should be normally the most difficult to verify in the applications of the theorem. The following proposition will be useful.

\begin{proposition} \label{AR250}
Let $F: \mathcal{C} \to \mathcal{D}$ be an accessible functor and $\mathcal{D}'$ be an accessible and accessibly embedded subcategory of $\mathcal{D}$. Then $F^{-1}(\mathcal{D}')$ is an accessible and accessibly embedded subcategory of $\mathcal{C}$.   
\end{proposition}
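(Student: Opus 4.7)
The plan is to realize $F^{-1}(\mathcal{D}')$ as (the essential image of) a pseudo-pullback of accessible categories along accessible functors, and then invoke the pseudo-pullback theorem for accessible categories (Makkai--Par\'{e}; see \cite{AR}).

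First I would fix a regular cardinal $\lambda$ such that $\mathcal{C}$, $\mathcal{D}$ and $\mathcal{D}'$ are all $\lambda$-accessible, and such that both $F$ and the inclusion $\mathcal{D}' \hookrightarrow \mathcal{D}$ preserve $\lambda$-directed colimits. Such a common $\lambda$ exists because the rank of accessibility of a fixed accessible category or accessible functor can always be sharpened, by the standard uniformization arguments of \cite[Chapter 2]{AR}.

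The easier half, that the inclusion $F^{-1}(\mathcal{D}') \hookrightarrow \mathcal{C}$ preserves $\lambda$-directed colimits (i.e.\ that $F^{-1}(\mathcal{D}')$ is accessibly embedded), is essentially formal. Given a $\lambda$-directed diagram $(X_i)$ in $F^{-1}(\mathcal{D}')$ with colimit $X$ taken in $\mathcal{C}$, the $\lambda$-accessibility of $F$ gives $F(X) \cong \mathrm{colim}_i \, F(X_i)$, while the $\lambda$-accessibility of the inclusion $\mathcal{D}' \hookrightarrow \mathcal{D}$ forces this colimit to be computed already in $\mathcal{D}'$ and hence to lie there. Thus $F(X) \in \mathcal{D}'$, so $X \in F^{-1}(\mathcal{D}')$, and the inclusion commutes with $\lambda$-directed colimits.

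For the harder half, that $F^{-1}(\mathcal{D}')$ is itself accessible, I would form the pseudo-pullback
$$ \mathcal{P} := \mathcal{C} \times^{\mathrm{ps}}_{\mathcal{D}} \mathcal{D}', $$
whose objects are triples $(X, Y, \varphi)$ with $X \in \mathcal{C}$, $Y \in \mathcal{D}'$, and $\varphi \colon F(X) \xrightarrow{\cong} Y$ an isomorphism in $\mathcal{D}$. The pseudo-pullback theorem for accessible categories from \cite{AR} guarantees that $\mathcal{P}$ is accessible and that the two projection functors are accessible. Since $\mathcal{D}'$ is a replete full subcategory of $\mathcal{D}$ (a convention built into the notion of ``accessibly embedded''), the projection $\mathcal{P} \to \mathcal{C}$ is fully faithful with essential image precisely $F^{-1}(\mathcal{D}')$. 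Hence $F^{-1}(\mathcal{D}')$ is equivalent to the accessible category $\mathcal{P}$, and the inclusion into $\mathcal{C}$ is identified with the accessible projection $\mathcal{P} \to \mathcal{C}$, establishing both assertions.

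The main obstacle is invoking the pseudo-pullback theorem, which is a nontrivial structural result in the theory of accessible categories requiring careful control of the rank of accessibility under the pullback construction. Everything else amounts to routine book-keeping, modulo the implicit convention that accessibly embedded subcategories are replete and full.
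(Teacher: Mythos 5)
Your argument is correct: the paper proves this proposition simply by citing \cite[Remark 2.50]{AR}, and your realization of $F^{-1}(\mathcal{D}')$ as the pseudo-pullback of $F$ along the inclusion $\mathcal{D}' \hookrightarrow \mathcal{D}$, followed by the Makkai--Par\'{e} pseudo-pullback theorem, is precisely the argument behind that reference. So you have taken essentially the same route, just spelled out in full.
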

\begin{proof}
See \cite[Remark 2.50]{AR}.
\end{proof}

We proceed to the proof of Theorem A with the verification of the conditions (i)-(iv). Condition (i) is obviously satisfied by the class $\mathcal{W}_{\mathcal{R}}$.

\begin{proposition} \label{accessibility}
The class of weak equivalences $\mathcal{W}_{\mathcal{R}}$ is accessible and accessibly embedded in $\sCoalg_{\mathcal{R}}^{\to}$.
\end{proposition}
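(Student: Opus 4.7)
The plan is to reduce the statement to Proposition \ref{AR250}. By definition, a morphism $f: A \to B$ in $\sCoalg_{\mathcal{R}}$ belongs to $\mathcal{W}_{\mathcal{R}}$ if and only if its image under the forgetful functor $U: \sCoalg_{\mathcal{R}} \to \sPSh$ is a local weak equivalence. Let $\mathcal{W} \subseteq \sPSh^{\to}$ denote the class of local weak equivalences of simplicial presheaves. Then $\mathcal{W}_{\mathcal{R}} = (U^{\to})^{-1}(\mathcal{W})$, where $U^{\to}: \sCoalg_{\mathcal{R}}^{\to} \to \sPSh^{\to}$ is the functor induced on arrow categories. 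Once I verify that $U^{\to}$ is accessible and that $\mathcal{W}$ is accessible and accessibly embedded in $\sPSh^{\to}$, Proposition \ref{AR250} delivers the result (note that $\sCoalg_{\mathcal{R}}^{\to}$ is locally presentable by Theorem \ref{BarrFox}, since the arrow category of a locally presentable category is again locally presentable).

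For the accessibility of $U^{\to}$, it suffices to show that $U = \iota \circ \Phi$ preserves $\lambda$-filtered colimits. The forgetful functor $\Phi: \sCoalg_{\mathcal{R}} \to \sMod_{\mathcal{R}}$ is a left adjoint by Proposition \ref{cofreedom} (comonadicity), hence it preserves all colimits. The further forgetful functor $\iota: \sMod_{\mathcal{R}} \to \sPSh$ preserves filtered colimits since filtered colimits of $\mathcal{R}(U)$-modules (hence of simplicial $\mathcal{R}$-modules) are computed sectionwise on underlying sets. Therefore $U$ is accessible, and passing to arrow categories preserves this property in the standard way.

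For the class $\mathcal{W}$, I would appeal to the fact that the local injective model structure $\sPSh_{\mathrm{inj}}$ reviewed in section \ref{recollect} is combinatorial: it is cofibrantly generated and the underlying category $\sPSh$ is locally presentable. By the necessity of condition (iv) in Theorem \ref{j.smith} (with references given there to \cite{Lu}, \cite{Ro}, \cite{R}), the class of weak equivalences of any combinatorial model category is accessible and accessibly embedded in its arrow category. Applied to $\sPSh_{\mathrm{inj}}$, this gives exactly what we need for $\mathcal{W}$.

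I do not anticipate any serious obstacle here: all pieces are either standard facts about locally presentable/accessible categories or citations to results already invoked in the paper. The only point that requires minor care is ensuring that the rank of accessibility of $U$ is compatible with the one for which $\mathcal{W}$ is accessible and accessibly embedded, but this can always be arranged by passing to a sufficiently large regular cardinal (a standard sharpness argument in the theory of accessible categories, as in \cite{AR}).
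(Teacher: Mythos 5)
Your proposal is correct and follows essentially the same route as the paper: both decompose the forgetful functor as $\iota \circ \Phi$ (left adjoint followed by a directed-colimit-preserving right adjoint) to get accessibility, invoke the necessity of condition (iv) of Theorem \ref{j.smith} for the combinatorial model category $\sPSh_{\mathrm{inj}}$ to handle the class of local weak equivalences, and conclude by Proposition \ref{AR250} applied to the induced functor on arrow categories. The extra remarks on matching ranks of accessibility are fine but not needed beyond what \cite{AR} already guarantees.
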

\begin{proof}
By Theorem \ref{j.smith}, the class of local weak equivalences in $\sPSh$ is accessible and accessibly embedded in $\sPSh^{\to}$. The forgetful functor $\sCoalg_{\mathcal{R}} \to \sPSh$ is accessible being the composition of the forgetful left adjoint $\Phi: \sCoalg_{\mathcal{R}} \to \sMod_{\mathcal{R}}$ (which preserves all colimits) followed by the forgetful right adjoint $\iota: \sMod_{\mathcal{R}} \to \sPSh$ (which preserves directed colimits). It follows that the class $\mathcal{W}_{\mathcal{R}}$ is accessible and accessibly embedded in $\sCoalg_{\mathcal{R}}^{\to}$ by Proposition \ref{AR250}.  
\end{proof}

Let $\kappa$ be a regular cardinal such that $\sCoalg_{\mathcal{R}}$ is locally $\kappa$-presentable. For example, this can be the choice of cardinal from Theorem \ref{BarrFox}. Also assume that $\kappa > \mathrm{max}\{\mathrm{card}(Mor\mathcal{C}), \aleph_0 \}$. 

Let $\mathcal{I}$ denote the set of $\Phi$-monomorphisms between $\kappa$-presentable objects in $\sCoalg_{\mathcal{R}}$. Recall that a map $f: A \to B$ in $\sCoalg_{\mathcal{R}}$ is called a $\Phi$-monomorphism if the map between the underlying simplicial $\mathcal{R}$-modules is a monomorphism. It is clear that every morphism in $\mathrm{Cof(\mathcal{I})}$ is a $\Phi$-monomorphism. So it follows that the class $\mathrm{Cof(\mathcal{I})} \cap \mathcal{W}_{\mathcal{R}}$ is closed under pushouts and transfinite compositions since they are created in $\sMod^{\mathrm{inj}}_{\mathcal{R}}$. So it remains to verify condition (ii) of Theorem \ref{j.smith}. First we prove the following key lemma.

\begin{lemma} \label{factorisation}
Every map $f: A \to B$ in $\sCoalg_{\mathcal{R}}$ admits a factorization $f=pi$ in $\sCoalg_{\mathcal{R}}$ 
such that the following are satisfied:
\begin{itemize}
\item [(a)] $i$ is a $\Phi$-monomorphism,
\item[(b)]  the domain of $p$ is $\kappa$-presentable if both $A$ and $B$ are $\kappa$-presentable,
\item[(c)] $\Phi(p)$ is a trivial fibration in $\sMod^{\mathrm{proj}}_{\mathcal{R}}$.
\end{itemize}
\end{lemma}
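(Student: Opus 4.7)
The plan is to realize the mapping cylinder factorization of $f$ at the level of coalgebras. Since the simplicial presheaf $\underline{\Delta^1}$ becomes an $\mathcal{R}$-coalgebra via $\mathcal{R}\{-\}: \sPSh \to \sCoalg_{\mathcal{R}}$, one can form the tensor $A \otimes \mathcal{R}\{\underline{\Delta^1}\}$ in $\sCoalg_{\mathcal{R}}$ and set
\[
 C := A \otimes \mathcal{R}\{\underline{\Delta^1}\} \sqcup_A B,
\]
the pushout in $\sCoalg_{\mathcal{R}}$ of $A \otimes \mathcal{R}\{\underline{\Delta^1}\} \xleftarrow{d^0} A \xrightarrow{f} B$, where $d^0$ is induced by the vertex inclusion at $0 \in \Delta^1$. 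Take $i: A \to C$ to be the composite of the other vertex inclusion $d^1: A \to A \otimes \mathcal{R}\{\underline{\Delta^1}\}$ with the canonical map into the pushout, and $p: C \to B$ to be the fold map, given by $f$ composed with the collapse $\mathcal{R}\{\underline{\Delta^1}\} \to \mathcal{R}$ on the first summand and by the identity on $B$. Then $pi = f$ by construction.

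Properties (a) and (b) will be read off after applying $\Phi$. Since $\Phi$ is a left adjoint (Proposition \ref{cofreedom}) and the underlying module of the tensor of two coalgebras is the tensor of the underlying modules, one has
\[
 \Phi(C) = \Phi(A) \otimes \mathcal{R}\{\underline{\Delta^1}\} \sqcup_{\Phi(A)} \Phi(B),
\]
which is the standard mapping cylinder of $\Phi(f)$ in $\sMod_{\mathcal{R}}$. The map $\Phi(i)$ is the inclusion at the ``free'' endpoint, which is sectionwise a split monomorphism with retraction induced by the augmentation $\mathcal{R}(U)[\Delta^1] \to \mathcal{R}(U)$; this gives (a). For (b), $\mathcal{R}\{\underline{\Delta^1}\}$ is $\kappa$-presentable in $\sCoalg_{\mathcal{R}}$, since the constant presheaf $\underline{\Delta^1}$ is $\kappa$-presentable in $\sPSh$ whenever $\kappa > \mathrm{card}(Mor\,\mathcal{C})$ and $\mathcal{R}\{-\}$ preserves $\kappa$-presentability by the remarks at the end of section \ref{background}. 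Provided $\kappa$ is chosen further so that the tensor product in $\sCoalg_{\mathcal{R}}$ preserves $\kappa$-presentability, $C$ is a finite colimit of $\kappa$-presentable coalgebras, hence itself $\kappa$-presentable.

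For (c), by Proposition \ref{trivialfib} it suffices to verify that $\Phi(p)$ is both an epimorphism and a sectionwise weak equivalence. The epimorphism property is immediate, as the canonical inclusion $\Phi(B) \to \Phi(C)$ is a section of $\Phi(p)$. For the sectionwise weak equivalence, at each $U \in \Ob\,\mathcal{C}$ the simplicial $\mathcal{R}(U)$-module $\mathcal{R}(U)[\Delta^1]$ is levelwise free and contractible, so the inclusion $\Phi(A)(U) \to \Phi(A)(U) \otimes \mathcal{R}(U)[\Delta^1]$ is simultaneously a levelwise split monomorphism and a weak equivalence. Its pushout along $\Phi(f)(U)$ is therefore again a weak equivalence (the pushout of a weak equivalence along a levelwise split monomorphism preserves weak equivalences in simplicial $\mathcal{R}(U)$-modules), and the fold map $\Phi(C)(U) \to \Phi(B)(U)$ is then a weak equivalence by two-out-of-three applied to the identity on $\Phi(B)(U)$.

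The main subtlety lies in (b), where one must ensure that $\kappa$ is chosen large enough that $\kappa$-presentable coalgebras are stable under tensoring with $\mathcal{R}\{\underline{\Delta^1}\}$; this is a mild arithmetic condition on $\kappa$. Once this is arranged, the remaining verifications reduce to elementary properties of mapping cylinders in categories of simplicial modules.
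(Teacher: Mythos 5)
Your proof is correct and is essentially the paper's own argument: both construct the mapping cylinder of $f$ as the pushout of $A \to A \otimes \mathcal{R}\{\underline{\Delta}^1\}$ along $f$ in $\sCoalg_{\mathcal{R}}$, use that $\Phi$ preserves pushouts and tensor products to identify $\Phi(M(f))$ with the module-level mapping cylinder, and deduce (a), (b), (c) from the split structure and Proposition \ref{trivialfib}. Your extra remark that $\kappa$ must be large enough for $-\otimes\mathcal{R}\{\underline{\Delta}^1\}$ to preserve $\kappa$-presentability is a fair point that the paper states without elaboration, but it does not change the argument.
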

\begin{proof} 
Let $\underline{\mathcal{R}}$ denote the constant simplicial $\mathcal{R}$-coalgebra at $\mathcal{R}$ viewed as an $\mathcal{R}$-coalgebra. This is the same as $\mathcal{R}\{\underline{\Delta}^0\}$. (Recall that $\underline{\Delta}^n$ denotes the constant simplicial presheaf whose value is the standard $n$-simplex everywhere.). The fold map $\underline{\mathcal{R}} \oplus \underline{\mathcal{R}} \to \underline{\mathcal{R}}$ in $\sCoalg_{\mathcal{R}}$ admits a factorization as required, induced by the factorization in $\sPSh$,  
\begin{displaymath}
\underline{\Delta}^0 \sqcup \underline{\Delta}^0 \stackrel{i_0 \sqcup i_1}{\longrightarrow} \underline{\Delta}^1 \stackrel{p}{\to} \underline{\Delta}^0
\end{displaymath}
and applying the functor $\RR: \sPSh \to \sCoalg_{\mathcal{R}}$ (This functor will be discussed in detail in section \ref{comparisonI}). In the general case of a map $f: A \to B$, define the mapping cylinder $M(f)$ in the standard way by a pushout diagram in $\sCoalg_{\mathcal{R}}$,
\begin{displaymath}
\xymatrix{
A \cong A \otimes \underline{\mathcal{R}} \ar[r]^{1 \otimes \mathcal{R}\{i_0\} } \ar[d]^f & A \otimes \mathcal{R}\{\underline{\Delta}^1\} \ar[d] \\
B \ar[r]^{j} & M(f) 
}
\end{displaymath}
The mapping cylinder construction yields a factorization of $f: A \to B$ as
\begin{displaymath}
A  \stackrel{i}{\rightarrow} M(f) \stackrel{p}{\to} B
\end{displaymath}
in the usual way.  The map $\Phi(i)$ is clearly a monomorphism of simplicial presheaves, so (a) is satisfied. If $A$ is $\kappa$-presentable then so is $A \otimes \mathcal{R}\{\underline{\Delta}^1\}$, and therefore (b) is also satisfied. Note that $p$ is a split epimorphism since $pj = 1_B$. The map $$\Phi(1 \otimes \mathcal{R}\{i_0\}): \Phi(A) \to \Phi(A \otimes \mathcal{R}\{\underline{\Delta}^1\}) \cong \Phi(A) \otimes \mathcal{R}\{\underline{\Delta}^1\}$$ is a sectionwise weak equivalence and a monomorphism. Since the pushout square above also defines a pushout of simplicial $\mathcal{R}$-modules, it follows that $\Phi(j)$ is a sectionwise weak equivalence. Hence $\Phi(p)$ is a sectionwise epimorphism and weak equivalence, so by Proposition \ref{trivialfib}, it is a trivial fibration in $\sMod^{\mathrm{proj}}_{\mathcal{R}}$ as required by (c).
\end{proof}

\begin{proposition} \label{orthogonality}
$\mathcal{I} - \mathrm{inj} \subseteq \mathcal{W}_{\mathcal{R}}$.
\end{proposition}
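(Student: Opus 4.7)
The plan is to apply the mapping cylinder factorization of Lemma \ref{factorisation} to the map $p: X \to Y$ itself, obtaining $p = \pi \circ \iota$ with $\iota: X \to M(p)$ a $\Phi$-monomorphism and $\Phi(\pi)$ a trivial fibration in $\sMod^{\mathrm{proj}}_{\mathcal{R}}$. By Proposition \ref{trivialfib}, $\Phi(\pi)$ is in particular a sectionwise weak equivalence of the underlying simplicial presheaves; since sectionwise weak equivalences are local weak equivalences, we obtain $\pi \in \mathcal{W}_{\mathcal{R}}$. Because the class $\mathcal{W}_{\mathcal{R}}$ is stable under retracts (inherited from the corresponding property of local weak equivalences in $\sPSh$), it is enough to exhibit $p$ as a retract of $\pi$ in the arrow category $\sCoalg_{\mathcal{R}}^{\to}$.

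Setting $s = \iota$ (so that $\pi \iota = p$), this retract is produced once we construct a morphism $h: M(p) \to X$ in $\sCoalg_{\mathcal{R}}$ satisfying $h \iota = 1_X$ and $p h = \pi$. Then the diagram
\begin{displaymath}
\xymatrix{
X \ar[r]^{\iota} \ar[d]_{p} & M(p) \ar[r]^{h} \ar[d]^{\pi} & X \ar[d]^{p} \\
Y \ar[r]^{1_Y} & Y \ar[r]^{1_Y} & Y
}
\end{displaymath}
displays $p$ as a retract of $\pi$. Constructing such an $h$ amounts to solving the lifting problem whose top edge is $1_X$, left edge $\iota$, right edge $p$, and bottom edge $\pi$. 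Since $\mathcal{I} - \inj = \mathrm{Cof}(\mathcal{I}) - \inj$, a solution exists as soon as $\iota \in \mathrm{Cof}(\mathcal{I})$.

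The crux of the proof is therefore the claim that every $\Phi$-monomorphism of $\sCoalg_{\mathcal{R}}$ lies in $\mathrm{Cof}(\mathcal{I})$. To establish it I would combine the local $\kappa$-presentability of $\sCoalg_{\mathcal{R}}$ (Theorem \ref{BarrFox}) with the Barr-type fact, implicit in its proof, that every element of an $\mathcal{R}$-coalgebra lies in a $\kappa$-presentable sub-coalgebra. Given a $\Phi$-monomorphism $j: C \to D$, one assembles the poset $\mathcal{P}_j$ whose elements are pairs $(C', D')$ of $\kappa$-presentable sub-coalgebras $C' \subseteq C$ and $D' \subseteq D$ with $j(C') \subseteq D'$, ordered by componentwise inclusion. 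A standard argument shows that $\mathcal{P}_j$ is $\kappa$-filtered and that $j = \mathrm{colim}_{\mathcal{P}_j}(C' \hookrightarrow D')$ in $\sCoalg_{\mathcal{R}}^{\to}$. Each restricted inclusion belongs to $\mathcal{I}$, and since $\mathrm{Cof}(\mathcal{I})$ is closed under $\kappa$-filtered colimits in the arrow category, we conclude $j \in \mathrm{Cof}(\mathcal{I})$.

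The main obstacle is precisely this cellular decomposition step: one must carefully verify that $\mathcal{P}_j$ is $\kappa$-filtered and that its colimit recovers $j$ in $\sCoalg_{\mathcal{R}}^{\to}$, both of which ultimately rest on the element-wise refinement argument underlying Theorem \ref{BarrFox}. Once the decomposition is in hand, the remainder of the argument---the factorization of Lemma \ref{factorisation}, the identification of $\pi$ as a weak equivalence via Proposition \ref{trivialfib}, and the retract construction---is essentially formal.
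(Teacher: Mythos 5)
Your opening moves (mapping-cylinder factorization of $q$ itself, $\pi\in\mathcal{W}_{\mathcal{R}}$ via Proposition \ref{trivialfib}, and the retract argument) are fine as far as they go, but the step you yourself identify as the crux --- that every $\Phi$-monomorphism lies in $\mathrm{Cof}(\mathcal{I})$ --- is a genuine gap, and the mechanism you propose for it does not work. The class $\mathrm{Cof}(\mathcal{I})$ is closed under retracts, pushouts, coproducts and transfinite compositions, but it is \emph{not} closed under $\kappa$-filtered colimits in the arrow category: already in abelian groups, with $\mathcal{I}=\{0\to\mathbb{Z}\}$, the map $0\to\mathbb{Q}$ is a filtered colimit in the arrow category of copies of $0\to\mathbb{Z}$ yet is not in $\mathrm{Cof}(\mathcal{I})$. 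So exhibiting $j$ as $\mathrm{colim}_{\mathcal{P}_j}(C'\hookrightarrow D')$ with each piece in $\mathcal{I}$ proves nothing about membership in $\mathrm{Cof}(\mathcal{I})$. The argument that \emph{does} show ``all monomorphisms are generated by the small ones'' in toposes and Grothendieck abelian categories is a transfinite attachment of subobjects one cell at a time, and it requires that intersections of subobjects exist and that unions are effective. Precisely this fails for $\mathcal{R}$-coalgebras over a general ring because $\otimes$ is not left exact, and the paper's remark following the proof of Theorem A explicitly flags that it is unclear whether the $\Phi$-monomorphisms are contained in $\mathrm{Cof}(\mathcal{I})$. (Filteredness of your poset $\mathcal{P}_j$ is itself delicate for the same reason: sums and images of subcoalgebras need not be subcoalgebras.)

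The paper's proof is arranged exactly so as to avoid this issue. Given $q\in\mathcal{I}\text{-}\mathrm{inj}$, it shows directly that $\Phi(q)$ is a trivial fibration in $\sMod^{\mathrm{proj}}_{\mathcal{R}}$ by solving lifting problems against the \emph{finitely presentable} generating projective cofibrations $t$ of $\sMod^{\mathrm{proj}}_{\mathcal{R}}$. Writing $q$ as a $\kappa$-filtered colimit of maps $f\colon A\to B$ between $\kappa$-presentable coalgebras, any such lifting problem factors through some $\Phi(f)$; only \emph{there} is the mapping-cylinder factorization $f=p\circ i$ of Lemma \ref{factorisation} invoked, and its point (b) guarantees that $i$ has $\kappa$-presentable domain and codomain, hence genuinely lies in $\mathcal{I}$ and can be lifted against $q$ by hypothesis, while $\Phi(p)$ is lifted against $t$. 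In other words, the smallness is pushed onto the test object $t$ and handled by accessibility, rather than by attempting a cellular decomposition of an arbitrary $\Phi$-monomorphism. If you want to salvage your retract strategy, you would need to prove that the particular map $\iota\colon X\to M(q)$ has the left lifting property against $q$ by some such approximation argument --- which essentially reproduces the paper's proof --- rather than by the false general closure property.
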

\begin{proof}
Let $q: X \to Y$ be a map in $\mathcal{I}- \inj$. It suffices to show that there is lift to every diagram in $\sMod_{\mathcal{R}}$
\begin{displaymath}
\xymatrix{
\cdot \ar[d]_t \ar[r] & \Phi(X) \ar[d]^{\Phi(q)} \\
\cdot \ar[r] & \Phi(Y) 
}
\end{displaymath}
where $t$ is chosen from the generating set for cofibrations of the model category $\sMod^{\mathrm{proj}}_{\mathcal{R}}$ as discussed in section \ref{recollect}. By the choice of the cardinal $\kappa$, the map $q$ is a $\kappa$-directed colimit of $\kappa$-presentable objects in $\sCoalg_{\mathcal{R}}^{\to}$. Since the morphism $t$ is finitely presentable in $\sMod_{\mathcal{R}}^{\to}$ and $\Phi$ preserves colimits, there is a factorization 
\begin{displaymath}
\xymatrix{
\cdot \ar[r] \ar[d]_t & \Phi(A) \ar[d]^{\Phi(f)} \ar[r]^{\Phi(\alpha)} & \Phi(X) \ar[d]^{\Phi(q)} \\
\cdot \ar[r] & \Phi(B) \ar[r]^{\Phi(\beta)} & \Phi(Y) 
}
\end{displaymath}
where $A$ and $B$ are $\kappa$-presentable and the right-hand side square is the image of a commutative square in $\sCoalg_{\mathcal{R}}$ under $\Phi$. If we factorize the map $f$ as in Lemma \ref{factorisation}, we 
obtain a commutative diagram
\begin{displaymath}
\xymatrix{
& \Phi(A) \ar[d]^{\Phi(i)} \ar[r]^{\Phi(\alpha)} & \Phi(X) \ar[dd]^{\Phi(q)} \\
\cdot \ar[ur] \ar[r] \ar[d]_t & \Phi(C) \ar@{.>}[ur]_{\Phi(h_2)} \ar[d]^{\Phi(p)} & \\
\cdot \ar@{.>}[ur]_{h_1} \ar[r] & \Phi(B) \ar[r]^{\Phi(\beta)} & \Phi(Y)
}
\end{displaymath}
By the assumption that $\Phi(p)$ is a trivial fibration in $\sMod^{\mathrm{proj}}_{\mathcal{R}}$, it follows that there exists a lift $h_1$ as indicated in the diagram. By the construction of the factorization in Lemma \ref{factorisation}, $C$ is $\kappa$-presentable, so the map $i$ is in $\mathcal{I}$. Therefore there is a morphism $h_2: C \to X$ such that the following diagram in $\sCoalg_{\mathcal{R}}$ commutes
\begin{displaymath}
\xymatrix{
A \ar[d]_i \ar[r]^{\alpha} & X \ar[d]^q \\
C \ar[ur]^{h_2} \ar[r]^{\beta p} & Y 
}
\end{displaymath}
Then composition $h= \Phi(h_2) h_1$ provides a lift to the original diagram and hence the result follows.
\end{proof}

By Theorem \ref{j.smith}, it follows that the classes of weak equivalences $\mathcal{W}_{\mathcal{R}}$ and cofibrations $\mathrm{Cof(\mathcal{I})}$ define a cofibrantly generated model category structure on $\sCoalg_{\mathcal{R}}$. It is left proper because $\sMod^{\inj}_{\mathcal{R}}$ is left proper (see Theorem \ref{modules}) and the forgetful functor $$\Phi: \sCoalg_{\mathcal{R}} \to \sMod^{\mathrm{inj}}_{\mathcal{R}}$$ preserves pushouts.

The simplicial structure on $\sCoalg_{\mathcal{R}}$ is defined as follows. For every simplicial set $K$ and simplicial $\mathcal{R}$-coalgebra $A$, the tensor structure is induced by $\sMod_{\mathcal{R}}$, i.e.,
\begin{equation} \label{tensor}
K \otimes A : = \mathcal{R}\{\underline{K} \} \otimes A.
\end{equation}
More explicitly, the $n$-simplices of $(K \otimes A)(U)$, $U \in \Ob \mathcal{C}$, is the tensor product $\mathcal{R}(U)$-coalgebra 
$$\mathcal{R}(U)\{K_n\} \otimes_{\mathcal{R}(U)} A(U)_n$$ 
where the coalgebraic structure on the free $\mathcal{R}(U)$-module $\mathcal{R}(U)\{ K_n \}$ is induced by the canonical maps $\Delta: K_n \to K_n \times K_n$ and $K_n \to \ast$. This defines a functor $\sSet \times \sCoalg_{\mathcal{R}} \to \sCoalg_{\mathcal{R}}$ which preserves colimits in both variables. Hence it extends to an adjunction of two variables in the sense of \cite[Definition 4.1.12]{Ho} by the special adjoint functor theorem. Then it suffices to show that given a monomorphism $i: K \hookrightarrow L$ between finitely presentable simplicial sets and a $\Phi$-monomorphism $f: A \to B$ in $\mathcal{I}$, then the morphism of the pushout product $$i \square f:  K \otimes B \cup_{K \otimes A} L \otimes A \to L \otimes B$$
is a $\Phi$-monomorphism and it is trivial if either $i$ or $f$ is trivial. Both the domain and codomain of $i \square f$ are again $\kappa$-presentable. Since $\Phi$ preserves pushouts, the morphism $\Phi(i \square f)$ is isomorphic to $$i \square \Phi(f): K \otimes \Phi(B) \cup_{K \otimes \Phi(A)} L \otimes \Phi(A) \to L \otimes \Phi(B).$$
This is a monomorphism and it is trivial if either $i$ or $\Phi(f)$ is trivial because $\sMod^{\mathrm{inj}}_{\mathcal{R}}$ is a simplicial model category by Theorem \ref{modules}. Hence the simplicial structure of \eqref{tensor} makes the model category $\sCoalg_{\mathcal{R}}$ into a simplicial model category.

This concludes the proof of Theorem A.

\begin{remark}
It is not clear whether there is a model category structure such that the cofibrations are all the $\Phi$-monomorphisms. The standard argument to show that this class is cofibrantly generated (e.g. see \cite[Proposition 1.12]{Be}) does not apply here since $\sCoalg_{\mathcal{R}}$ is not closed under the 
intersection of subobjects in $\sMod_{\mathcal{R}}$ because $\otimes: \Mod_{\mathcal{R}} \times \Mod_{\mathcal{R}} \to \Mod_{\mathcal{R}}$ is not left exact in general. For the same reason, the tensor product of simplicial $\mathcal{R}$-coalgebras, which by Theorem \ref{cartesian} gives the product functor in $\sCoalg_{\mathcal{R}}$, 
does not define a monoidal model category in general. 
\end{remark}

\section{Comparison with simplicial presheaves and simplicial $\mathcal{R}$-modules} \label{Qadjunctions}

\subsection{}\label{comparisonI} The homotopy theory of simplicial presheaves and simplicial $\mathcal{R}$-coalgebras are linked by the functor of simplicial $\mathcal{R}$-chains $\RR: \sPSh \to \sCoalg_{\mathcal{R}}$. This takes a simplicial presheaf $X: \mathcal{C}^{op} \to \sSet$ to the simplicial $\mathcal{R}$-coalgebra $\mathcal{R}\{X\}$ whose underlying $\mathcal{R}$-module is the free $\mathcal{R}$-module on $X$ (denoted also by $\mathcal{R}\{X\}$) and the coalgebraic structure is induced by the canonical maps 
$$X \stackrel{\Delta}{\rightarrow} X \times X$$ 
$$X \rightarrow \underline{\Delta}^0.$$ 
More explicitly, $\mathcal{R}\{X\}: \Delta^{op} \to \Coalg_{\mathcal{R}}$ is defined pointwise, by $$\mathcal{R}\{X\}_{n}(U) = \mathcal{R}(U)\{X(U)_n\}$$
with the coalgebraic structure induced similarly pointwise. This functor has a right adjoint $\rho: \sCoalg_{\mathcal{R}} \to \sPSh_{\mathrm{inj}}$ which is defined pointwise as follows: given $A: \Delta^{op} \to \Coalg_{\mathcal{R}}$, then $$\rho(A)_n(U) = \Coalg_{\mathcal{R}(U)}(\mathcal{R}(U), A_n(U))$$ defines a simplicial presheaf $\rho(A): \Delta^{op} \to \PSh$. This adjunction is induced by the analogous adjunction $\RR: \PSh \rightleftarrows \Coalg_{\mathcal{R}}: \rho$ from section \ref{background}.

\begin{proposition} \label{prop1}
The adjunction $\RR: \sPSh_{\mathrm{inj}} \rightleftarrows \sCoalg_{\mathcal{R}}: \rho$ is a Quillen adjunction.
\end{proposition}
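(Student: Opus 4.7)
The plan is to verify directly that $\RR$ preserves cofibrations and weak equivalences; since trivial cofibrations coincide with cofibrations that are also weak equivalences, this is equivalent to $(\RR,\rho)$ being a Quillen adjunction. I prefer this to checking that $\rho$ preserves fibrations and trivial fibrations, because fibrations in $\sPSh_{\mathrm{inj}}$ are not described concretely, whereas the two classes on the source side are tractable and the two classes on the target side are controlled by $\mathcal{I}$ and $\Phi$.

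First I would verify preservation of cofibrations. Cofibrations in $\sPSh_{\mathrm{inj}}$ are the monomorphisms, and this model category is cofibrantly generated by a set $\mathcal{I}_{\mathrm{inj}}$ of monomorphisms between $\lambda$-presentable simplicial presheaves for some regular cardinal $\lambda$. The free $\mathcal{R}$-module construction preserves monomorphisms sectionwise, so $\RR$ sends every monomorphism of simplicial presheaves to a $\Phi$-monomorphism. As noted after Theorem \ref{BarrFox}, $\rho$ preserves $\kappa$-directed colimits for any regular $\kappa > \mathrm{card}(\mathrm{Mor}\,\mathcal{C})$, so $\RR$ preserves $\kappa$-presentable objects. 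Enlarging $\kappa$ if necessary so that $\kappa \geq \lambda$ (which is compatible with the choice of $\kappa$ from Theorem A), we conclude $\RR(\mathcal{I}_{\mathrm{inj}}) \subseteq \mathcal{I}$. Since $\RR$ preserves all colimits and $\mathrm{Cof}(\mathcal{I})$ is closed under pushouts, transfinite compositions and retracts, it follows that $\RR$ sends every cofibration of $\sPSh_{\mathrm{inj}}$ into $\mathrm{Cof}(\mathcal{I})$.

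Next I would verify that $\RR$ preserves weak equivalences. Since $\mathcal{W}_{\mathcal{R}}$ consists of morphisms whose underlying map of simplicial presheaves is a local weak equivalence, and the underlying simplicial presheaf of $\mathcal{R}\{X\}$ is sectionwise a simplicial abelian group (hence sectionwise Kan), it suffices to show that the endofunctor $\iota \circ \Phi \circ \RR$ of $\sPSh$ sends local weak equivalences to local weak equivalences. For sectionwise weak equivalences this is classical: the free $R$-module functor sends weak equivalences of simplicial sets to weak equivalences of simplicial $R$-modules, by the Dold--Kan correspondence applied sectionwise. For local weak equivalences I would use Boolean localization: since $\wp^* L^2$ is a geometric morphism, it commutes canonically with the free module construction, so $\wp^* L^2 \mathcal{R}\{f\}(b)$ identifies with the free module functor over the ring $(\wp^* L^2 \mathcal{R})(b)$ applied to $(\wp^* L^2 Ex^{\infty} f)(b)$, which is a weak equivalence of simplicial sets by the hypothesis that $f$ is a local weak equivalence together with the classical fact just recalled. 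Note that no $Ex^{\infty}$ is required on the coalgebra side, as observed in section \ref{recollect}.

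The main obstacle I anticipate is the compatibility of Boolean localization with the free $\mathcal{R}$-module functor in the second step; one must check that $\wp^* L^2$ preserves the construction $\mathcal{R}\{-\} = \mathcal{R} \otimes_{\mathbb{Z}} \mathbb{Z}\{-\}$ up to natural isomorphism, which follows because $\wp^* L^2$ preserves finite limits and all colimits and hence commutes with both free abelian group functors and tensor products over $\mathbb{Z}$. Once this compatibility is in hand, the entire weak equivalence step reduces to classical simplicial algebra. The cofibration step, by contrast, is routine cofibrant-generation bookkeeping once $\kappa$ is chosen large enough.
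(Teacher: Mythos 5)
Your proposal is correct and follows essentially the same route as the paper: both arguments reduce preservation of cofibrations to the observation that $\RR$ carries a generating set of monomorphisms between $\kappa$-presentable simplicial presheaves into the generating set $\mathcal{I}$ of $\Phi$-monomorphisms between $\kappa$-presentable simplicial $\mathcal{R}$-coalgebras, and then handle trivial cofibrations by showing that $\RR$ preserves all weak equivalences. The only difference is that the paper simply cites \cite[Lemma 2.1]{Ja3} for the latter fact, whereas you sketch the Boolean-localization proof of it; your sketch is sound in outline, though the identification of $(\wp^* L^2 \mathcal{R}\{f\})(b)$ with a sectionwise free module applied to $(\wp^* L^2 Ex^{\infty} f)(b)$ needs two small patches --- the free module sheaf is the sheafification of the sectionwise free module presheaf (so one should pass through the fact that sheafification maps are local weak equivalences and that local weak equivalences of sectionwise fibrant objects over a complete Boolean algebra are sectionwise), and the substitution of $Ex^{\infty} f$ for $f$ inside $\mathcal{R}\{-\}$ should be justified by the classical fact that $\mathcal{R}\{-\}$ preserves sectionwise weak equivalences.
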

\begin{proof}
We check that $\RR$ preserves cofibrations and trivial cofibrations. A generating set $\mathcal{I}_{\mathrm{inj}}$ for the class of monomorphisms in $\sPSh$ is given by all monomorphisms between $\kappa$-presentable objects. This is a consequence of the general statement of \cite[Proposition 1.12]{Be} combined with some basic properties of the rank of presentability of presheaves, see e.g. \cite[Example 1.31]{AR}. The image of a $\kappa$-presentable object under $\RR$ is again $\kappa$-presentable. Thus the monomorphisms in $\mathcal{I}_{\mathrm{inj}}$ maps to (generating) cofibrations in $\sCoalg_{\mathcal{R}}$. It follows that $\RR$ preserves cofibrations. It also preserves trivial cofibrations because it preserves all weak equivalences (cf. \cite[Lemma 2.1]{Ja3}).
\end{proof}

There is a refinement of the Quillen adjunction above that offers a more precise comparison. This is obtained by localizing the category of simplicial presheaves at the class of $\mathcal{R}$-homology equivalences, i.e., the morphisms $f: X \to Y$ such that $\mathcal{R}\{f\}$ is a weak equivalence. Note that every local weak equivalence is an $\mathcal{R}$-homology equivalence (e.g. see \cite[Lemma 2.1]{Ja3}). The class of $\mathcal{R}$-homology equivalences is the class of weak equivalences for a new model category structure on $\sPSh$ which can be obtained as a left Bousfield localization of $\sPSh_{\mathrm{inj}}$. For background material about the Bousfield localization of 
model categories, see Hirschhorn \cite{Hi}.

\begin{theorem} \label{Bousfield}
The left Bousfield localization $\mathrm{L}_{\mathcal{R}} \sPSh_{\mathrm{inj}}$ of the model category $\sPSh_{\mathrm{inj}}$ at the class of $\mathcal{R}$-homology equivalences exists, and $$\RR: \mathrm{L}_{\mathcal{R}}\sPSh_{\mathrm{inj}} \rightleftarrows \sCoalg_{\mathcal{R}}: \rho$$ is a 
Quillen adjunction.
\end{theorem}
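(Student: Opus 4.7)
The plan is to split the theorem into two independent parts: first, the existence of $\mathrm{L}_{\mathcal{R}} \sPSh_{\mathrm{inj}}$, and second, the descent of the Quillen adjunction of Proposition \ref{prop1} to the localized model category.

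For the existence of the left Bousfield localization, I would invoke J.~H.~Smith's theorem on Bousfield localizations of combinatorial model categories. Since $\sPSh_{\mathrm{inj}}$ is left proper, simplicial and combinatorial, it suffices to exhibit a set $S$ of $\mathcal{R}$-homology equivalences such that the $S$-local equivalences coincide with the whole class of $\mathcal{R}$-homology equivalences. The way to produce such a set is via accessibility: the functor $\RR: \sPSh \to \sMod_{\mathcal{R}}$ is a left adjoint between locally presentable categories, hence accessible, so the induced functor $\sPSh^{\to} \to \sMod_{\mathcal{R}}^{\to}$ is accessible. The class of local weak equivalences in $\sMod_{\mathcal{R}}$ is accessible and accessibly embedded in $\sMod_{\mathcal{R}}^{\to}$ by the same argument as in Proposition \ref{accessibility} (together with the necessity clause of Theorem \ref{j.smith}). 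Applying Proposition \ref{AR250}, the preimage---which is exactly the class of $\mathcal{R}$-homology equivalences---is accessible and accessibly embedded in $\sPSh^{\to}$. For a sufficiently large regular cardinal $\lambda$, the (essentially small) collection of $\mathcal{R}$-homology equivalences between $\lambda$-presentable simplicial presheaves then yields the desired generating set $S$, every $\mathcal{R}$-homology equivalence being a $\lambda$-filtered colimit (in the arrow category) of maps from $S$. Standard small-object/localization arguments then give that the $S$-local equivalences recover the full class of $\mathcal{R}$-homology equivalences, and Smith's theorem produces $\mathrm{L}_{\mathcal{R}} \sPSh_{\mathrm{inj}}$.

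For the Quillen adjunction, the cofibrations of $\mathrm{L}_{\mathcal{R}} \sPSh_{\mathrm{inj}}$ are those of $\sPSh_{\mathrm{inj}}$, namely the monomorphisms, so by Proposition \ref{prop1} the functor $\RR$ continues to preserve cofibrations after localization. By the standard criterion for a Quillen adjunction to descend to a left Bousfield localization, it remains to check that $\RR$ sends the new weak equivalences to weak equivalences in $\sCoalg_{\mathcal{R}}$. This step is essentially tautological from the definitions: a weak equivalence in $\sCoalg_{\mathcal{R}}$ is a local weak equivalence of underlying simplicial presheaves, which for a map of the form $\mathcal{R}\{f\}$ coincides with a weak equivalence in the local structure on $\sMod_{\mathcal{R}}$, which is by definition the condition that $f$ be an $\mathcal{R}$-homology equivalence.

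The only real work is the accessibility argument producing $S$; the descent of the Quillen pair is immediate because the target category $\sCoalg_{\mathcal{R}}$ is unchanged and its weak equivalences are compatible with $\mathcal{R}$-homology equivalences by construction.
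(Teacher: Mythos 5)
Your second half (descent of the Quillen pair) is fine and is essentially what the paper does: the cofibrations are unchanged by the localization, so $\RR$ still preserves them, and it sends $\mathcal{R}$-homology equivalences to weak equivalences of $\sCoalg_{\mathcal{R}}$ by the very definition of the latter, hence sends trivial cofibrations to trivial cofibrations. Your accessibility argument for the class of $\mathcal{R}$-homology equivalences is also essentially the paper's (the paper pulls back $\mathcal{W}_{\mathcal{R}}$ along $\RR^{\to}$ into $\sCoalg_{\mathcal{R}}^{\to}$ rather than going through $\sMod_{\mathcal{R}}^{\to}$, which is immaterial).

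The gap is in how you use the accessibility. You reduce existence to exhibiting a set $S$ whose $S$-local equivalences are exactly the $\mathcal{R}$-homology equivalences, take $S$ to be the $\mathcal{R}$-homology equivalences between $\lambda$-presentable objects, and assert that ``standard small-object/localization arguments'' identify the $S$-local equivalences with the whole class $W$. That identification is the entire content of the theorem and does not follow from the set-localization theorem plus the filtered-colimit presentation of $W$. Concretely: (a) for $W \subseteq S\text{-loc}$ you would need the $S$-local equivalences to be closed under $\lambda$-filtered colimits in the arrow category for the \emph{same} $\lambda$ used to define $S$, but the closure cardinal for the weak equivalences of $\mathrm{L}_S\sPSh_{\mathrm{inj}}$ is only known a posteriori and may exceed $\lambda$; (b) for $S\text{-loc} \subseteq W$ you need the fibrant replacements in $\mathrm{L}_S\sPSh_{\mathrm{inj}}$, built by pushouts and transfinite compositions of cofibrations manufactured from $S$, to stay inside $W$, i.e.\ you need $\mathrm{Cof}\cap W$ to be closed under cobase change and transfinite composition --- a condition you never verify. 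The paper sidesteps both issues by applying Theorem \ref{j.smith} directly to the class $W$ with $\mathrm{I}$ a generating set of monomorphisms: conditions (i), (ii) and (iv) are the easy and accessibility parts you already have, and the only substantive point is precisely the closure condition (iii), which is obtained by transporting along $\RR$ (which preserves pushouts, transfinite compositions and, by Proposition \ref{prop1}, cofibrations) to the corresponding closure property already established for $\sCoalg_{\mathcal{R}}$. If you verify (iii) and replace the ``generate a set $S$'' step by a direct appeal to the recognition theorem, your argument closes up.
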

\begin{proof}
The class of $\mathcal{R}$-homology equivalences is the inverse image of $\mathcal{W}_{\mathcal{R}}$, which is accessible and accessibly embedded in $\sCoalg_{\mathcal{R}}^{\to}$ by Proposition \ref{accessibility}, by the accessible functor $\RR^{\to}: \sPSh^{\to} \to \sCoalg_{\mathcal{R}}^{\to}$. Therefore it is is accessible and accessibly embedded in $\sPSh^{\to}$ by Proposition \ref{AR250}. The existence of the Bousfield localization follows from Theorem \ref{j.smith}: conditions (i), (ii) and (iv) are satisfied and (iii) is an easy consequence of the corresponding condition for $\sCoalg_{\mathcal{R}}$ and Proposition \ref{prop1}. Then it is clear that $\RR: \mathrm{L}_{\mathcal{R}}\sPSh_{\mathrm{inj}} \to \sCoalg_{\mathcal{R}}$ is a left Quillen functor. 
\end{proof}

As a consequence, there is a derived adjunction $$\mathbb{L}\RR: \mathrm{Ho}(\mathrm{L}_{\mathcal{R}}\sPSh_{\mathrm{inj}}) \rightleftarrows \mathrm{Ho}(\sCoalg_{\mathcal{R}}): \mathbb{R}\rho$$  between the $\mathcal{R}$-local homotopy category of simplicial presheaves and the homotopy category of simplicial $\mathcal{R}$-coalgebras.

\subsection{} Assume that $\mathcal{R}(U)$ has no non-trivial idempotents for all $U \in \Ob \mathcal{C}$. In this case, the unit transformation $1 \to \rho \RR$ is a natural isomorphism. The adjunction $(\RR, \rho)$ can be used to produce a new model category structure on $\sCoalg_{\mathcal{R}}$. We will need the following elementary lemma. 

\begin{lemma} \label{key2}
Let $i: X \hookrightarrow Y$ be a monomorphism in $\sPSh$. If 
\begin{equation} \label{pushout-lemma}
\xymatrix{
\mathcal{R}\{X\} \ar[r] \ar[d]^{\mathcal{R}\{i\}} & A \ar[d]^{j} \\
\mathcal{R}\{Y\} \ar[r] & C 
}
\end{equation}
is a pushout square in $\sCoalg_{\mathcal{R}}$, then the adjoint square 
\begin{displaymath}
\xymatrix{
X \ar[r] \ar[d]^{i} & \rho(A) \ar[d]^{\rho(j)} \\
Y \ar[r] & \rho(C) 
}
\end{displaymath}
is a pushout in $\sPSh$.
\end{lemma}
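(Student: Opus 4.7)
The plan is to verify the pushout property by checking sectionwise and at each simplicial level, reducing to a computation about coalgebras over a single commutative ring $R = \mathcal{R}(U)$ with no non-trivial idempotents. The essential ingredient is that $\Phi: \sCoalg_{\mathcal{R}} \to \sMod_{\mathcal{R}}$ is a left adjoint (Proposition \ref{cofreedom}) and so preserves pushouts, while pushouts in $\sMod_{\mathcal{R}}$ and in $\sPSh$ are computed sectionwise and levelwise.

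First I would fix $U \in \Ob \mathcal{C}$ and $n \geq 0$, write $R = \mathcal{R}(U)$, and choose a set-theoretic decomposition $Y(U)_n = X(U)_n \sqcup Y'_n$, available because $i$ is a monomorphism. Then $R\{Y(U)_n\} \cong R\{X(U)_n\} \oplus R\{Y'_n\}$ as $R$-modules, and applying $\Phi$ to the pushout square \eqref{pushout-lemma} identifies the underlying $R$-module of $C(U)_n$ with $A(U)_n \oplus R\{Y'_n\}$. The coalgebra structure on $C(U)_n$ is pinned down on each summand: $\mu_C$ and $\epsilon_C$ restrict to the coalgebra structure of $A(U)_n$ on the first factor, and each new generator $y \in Y'_n$ remains grouplike and counital in $C(U)_n$ because $\mathcal{R}\{Y\} \to C$ is a map of coalgebras.

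The heart of the argument is then to use the no-idempotents hypothesis to identify $\rho(C)(U)_n$ explicitly. Given $c = a + \sum_{y \in Y'_n} \alpha_y y$ with $\mu_C(c) = c \otimes c$ and $\epsilon_C(c) = 1$, decomposing $C(U)_n \otimes_R C(U)_n$ into the four summands corresponding to $A \oplus R\{Y'_n\}$ splits this into separate equations. The $R\{Y'_n\} \otimes R\{Y'_n\}$ component forces each $\alpha_y$ to be an idempotent in $R$ with the $\alpha_y$ pairwise orthogonal, while the mixed components force $\alpha_y \cdot a = 0$ for every $y$. Since $R$ has no non-trivial idempotents, each $\alpha_y \in \{0,1\}$ and at most one of them is nonzero; combining with the counit equation $\epsilon_A(a) + \sum_y \alpha_y = 1$, either $c = a$ lies in $\rho(A)(U)_n$, or $c = y_0$ for a unique $y_0 \in Y'_n$. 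Hence $\rho(C)(U)_n = \rho(A)(U)_n \sqcup Y'_n$, which is exactly the pushout of $\rho(A)(U)_n \leftarrow X(U)_n \hookrightarrow Y(U)_n$ in sets.

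Finally, this bijection is natural in $U$ and in $n$ because it is induced by the canonical maps $\rho(A) \to \rho(C)$ and $Y \cong \rho(\mathcal{R}\{Y\}) \to \rho(C)$ already furnished by the adjoint square, so one obtains an isomorphism of simplicial presheaves realizing $\rho(C)$ as the desired pushout. The main obstacle is the middle step: ruling out grouplike, counital elements of $C$ that mix a nontrivial contribution from $A$ with a nontrivial combination of the new generators from $Y'_n$, which is precisely where the no-non-trivial-idempotents hypothesis on $\mathcal{R}$ enters.
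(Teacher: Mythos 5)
Your proof is correct and follows essentially the same route as the paper: reduce sectionwise and levelwise to a single ring $R$, identify $C$ with the coalgebra direct sum $A \oplus \bigoplus_{y \in Y - X} R_y$, and compute the $R$-points. The only difference is that the paper leaves the final step (``it is easy to check that the $R$-points of $C$ is the disjoint union of the $R$-points of $A$ with $Y - X$'') unproved, whereas you carry out the grouplike/idempotent computation explicitly -- which is exactly where the no-non-trivial-idempotents hypothesis is used.
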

\begin{proof}
Since pushouts are computed pointwise, it suffices to check this in the case of a pushout diagram \eqref{pushout-lemma} in $\Coalg_R$ where $R$ is a single commutative ring and $R\{-\}: \mathrm{Set} \to \Coalg_{R}$. In this case, the $R$-coalgebra $C$ is isomorphic to the direct sum of the $R$-coalgebra $A$ with $\oplus_{\alpha \in Y - X} R_{\alpha}$ where each $R_{\alpha}$ is isomorphic to $R$ regarded as an $R$-coalgebra. Then it is easy to check that the $R$-points of $C$ is the disjoint union of the $R$-points of $A$ with the set $Y - X$. 
\end{proof}

A morphism $f: A \to B$ in $\sCoalg_{\mathcal{R}}$ is called a $\rho$-weak equivalence (resp. $\rho$-fibration) if the map $\rho(f)$ is a local weak equivalence (resp. global fibration, i.e., a fibration in the model category $\sPSh_{\mathrm{inj}}$). Let $\mathcal{W}^{\rho}_{\mathcal{R}}$ and $\mathrm{Fib}^{\rho}_{\mathcal{R}}$ denote the classes of $\rho$-weak equivalences and $\rho$-fibrations respectively, and let $\mathrm{Cof}^{\rho}_{\mathcal{R}}$ denote the class of $\rho$-cofibrations, that is, morphisms that have the left lifting property with respect to all maps that are both $\rho$-weak equivalences and $\rho$-fibrations.

\begin{theorem} \label{rho}
There is a proper, simplicial, cofibrantly generated model category $\sCoalg^{\rho}_{\mathcal{R}}$ whose underlying category is $\sCoalg_{\mathcal{R}}$ and the weak equivalences, fibrations and cofibrations are defined by the classes $\mathcal{W}^{\rho}_{\mathcal{R}}$, $\mathrm{Fib}^{\rho}_{\mathcal{R}}$ and $\mathrm{Cof}^{\rho}_{\mathcal{R}}$ respectively.  Moreover, the adjunction $$\RR: \sPSh_{\mathrm{inj}} \rightleftarrows \sCoalg^{\rho}_{\mathcal{R}}: \rho$$ is a Quillen equivalence.
\end{theorem}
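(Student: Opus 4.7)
The plan is to transfer the injective model structure from $\sPSh_{\mathrm{inj}}$ to $\sCoalg_{\mathcal{R}}$ along the adjunction $(\RR,\rho)$ and to verify the result via Smith's recognition theorem (Theorem \ref{j.smith}) applied to the generating set $\mathcal{I}^{\rho} := \RR(\mathcal{I}_{\mathrm{inj}})$, where $\mathcal{I}_{\mathrm{inj}}$ is the set of monomorphisms between $\kappa$-presentable simplicial presheaves used in the proof of Proposition \ref{prop1}. By adjunction, $f \in \mathcal{I}^{\rho}\text{-inj}$ if and only if $\rho(f)$ is a trivial fibration in $\sPSh_{\mathrm{inj}}$, so such $f$ automatically lie in $\mathcal{W}^{\rho}_{\mathcal{R}}$. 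Conditions (i) and (ii) of Theorem \ref{j.smith} are therefore immediate. For condition (iv), $\mathcal{W}^{\rho}_{\mathcal{R}}$ is the preimage of the class of local weak equivalences under the accessible arrow functor $\rho^{\to}$, so Proposition \ref{AR250} applies, using that the local weak equivalences form an accessible, accessibly embedded subcategory of $\sPSh^{\to}$.

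The main content is condition (iii), the closure of $\mathrm{Cof}(\mathcal{I}^{\rho}) \cap \mathcal{W}^{\rho}_{\mathcal{R}}$ under pushouts and transfinite compositions. The essential tool is Lemma \ref{key2}: applying $\rho$ to a pushout in $\sCoalg_{\mathcal{R}}$ along $\RR(i)$, for $i$ a monomorphism in $\sPSh$, yields the pushout of $i$ in $\sPSh$. By transfinite induction, together with the fact that $\rho$ preserves sufficiently filtered colimits (as it is accessible), $\rho$ transforms any relative $\mathcal{I}^{\rho}$-cell complex into a relative cell complex of monomorphisms in $\sPSh_{\mathrm{inj}}$; condition (iii) then reduces to the corresponding closure of trivial cofibrations in $\sPSh_{\mathrm{inj}}$. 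Right properness follows from $\rho$ preserving pullbacks (being a right adjoint) together with right properness of $\sPSh_{\mathrm{inj}}$; left properness is established by the same iterated Lemma \ref{key2} argument used in (iii). The simplicial structure is inherited from Theorem A and is compatible with the transferred structure because $\RR$ is a simplicial functor (strong monoidal from $(\sPSh,\times)$ to $(\sCoalg_{\mathcal{R}},\otimes)$), so the pushout-product axiom for $\sCoalg^{\rho}_{\mathcal{R}}$ can be checked on $\RR(\mathcal{I}_{\mathrm{inj}})$ and reduced to the pushout-product axiom in $\sPSh_{\mathrm{inj}}$.

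For the Quillen equivalence, the standing assumption that $\mathcal{R}(U)$ has no non-trivial idempotents makes the unit $\eta: 1 \to \rho\RR$ a natural isomorphism. Consequently $\RR$ preserves weak equivalences (since $\rho\RR(f) \cong f$ for every $f$) and is fully faithful, so the induced functor on homotopy categories is fully faithful. Essential surjectivity follows by showing that the counit $\epsilon_A: \RR\rho(A) \to A$ is a $\rho$-weak equivalence for every $A$: the triangle identity $\rho(\epsilon_A) \circ \eta_{\rho(A)} = \mathrm{id}_{\rho(A)}$ together with invertibility of $\eta$ force $\rho(\epsilon_A)$ to be the identity. Hence every object of $\sCoalg^{\rho}_{\mathcal{R}}$ is weakly equivalent to one in the image of $\RR$, completing the Quillen equivalence. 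The principal obstacle throughout is condition (iii) of Smith's theorem, which is precisely where the interplay between the coalgebraic colimit structure and the presheaf-level homotopy theory enters via Lemma \ref{key2}.
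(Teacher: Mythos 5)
Your overall strategy is the paper's: transfer the structure from $\sPSh_{\mathrm{inj}}$ along $(\RR,\rho)$, with Lemma \ref{key2} (iterated through cell attachments and transfinite compositions, using that $\rho$ preserves filtered colimits) as the key input for the acyclicity condition and for left properness, right properness from $\rho$ preserving pullbacks, and the pushout--product axiom checked on generators using that $\RR$ takes $i\square j$ in $\sPSh$ to the pushout product in $\sCoalg_{\mathcal{R}}$. Two points, however, need repair. First, the paper applies the transfer theorem \cite[Theorem 11.3.2]{Hi} with generating sets $\RR(\mathcal{I}_{\mathrm{inj}})$ and $\RR(\mathcal{J}_{\mathrm{inj}})$, so that by adjunction the fibrations are exactly $\rho^{-1}(\mathrm{Fib}(\sPSh_{\mathrm{inj}}))=\mathrm{Fib}^{\rho}_{\mathcal{R}}$, as the statement requires. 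Your route through Theorem \ref{j.smith} produces a model structure whose fibrations are $(\mathrm{Cof}(\mathcal{I}^{\rho})\cap\mathcal{W}^{\rho}_{\mathcal{R}})\text{-}\mathrm{inj}$, and you never identify this class with $\mathrm{Fib}^{\rho}_{\mathcal{R}}$. The identification is true but must be argued: the inclusion $\RR(\mathcal{J}_{\mathrm{inj}})\subseteq \mathrm{Cof}(\mathcal{I}^{\rho})\cap\mathcal{W}^{\rho}_{\mathcal{R}}$ gives one containment, and the reverse follows from the retract argument applied to the $(\RR(\mathcal{J}_{\mathrm{inj}})\text{-cell},\ \RR(\mathcal{J}_{\mathrm{inj}})\text{-inj})$ factorization, noting that a map which is both in $\mathcal{W}^{\rho}_{\mathcal{R}}$ and in $\RR(\mathcal{J}_{\mathrm{inj}})\text{-inj}$ lies in $\RR(\mathcal{I}_{\mathrm{inj}})\text{-inj}$ by adjunction.

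Second, the inference ``$\RR$ is fully faithful and preserves weak equivalences, hence $\mathbb{L}\RR$ is fully faithful'' is not a valid general principle (the inclusion of a full subcategory preserving weak equivalences need not be homotopically fully faithful). The correct argument, which is the one the paper gives and which uses only ingredients you already have, is that every object of $\sPSh_{\mathrm{inj}}$ is cofibrant and the derived unit $X\cong\rho\RR\{X\}\to\rho\bigl((\RR\{X\})^{f}\bigr)$ is a weak equivalence because $\RR\{X\}\to(\RR\{X\})^{f}$ is a $\rho$-weak equivalence and $\rho$ preserves weak equivalences by the very definition of $\mathcal{W}^{\rho}_{\mathcal{R}}$; since $\rho$ also reflects weak equivalences (again by definition), this already yields the Quillen equivalence by the standard criterion. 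Your counit computation ($\rho(\epsilon_A)=\eta_{\rho(A)}^{-1}$ is an isomorphism, so $\epsilon_A$ is a $\rho$-weak equivalence) is correct and gives an alternative, equally short, verification of the derived counit condition.
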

\begin{proof} Using the standard method of transferring a model category structure along an adjunction \cite[Theorem 11.3.2]{Hi}, it suffices to check that for every pushout diagram in $\sCoalg_{\mathcal{R}}$
\begin{displaymath}
\xymatrix{
\mathcal{R}\{X\} \ar[r] \ar[d]^{\mathcal{R}\{i\}} & A \ar[d]^{j} \\
\mathcal{R}\{Y\} \ar[r] & C
}
\end{displaymath}
where $i:X \stackrel{\sim}{\hookrightarrow} Y$ is a trivial cofibration, then the morphism $j$ is $\rho$-weak equivalence. But this follows directly by Lemma \ref{key2}. For generating sets of cofibrations and trivial cofibrations, we can choose $\mathcal{R}\{\mathcal{I}_{\mathrm{inj}}\}$ and $\mathcal{R}\{\mathcal{J}_{\mathrm{inj}}\}$ respectively, where $\mathcal{I}_{\mathrm{inj}}$ and $\mathcal{J}_{\mathrm{inj}}$ denote generating sets of $\sPSh_{\mathrm{inj}}$.

We show that $\sCoalg^{\rho}_{\mathcal{R}}$ is a simplicial model category. Let $I= \{ \partial \Delta^n \hookrightarrow \Delta^n | n \geq 0 \}$ and $J = \{ \Lambda^n_k \hookrightarrow \Delta^n | 0 \leq k \leq n \}$ be the standard generating sets of cofibrations and trivial cofibrations of $\sSet$. The simplicial structure is the same as that of Theorem A, i.e., it is defined by 
$$\otimes : \sSet \times \sCoalg^{\rho}_{\mathcal{R}} \to \sCoalg^{\rho}_{\mathcal{R}}$$ 
$$K \otimes A = \mathcal{R}\{\underline{K}\} \otimes A.$$ 
By \cite[Corollary 4.2.5]{Ho}, it suffices to show that the pushout products in $I \Box \mathcal{R}\{\mathcal{I}_{\mathrm{inj}}\}$ are $\rho$-cofibrations and those in $J \Box \mathcal{R}\{\mathcal{I}_{\mathrm{inj}}\}$ and $I \square \mathcal{R}\{\mathcal{J}_{\mathrm{inj}}\}$ are $\rho$-weak equivalences. The pushout product of $i: K \to L$ with $f: A \to B$ is the canonical morphism $$i \square f : K \otimes B \cup_{K \otimes A} L \otimes A \to L \otimes B.$$
The pushout product of $i: K \hookrightarrow L$ in $\sSet$ with $\mathcal{R}\{j\}: \mathcal{R}\{X \} \hookrightarrow \mathcal{R}\{Y \}$ is $\mathcal{R}\{i \square j \}$, where $i \square j$ denotes the pushout 
product of $i$ and $j$ with respect to the simplicial structure of $\sPSh$. Thus the required result follows from the fact that $\sPSh_{\mathrm{inj}}$ is a simplicial model category. 

The model category $\sCoalg^{\rho}_{\mathcal{R}}$ right proper because $\sPSh_{\mathrm{inj}}$ is right proper and $\rho$ preserves pullbacks. Left properness follows easily from Lemma \ref{key2} and the fact that $\sPSh_{\mathrm{inj}}$ is left proper. 

Lastly we show that $\RR$ is a left Quillen equivalence. It suffices to check that the derived unit transformation 
is a natural isomorphism. This holds because $\RR \dashv \rho$ is a coreflection and $\rho$ preserves the weak equivalences by definition. More explicitly, the derived unit transformation of the Quillen adjunction is defined as follows: given an object $X$ of $\sPSh_{\mathrm{inj}}$, let $$g: \mathcal{R}\{X\} \stackrel{\sim}{\to} X^f$$ be a functorial fibrant replacement in $\sCoalg^{\rho}_{\mathcal{R}}$ obtained by an application of the small-object argument to the set of trivial cofibrations $\mathcal{R}\{\mathcal{J}_{\mathrm{inj}}\}$. Then the derived unit trasformation at $X$ can be represented by the map $$X \cong \rho \mathcal{R}\{X\} \stackrel{\rho(g)}{\longrightarrow} \rho(X^f).$$ By Lemma \ref{key2}, this is in $\mathcal{J}_{\mathrm{inj}}$-cell, so in particular it is a local weak equivalence. 
\end{proof}

\begin{proposition}
The identity functor $1: \sCoalg^{\rho}_{\mathcal{R}} \to \sCoalg_{\mathcal{R}}$ is a left Quillen functor between the model categories of Theorem \ref{rho} and Theorem A.
\end{proposition}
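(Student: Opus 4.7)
The plan is to verify the two defining properties of a left Quillen functor. Since the identity is its own adjoint on both sides, this reduces to checking that every $\rho$-cofibration is a cofibration of the Theorem A structure, and every $\rho$-trivial cofibration is a trivial cofibration of the Theorem A structure. Because both model structures are cofibrantly generated and the identity functor preserves pushouts, transfinite compositions and retracts, it suffices to verify the inclusion on the two generating sets supplied in the proof of Theorem \ref{rho}, namely $\mathcal{R}\{\mathcal{I}_{\mathrm{inj}}\}$ and $\mathcal{R}\{\mathcal{J}_{\mathrm{inj}}\}$.

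For the generating cofibrations, recall from the proof of Proposition \ref{prop1} that $\mathcal{I}_{\mathrm{inj}}$ can be taken to consist of monomorphisms between $\kappa$-presentable simplicial presheaves. For such an $i: X \hookrightarrow Y$, the underlying $\mathcal{R}$-module map of $\mathcal{R}\{i\}$ is at each section and simplicial degree a split monomorphism of free $\mathcal{R}(U)$-modules, so $\mathcal{R}\{i\}$ is a $\Phi$-monomorphism. Moreover, by the last paragraph of section \ref{background}, $\RR$ preserves $\kappa$-presentable objects, so $\mathcal{R}\{X\}$ and $\mathcal{R}\{Y\}$ are $\kappa$-presentable in $\sCoalg_{\mathcal{R}}$. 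Hence $\mathcal{R}\{i\}$ lies in the generating set $\mathcal{I}$ of the Theorem A structure, and in particular is a cofibration there.

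For the generating trivial cofibrations, one simply invokes Proposition \ref{prop1}: the adjunction $\RR \dashv \rho$ is already known to be a Quillen adjunction with respect to the Theorem A structure, hence the image under $\RR$ of any trivial cofibration in $\sPSh_{\mathrm{inj}}$ is a trivial cofibration in $\sCoalg_{\mathcal{R}}$. This takes care of $\mathcal{R}\{\mathcal{J}_{\mathrm{inj}}\}$ and completes the verification. There is no real obstacle to overcome here; all the substantive work was carried out in establishing Theorem A, Proposition \ref{prop1} and Theorem \ref{rho}, and the present statement merely records that the generating sets chosen for $\sCoalg^{\rho}_{\mathcal{R}}$ sit, on the nose, inside the respective classes of cofibrations and trivial cofibrations of the Theorem A structure.
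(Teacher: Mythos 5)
Your proof is correct and follows essentially the same route as the paper: reduce to the generating sets $\mathcal{R}\{\mathcal{I}_{\mathrm{inj}}\}$ and $\mathcal{R}\{\mathcal{J}_{\mathrm{inj}}\}$, observe that the former consists of $\Phi$-monomorphisms between $\kappa$-presentable objects (hence lies in the generating set $\mathcal{I}$ of Theorem A), and that the latter consists of cofibrations that are local weak equivalences. Citing Proposition \ref{prop1} for the trivial cofibrations is just a compressed form of the paper's own remark that every morphism in $\mathcal{R}\{\mathcal{J}_{\mathrm{inj}}\}$ is a local weak equivalence, so there is no substantive difference.
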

\begin{proof}
Let $\mathcal{I}_{\mathrm{inj}}$ be the generating set of monomorphisms in $\sPSh$ that consists of the monomorphisms between $\kappa$-presentable objects and $\mathcal{J}_{\mathrm{inj}}$ a generating set of trivial cofibrations. Then $\mathcal{R}\{\mathcal{I}_{\mathrm{inj}}\}$ and $\mathcal{R}\{\mathcal{J}_{\mathrm{inj}}\}$ are generating sets of cofibrations and trivial cofibrations for $\sCoalg^{\rho}_{\mathcal{R}}$. Every morphism in $\mathcal{R}\{\mathcal{I}_{\mathrm{inj}}\}$ is a $\Phi$-monomorphism between  $\kappa$-presentable objects and therefore the identity functor preserves cofibrations. Every morphism in $\mathcal{R}\{\mathcal{J}_{\mathrm{inj}}\}$ is a local weak equivalence. Hence it follows that $1: \sCoalg^{\rho}_{\mathcal{R}} \to \sCoalg_{\mathcal{R}}$ is a left Quillen functor.
\end{proof}

\begin{remark}
By Lemma \ref{key2}, it is easy to see that the cofibrant objects in $\sCoalg^{\rho}_{\mathcal{R}}$ are exactly the objects of the form $\mathcal{R}\{X\}$ for some simplicial presheaf $X$. Thus a cofibrant replacement functor in $\sCoalg^{\rho}_{\mathcal{R}}$ is given by the counit transformation of the adjunction $(\RR, \rho)$, i.e., the natural morphism $\mathcal{R}\{\rho(A)\} \to A$ is a cofibrant replacement of the simplicial $\mathcal{R}$-coalgebra $A$.
\end{remark}

\subsection{} By Proposition \ref{cofreedom}, the forgetful functor $\Phi: \sCoalg_{\mathcal{R}} \to \sMod_{\mathcal{R}}$ is a left adjoint. It is a left Quillen functor as long as there is a model category 
structure on $\sMod_{\mathcal{R}}$ where the weak equivalences are the local weak equivalences of the inderlying simplicial presheaves and there are enough cofibrations. The following theorem is undoubtly well-known to the experts
but we were not able to find an exact reference for it in the literature. 

\begin{theorem} \label{modules}
There is a proper, simplicial, cofibrantly generated model category structure on $\sMod_{\mathcal{R}}$ where the cofibrations are the monomorphisms and the weak equivalences are the local weak equivalences of the underlying simplicial presheaves. 
\end{theorem}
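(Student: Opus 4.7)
The plan is to apply Smith's recognition theorem (Theorem \ref{j.smith}) with $\mathcal{W}$ the class of local weak equivalences in $\sMod_{\mathcal{R}}$ and $\mathcal{I}$ a set of representatives for monomorphisms between $\kappa$-presentable simplicial $\mathcal{R}$-modules, for a sufficiently large regular cardinal $\kappa$. By \cite[Proposition 1.12]{Be}, together with standard rank bounds (as used in Proposition \ref{prop1}), one has $\mathrm{Cof}(\mathcal{I})$ equal to the class of all monomorphisms. The 2-out-of-3 property (condition (i)) is immediate. Condition (iv) is verified exactly as in Proposition \ref{accessibility}: $\mathcal{W}$ is the preimage, under the accessible forgetful functor $\sMod_{\mathcal{R}} \to \sPSh$, of the accessible and accessibly embedded class of local weak equivalences of simplicial presheaves, so Proposition \ref{AR250} applies.

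For condition (ii), I would argue that any $q: X \to Y$ in $\mathcal{I}$-$\inj$ has the right lifting property against every monomorphism in $\sMod_{\mathcal{R}}$. Since the generating cofibrations $\mathcal{I}^{\mathrm{proj}}_{\mathcal{R}} = \mathcal{R}\{\mathcal{I}_{\mathcal{C}}\}$ of $\sMod^{\mathrm{proj}}_{\mathcal{R}}$ are all monomorphisms, $q$ is a trivial fibration in $\sMod^{\mathrm{proj}}_{\mathcal{R}}$, hence a sectionwise weak equivalence by Proposition \ref{trivialfib}, and in particular a local weak equivalence.

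The main obstacle is condition (iii): closure of $\mathcal{W} \cap \{\text{monomorphisms}\}$ under pushouts and transfinite compositions. Here one exploits the abelian structure of $\sMod_{\mathcal{R}}$ through the following criterion: a monomorphism $i: A \hookrightarrow B$ is a local weak equivalence if and only if the cokernel $B/A$ is \emph{locally acyclic}, meaning $\pi_n(\wp^* L^2(B/A)(b)) = 0$ for all $n \geq 0$ and all $b \in \mathcal{B}$. Indeed, $\wp^* L^2$ is exact (being the inverse image of a geometric morphism), so it carries the short exact sequence $0 \to A \to B \to B/A \to 0$ to a short exact sequence at each $b \in \mathcal{B}$; this exhibits $\wp^*L^2(A)(b)$ as the fiber of a Kan fibration of simplicial abelian groups, and the long exact sequence in homotopy together with the five lemma yields the equivalence. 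In the abelian category $\sMod_{\mathcal{R}}$, the pushout of a monomorphism along any map is again a monomorphism with isomorphic cokernel, so pushouts of trivial monomorphisms are trivial monomorphisms. For a transfinite composition $A_0 \hookrightarrow A_1 \hookrightarrow \cdots \hookrightarrow \mathrm{colim}_\lambda A_\lambda$ of trivial monomorphisms, the cokernel of $A_0 \hookrightarrow \mathrm{colim}_\lambda A_\lambda$ is the filtered colimit $\mathrm{colim}_\lambda (A_\lambda / A_0)$ of locally acyclic objects, which is itself locally acyclic because $\wp^* L^2$ preserves colimits and $\pi_*$ commutes with filtered colimits of simplicial abelian groups.

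Left properness follows directly from the cokernel argument of the previous paragraph, and right properness is inherited from Jardine's $\sMod^{\mathrm{global}}_{\mathcal{R}}$: the two model structures have the same class of weak equivalences, and every cofibration of $\sMod^{\mathrm{global}}_{\mathcal{R}}$ is a monomorphism, hence a cofibration in our structure, so every fibration in the new structure is a global fibration. Finally, the simplicial enrichment is defined by $K \otimes M := \mathcal{R}\{\underline{K}\} \otimes_{\mathcal{R}} M$, exactly as for Theorem A. The pushout-product axiom is verified by observing that $\mathcal{R}\{\underline{K}\}$ is levelwise free (hence flat), so tensoring with a monomorphism $\mathcal{R}\{\underline{K}\} \hookrightarrow \mathcal{R}\{\underline{L}\}$ preserves both monomorphisms and short exact sequences; the trivial case then reduces, via the cokernel criterion, to the analogous statement for $\sPSh_{\mathrm{inj}}$ being a simplicial model category.
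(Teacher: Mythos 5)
Your proposal is correct and follows the same skeleton as the paper: Smith's recognition theorem (Theorem \ref{j.smith}) with the set of monomorphisms between $\kappa$-presentable objects as generating cofibrations, accessibility of $\mathcal{W}$ pulled back along the accessible forgetful functor via Proposition \ref{AR250}, and condition (ii) deduced from $\mathcal{I}^{\mathrm{proj}}_{\mathcal{R}}\subseteq\mathrm{Mono}$ together with Proposition \ref{trivialfib}. Where you genuinely diverge is in condition (iii), properness, and SM7. The paper handles (iii) and left properness by pushing everything through the Boolean localization $\wp^*L^2$ and invoking the gluing lemma for monomorphisms of simplicial sets at each $b\in\mathcal{B}$, and it obtains SM7 for free from Jardine's result that $\sMod^{\mathrm{global}}_{\mathcal{R}}$ is simplicial, using the cotensor form of the axiom (every fibration of the new structure is a global fibration). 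You instead exploit the abelian structure via the cokernel criterion: a monomorphism is a local weak equivalence iff its cokernel is locally acyclic, so pushouts preserve trivial monomorphisms because they preserve cokernels, and transfinite compositions reduce to filtered colimits of locally acyclic objects. This is cleaner in that it avoids asking whether evaluation at $b$ preserves pushouts, and it gives left properness by a five-lemma argument on the map of short exact sequences; the price is that you implicitly use two nontrivial facts that deserve a citation: that epimorphisms of sheaves of abelian groups on a complete Boolean algebra are sectionwise epimorphisms (needed for your claim that $\wp^*L^2$ of the short exact sequence is sectionwise exact at each $b$ --- alternatively, run the long exact sequence with sheaves of homotopy groups), and, for the acyclicity half of the pushout-product axiom, a local K\"unneth/Eilenberg--Zilber argument showing that tensoring a locally acyclic object with the sectionwise levelwise free module $\mathcal{R}\{\underline{L}\}/\mathcal{R}\{\underline{K}\}$ preserves local acyclicity. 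The latter does not literally ``reduce to $\sPSh_{\mathrm{inj}}$ being simplicial'' as you state, but it is true and standard; the paper's detour through $\sMod^{\mathrm{global}}_{\mathcal{R}}$ is precisely how it sidesteps this computation.
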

\begin{proof}
The proof will follow the method of Theorem \ref{j.smith}.  Let us denote again by $\mathcal{W}_{\mathcal{R}}$ the class of local weak equivalences in $\sMod_{\mathcal{R}}$. The class of local weak equivalences in $\sPSh$ is accessible and accessibly embedded in $\sPSh_{\mathrm{inj}}^{\to}$ by Theorem \ref{j.smith}. The forgetful functor $\iota: \sMod_{\mathcal{R}} \to \sPSh$ is accessible, therefore condition (iv) of Theorem \ref{j.smith} holds by Proposition \ref{AR250}. The class of monomorphisms $\mathrm{Mono}$ in $\sMod_{\mathcal{R}}^{\to}$ is cofibrantly generated by a set of monomorphisms. This is more generally true in every Grothendieck abelian category, see \cite[Proposition 1.12, Remark 1.13]{Be}. Next we show that the class $\mathrm{Mono} \cap \mathcal{W}_{\mathcal{R}}$ is closed under pushouts and transfinite compositions. The closure under transfinite compositions is obvious (since those can be computed in $\sPSh_{\mathrm{inj}}$), so it suffices to show that for every pushout square 
\begin{displaymath}
\xymatrix{
A \ar[d]^j \ar[r] & X \ar[d]^f \\
B \ar[r] & Y
}  
\end{displaymath}
where $j \in \mathrm{Mono} \cap \mathcal{W}_{\mathcal{R}}$, then $f \in \mathcal{W}_{\mathcal{R}}$. Let $\wp: \mathrm{Sh}(\mathcal{B}) \rightarrow \Sh$ be a Boolean localization of $\Sh$. Recall the definition of the 
local weak equivalences from section \ref{recollect} and the terminology used there. There is
a pushout diagram
\begin{displaymath}
\xymatrix{
\wp^* L^2(A)(b) \ar[d]^{j(b)} \ar[r] & \wp L^2(X)(b) \ar[d]^{f(b)} \\
\wp^*L^2(B)(b) \ar[r] & \wp^*L^2(Y)(b)
}  
\end{displaymath}
for all $b \in \mathcal{B}$. Since $\wp^*$ and $L^2$ are geometric morphisms, they preserve monomorphisms, so $j(b)$ is a monomorphism. It is also a weak equivalence of simplicial sets by assumption. Then it follows that $f(b)$ is also a weak equivalence and so condition (iii) of Theorem \ref{j.smith} follows. 

It remains to verify condition (ii) of Theorem \ref{j.smith}. Let $\mathcal{I}^{\mathrm{proj}}_{\mathcal{R}} = \mathcal{R}\{\mathcal{I}_{\mathcal{C}}\}$ be the generating set of cofibrations for $\sMod^{\mathrm{proj}}_{\mathcal{R}}$ as defined in section \ref{recollect}. Clearly $\mathcal{I}^{\mathrm{proj}}_{\mathcal{R}} \subseteq \mathrm{Mono}$, so $\mathrm{Mono-inj} \subseteq \mathcal{I}^{\mathrm{proj}}_{\mathcal{R}}$-inj. But if $f \in \mathcal{I}^{\mathrm{proj}}_{\mathcal{R}}$-inj then  $f$ is a local weak equivalence, so (ii) follows. Hence by Theorem \ref{j.smith}, there is a cofibrantly generated model category, denoted by $\sMod^{\mathrm{inj}}_{\mathcal{R}}$, as required. 

This model category is right proper because $\sPSh_{\mathrm{inj}}$ is right proper and the forgetful functor $$\iota: \sMod^{\mathrm{inj}}_{\mathcal{R}} \to \sPSh_{\mathrm{inj}}$$ is a right Quillen functor. The proof that it also left proper is similar with the arguments above based on Boolean localization. 

It remains to show that the model category is also simplicial. Let $i: K \hookrightarrow L$ be an inclusion of simplicial sets and $p: M \to N$ a fibration in $\sMod^{\mathrm{inj}}_{\mathcal{R}}$. Since $p$ is also a fibration in $\sMod^{\mathrm{global}}_{\mathcal{R}}$, which is a simplicial model category by \cite[Lemma 2.2]{Ja3}, the canonical map $$ M^L \to N^L \underset{N^K}{\times} M^K$$ is a Kan fibration and it is trivial if either $i$ or $p$ is trivial. This concludes the proof of the theorem.
\end{proof}

\begin{remark} \label{proj}
It is clear from the proof that for every set of monomorphisms $\mathrm{I}$ in $\sMod_{\mathcal{R}}$ such that $\mathrm{I - inj} \subseteq \mathcal{W}_{\mathcal{R}}$, there is a left proper, simplicial, cofibrantly generated model category structure on $\sMod_{\mathcal{R}}$ with class of cofibrations $\mathrm{Cof(I)}$ and weak equivalences $\mathcal{W}_{\mathcal{R}}$. It is also right proper if
$\mathcal{R}\{i\}: \mathcal{R}\{X\} \to \mathcal{R}\{Y \}$ is in $\mathrm{Cof(I)}$ for every projective cofibration $i: X \to Y$ of simplicial presheaves.
\end{remark} 

The following proposition is now obvious. 

\begin{proposition}
The forgetful functor $\Phi: \sCoalg_{\mathcal{R}} \to \sMod^{\mathrm{inj}}_{\mathcal{R}}$ is a left Quillen functor.
\end{proposition}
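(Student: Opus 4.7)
The plan is simply to unwind the definitions and invoke the characterizations of weak equivalences and (generating) cofibrations established earlier. Both model categories in question are cofibrantly generated, and $\Phi$ is a left adjoint by Proposition \ref{cofreedom}, so it suffices to verify that $\Phi$ sends the generating cofibrations of $\sCoalg_{\mathcal{R}}$ (from Theorem A) to cofibrations in $\sMod^{\mathrm{inj}}_{\mathcal{R}}$ and preserves weak equivalences.

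First I would observe that weak equivalences are preserved essentially tautologically. A weak equivalence $f$ in $\sCoalg_{\mathcal{R}}$ is, by the definition of $\mathcal{W}_{\mathcal{R}}$, a map whose underlying morphism of simplicial presheaves is a local weak equivalence; but the class of weak equivalences in $\sMod^{\mathrm{inj}}_{\mathcal{R}}$ supplied by Theorem \ref{modules} is precisely the class of local weak equivalences of underlying simplicial presheaves. Hence $\Phi(f)$ is a weak equivalence in $\sMod^{\mathrm{inj}}_{\mathcal{R}}$ for every $f \in \mathcal{W}_{\mathcal{R}}$.

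Next I would verify that $\Phi$ preserves cofibrations. By Theorem A, the generating set $\mathcal{I}$ of cofibrations for $\sCoalg_{\mathcal{R}}$ consists of $\Phi$-monomorphisms between $\kappa$-presentable simplicial $\mathcal{R}$-coalgebras. By the very definition of a $\Phi$-monomorphism, the image under $\Phi$ of any map in $\mathcal{I}$ is a monomorphism in $\sMod_{\mathcal{R}}$, which is exactly a cofibration in $\sMod^{\mathrm{inj}}_{\mathcal{R}}$ by Theorem \ref{modules}. Since $\Phi$ is a left adjoint it preserves the colimits used in the small object argument (pushouts, transfinite compositions, and retracts), so it carries $\mathrm{Cof}(\mathcal{I})$ into the class of cofibrations in $\sMod^{\mathrm{inj}}_{\mathcal{R}}$.

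Combining these two points, $\Phi$ preserves both cofibrations and trivial cofibrations, and therefore it is a left Quillen functor; its right adjoint $\mathbb{T}: \sMod^{\mathrm{inj}}_{\mathcal{R}} \to \sCoalg_{\mathcal{R}}$ is the cofree coalgebra functor provided by Proposition \ref{cofreedom}. There is essentially no obstacle here: the content of the statement was absorbed into the construction of the generating cofibrations in Theorem A, where we deliberately chose the $\Phi$-monomorphisms so that this comparison would be formal.
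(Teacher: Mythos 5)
Your argument is correct and is precisely the reasoning the paper has in mind when it declares the proposition ``now obvious'' after establishing Theorem \ref{modules}: the generating cofibrations of $\sCoalg_{\mathcal{R}}$ are $\Phi$-monomorphisms, hence land in the cofibrations (monomorphisms) of $\sMod^{\mathrm{inj}}_{\mathcal{R}}$, and weak equivalences on both sides are by definition the local weak equivalences of the underlying simplicial presheaves. No gaps; this matches the paper's approach.
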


\section{Proof of Theorem B} \label{ThmB}

\subsection{} We remind the reader of certain facts about the structure of coalgebras over a perfect field. For more details, see \cite{G}, \cite{Swe}.

Let $\mathbb{F}$ be a perfect field. An $\mathbb{F}$-coalgebra is called \textit{simple} if it has no non-trivial subcoalgebras. Every simple $\mathbb{F}$-coalgebra is finite dimensional and the dual $\mathbb{F}$-algebra is a finite field extension of $\mathbb{F}$. The \'{e}tale part $\acute{E}t(\textit{A})$ of an $\mathbb{F}$-coalgebra $A$ is the sum of all the simple subcoalgebras of $A$. This sum is known to be direct, see \cite[p. 166]{Swe}. According to the \textit{decomposition theorem} (see \cite[p.42]{Die}, \cite{G}), the inclusion 
\begin{equation*} 
\acute{E}t(A) \subseteq A
\end{equation*}
is a natural split monomorphism of coalgebras.  

If $\mathbb{F}$ is algebraically closed, then $\mathbb{F}$ is the unique simple $\mathbb{F}$-coalgebra up to isomorphism. The \'{e}tale part of an $\mathbb{F}$-coalgebra $A$ in this case can be identified with the 
canonical counit map $\mathbb{F}\{\rho(A)\} \to A$, that is, there is a natural isomorphism 
\begin{equation} \label{etale}
\mathbb{F}\{\rho(A)\} \stackrel{\cong}{\to} \acute{E}t(\textit{A}).
\end{equation}
Note of course that an arbitrary change of fields $\mathbb{F} \subseteq \mathbb{K}$ may give rise to $\mathbb{K}$-points of $A \otimes_{\mathbb{F}} \mathbb{K}$ that are not induced by $\mathbb{F}$-points 
of $A$. But any $\mathbb{K}$-point of $A \otimes_{\mathbb{F}} \mathbb{K}$ is already an 
$\overline{\mathbb{F}}$-point of $A \otimes_{\mathbb{F}} \overline{\mathbb{F}}$ where $\mathbb{F} \subseteq \overline{\mathbb{F}}$ denotes the algebraic closure (see \cite[Section 3]{Pa}). Therefore the isomorphism \eqref{etale} is natural with respect to field extensions of algebraically closed fields, that is, if $\mathbb{F} \subseteq \mathbb{K}$ are algebraically closed fields and $A$ is an $\mathbb{F}$-coalgebra, then there is a natural bijection $\rho(A) \stackrel{\cong}{\to} \rho(A \otimes_{\mathbb{F}} \mathbb{K})$.  

The isomorphism \eqref{etale} can be extended to a description of the \'{e}tale part of the $\mathbb{F}$-coalgebra $A$ in the general case where $\mathbb{F}$ is a perfect field. This is essentially a consequence of Galois theory. Let $\overline{\mathbb{F}}$ be the algebraic closure of $\mathbb{F}$ and $G$ the Galois group. The Galois group $G$ is regarded as a profinite group, so a $G$-action is always understood to be continuous. Recall that a $G$-action is continuous if and only if every element has a finite orbit. Let $\overline{A}:=A \otimes_{\mathbb{F}} \overline{\mathbb{F}}$ denote the associated $\overline{\mathbb{F}}$-coalgebra. The set of $\overline{\mathbb{F}}$-points of $\overline{A}$ generate the \'{e}tale part of $\overline{A}$ by \eqref{etale} above. Moreover, it is naturally a $G$-set. More explicitly, the $G$-action is defined as follows: given an $\overline{\mathbb{F}}$-point $f: \overline{\mathbb{F}} \to \overline{A}$ and $g \in G$, then $$(g \cdot f)(x) = (1 \otimes g)fg^{-1}(x).$$
In other words, there is a commutative diagram 
\begin{displaymath}
 \xymatrix{
\overline{\mathbb{F}} \ar[r]^f \ar[d]^g & \overline{A} \ar[d]^{1 \otimes g} \\
\overline{\mathbb{F}} \ar[r]^{g \cdot f} & \overline{A}
}
\end{displaymath}
The associated $\overline{\mathbb{F}}$-coalgebra $\overline{\mathbb{F}}\{\rho(\overline{A})\}$ is also naturally endowed with a $G$-action. The $G$-action is defined by the formula 
$$g(\sum x_i f_i) = \sum g(x_i) (g \cdot f_i).$$
This makes the canonical evaluation map 
$$\overline{\mathbb{F}}\{\rho(\overline{A})\} \to \overline{A}$$
$$\sum x_i f_i \mapsto \sum f_i(x_i)$$
invariant under the $G$-action. The \'{e}tale part $\acute{E}t(A)$ of $A$ is naturally isomorphic to the $G$-invariants of $\overline{\mathbb{F}}\{\rho(\overline{A})\}$, i.e., there is a natural isomorphism (see \cite[Proposition 2.8]{G}) \begin{equation} \label{etale2}
\overline{\mathbb{F}}\{\rho(\overline{A})\}^G \cong \acute{E}t(A).                                                    \end{equation}

More generally, if $X$ is a $G$-set, the $G$-invariants of the $\overline{\mathbb{F}}$-coalgebra $\overline{\mathbb{F}}\{X\}$ form naturally an $\mathbb{F}$-coalgebra. Let $\mathrm{Set}(G)$ denote the category of sets with a continuous $G$-action. This is a Grothendieck topos, e.g. see \cite[p. 596]{MM}. There is a well-defined functor $\overline{\mathbb{F}}\{-\}^G: \mathrm{Set}(G) \to \Coalg_{\mathbb{F}}$. This has a right adjoint that is defined
on objects by the formula $$\rho_G(A) = \Coalg_{\overline{\mathbb{F}}} (\overline{\mathbb{F}}, A \otimes_{\mathbb{F}} \overline{\mathbb{F}}).$$

\subsection{} We can now prove Theorem B. Let $\mathscr{F}$ be a presheaf of algebraically closed fields on $\mathcal{C}$. The main argument of the proof is in the following proposition.

\begin{proposition} \label{invariance-etale-splitting}
The functor $\rho: \sCoalg_{\mathscr{F}} \to \sPSh_{\mathrm{inj}}$ sends weak equivalences to $\mathscr{F}$-homology equivalences.
\end{proposition}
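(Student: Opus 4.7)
The strategy is to exhibit $\mathscr{F}\{\rho(f)\}$ as a retract of $f$ in the arrow category, and then invoke the retract-closure of local weak equivalences in $\sPSh$. This reduces the statement to a naturality question about the étale splitting of coalgebras over algebraically closed fields.

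The first step is to promote the identification \eqref{etale} to the presheaf setting. For each $U \in \Ob \mathcal{C}$ and each simplicial degree $n$, the $U$-section at level $n$ of $\mathscr{F}\{\rho(A)\}$ is $\mathscr{F}(U)\{\rho(A_n(U))\}$, and \eqref{etale} identifies this with the étale part of the $\mathscr{F}(U)$-coalgebra $A_n(U)$. In this way the counit of the adjunction $(\mathscr{F}\{-\},\rho)$ is identified, sectionwise and levelwise, with the étale split monomorphism
$$\mathscr{F}\{\rho(A)\} \cong \acute{E}t(A) \hookrightarrow A,$$
together with a chosen natural retraction coming from the decomposition theorem. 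I would then verify that these sectionwise retractions assemble into a morphism $A \to \mathscr{F}\{\rho(A)\}$ of simplicial $\mathscr{F}$-coalgebras. Compatibility with the simplicial structure maps is automatic, since face and degeneracy maps are coalgebra maps and coalgebra maps preserve étale parts. Compatibility with the restriction maps of $\mathscr{F}$ reduces, via the generalised notion of a coalgebra map recalled in section \ref{background}, to the naturality of \eqref{etale} with respect to inclusions of algebraically closed fields, which is observed explicitly in the body of the paper.

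Once this is in place, the rest is formal. Naturality of the inclusion $\iota_A:\mathscr{F}\{\rho(A)\} \hookrightarrow A$ together with its retraction $r_A$ supplies, for any $f:A \to B$ in $\sCoalg_{\mathscr{F}}$, a commutative diagram
$$\mathscr{F}\{\rho(f)\} \to f \to \mathscr{F}\{\rho(f)\}$$
in $\sCoalg_{\mathscr{F}}^{\to}$ whose composite is the identity. Every functor preserves retracts, so forgetting to $\sPSh^{\to}$ exhibits $\mathscr{F}\{\rho(f)\}$ as a retract of the underlying simplicial presheaf map of $f$. If $f$ is a weak equivalence in $\sCoalg_{\mathscr{F}}$ then by definition it is a local weak equivalence of the underlying simplicial presheaves; since the local weak equivalences are closed under retracts (being the weak equivalences of a model category), $\mathscr{F}\{\rho(f)\}$ is itself a local weak equivalence. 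This is precisely the statement that $\rho(f)$ is an $\mathscr{F}$-homology equivalence.

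The main technical point I expect to treat with care is the naturality verification in the first step: namely, that the étale sub-coalgebras and their retractions, defined sectionwise via the decomposition theorem, depend functorially on the restriction maps of the underlying presheaf $\mathscr{F}$ of algebraically closed fields. Once that bookkeeping is complete, the retract argument delivering the conclusion is immediate and model-theoretic.
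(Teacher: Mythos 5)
Your proposal is correct and follows essentially the same route as the paper: both identify $\mathscr{F}\{\rho(A)\}$ with the \'etale part of $A$ via \eqref{etale}, use the naturality of the \'etale splitting along extensions of algebraically closed fields to exhibit $\mathscr{F}\{\rho(f)\}$ as a retract of $f$ in the arrow category, and conclude by retract-closure of (local) weak equivalences. The extra bookkeeping you flag (sectionwise/levelwise assembly and compatibility with restriction maps) is exactly what the paper's one-line appeal to naturality is summarizing.
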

\begin{proof}
Let $f: A \to B$ be a weak equivalence of simplicial $\mathscr{F}$-coalgebras. The naturality of the splitting of the \'{e}tale part of coalgebras is respected along extensions of algebraically closed fields, so it follows that the map $$\mathscr{F}\{\rho(f)\}: \mathscr{F}\{\rho(A)\} \to \mathscr{F}\{ \rho(B)\}$$ is a retract of $f$ in the category of morphisms between simplicial $\mathscr{F}$-coalgebras. Since weak equivalences are closed under retracts, the required result follows. 
\end{proof}

To finish the proof of Theorem B, it suffices to show that the natural derived unit map $$X \to \mathbb{R}\rho (\mathscr{F}\{X\})$$ is a natural isomorphism in the $\mathscr{F}$-local homotopy category. This follows directly from the fact that both functors of the Quillen adjunction 
\begin{equation} \label{Q-adjunction3}
\mathscr{F}\{-\}: \mathrm{L}_{\mathscr{F}}\sPSh_{\mathrm{inj}} \rightleftarrows \sCoalg_{\mathscr{F}}: \rho
\end{equation}
preserve the weak equivalences, so the derived unit transformation is induced by the unit transformation of the coreflection \eqref{Q-adjunction3}. More explicitly, let $\mathscr{F}\{X\} \stackrel{\sim}{\to} \mathscr{F}\{X\}^{f}$ be a functorial fibrant replacement in $\sCoalg_{\mathscr{F}}$. The derived unit map of the Quillen adjunction can be represented by the natural map $$X \cong \rho (\mathscr{F} \{X \}) \to \rho (\mathscr{F}\{X\}^f).$$
By Proposition \ref{invariance-etale-splitting}, this is an $\mathscr{F}$-homology equivalence, so an isomorphism in the homotopy category of $\mathrm{L}_{\mathscr{F}} \sPSh_{\mathrm{inj}}$. This completes the proof of Theorem B.

The following is an immediate corollary.

\begin{corollary}
Let $X$ and $Y$ be simplicial presheaves in $\sPSh_{\mathrm{inj}}$ and $\mathscr{F}$ be a presheaf of algebraically closed fields. Then $X \cong Y$ in $\mathrm{Ho}(\mathrm{L}_{\mathscr{F}} \sPSh_{\mathrm{inj}})$ if and only if $\mathscr{F}\{ X \} \cong \mathscr{F}\{Y\}$ in $\mathrm{Ho}(\sCoalg_{\mathscr{F}})$.
\end{corollary}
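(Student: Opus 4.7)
The plan is to deduce both implications as formal consequences of Theorem B, using the observation that the left adjoint $\mathscr{F}\{-\}: \sPSh \to \sCoalg_{\mathscr{F}}$ sends $\mathscr{F}$-homology equivalences to weak equivalences \emph{by definition} of the former class. This is stronger than the usual left Quillen preservation of trivial cofibrations: it means $\mathscr{F}\{-\}$ descends directly to a functor $\mathrm{Ho}(\mathrm{L}_{\mathscr{F}}\sPSh_{\mathrm{inj}}) \to \mathrm{Ho}(\sCoalg_{\mathscr{F}})$ without any need for cofibrant replacement, and that this direct descent is canonically isomorphic to the total left derived functor $\mathbb{L}\mathscr{F}\{-\}$ of Theorem B.

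For the forward implication, I would simply invoke that any functor between categories preserves isomorphisms, applied to the descended functor just described. For the reverse implication, I would first make the identification with $\mathbb{L}\mathscr{F}\{-\}$ explicit: given a cofibrant replacement $q_X: Q X \stackrel{\sim}{\to} X$ in $\mathrm{L}_{\mathscr{F}}\sPSh_{\mathrm{inj}}$, the map $q_X$ is in particular an $\mathscr{F}$-homology equivalence, hence $\mathscr{F}\{q_X\}: \mathscr{F}\{Q X\} \to \mathscr{F}\{X\}$ is a weak equivalence in $\sCoalg_{\mathscr{F}}$ and therefore an isomorphism in $\mathrm{Ho}(\sCoalg_{\mathscr{F}})$. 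Consequently $\mathbb{L}\mathscr{F}\{X\} \cong \mathscr{F}\{X\}$, naturally in $X$. Now Theorem B asserts that $\mathbb{L}\mathscr{F}\{-\}$ is fully faithful, and every fully faithful functor reflects isomorphisms; applying this to an assumed isomorphism $\mathscr{F}\{X\} \cong \mathscr{F}\{Y\}$ in $\mathrm{Ho}(\sCoalg_{\mathscr{F}})$ yields the desired isomorphism $X \cong Y$ in $\mathrm{Ho}(\mathrm{L}_{\mathscr{F}}\sPSh_{\mathrm{inj}})$.

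There is essentially no obstacle: the statement is purely formal once Theorem B is in hand. The only mild subtlety worth recording in the write-up is the verification that the direct descent of $\mathscr{F}\{-\}$ agrees with its total left derived functor, but this is immediate from the fact that $\mathscr{F}\{-\}$ already preserves all weak equivalences of $\mathrm{L}_{\mathscr{F}}\sPSh_{\mathrm{inj}}$, and so no cofibrant replacement alters the value in the homotopy category.
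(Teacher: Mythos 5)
Your argument is correct and is exactly the unpacking of what the paper leaves implicit when it calls this an ``immediate corollary'' of Theorem B: the forward direction because $\mathscr{F}\{-\}$ sends all weak equivalences of $\mathrm{L}_{\mathscr{F}}\sPSh_{\mathrm{inj}}$ to weak equivalences (by the very definition of $\mathscr{F}$-homology equivalence) and hence descends to the homotopy category where it agrees with $\mathbb{L}\mathscr{F}\{-\}$, and the reverse because a fully faithful functor reflects isomorphism of objects. There is nothing to add.
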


\subsection{}
Let $\mathscr{F}$ be the constant presheaf at a perfect field $\mathbb{F}$. Theorem B together with the isomorphism \eqref{etale2} can be used to give a nice description of the derived unit transformation of \eqref{Q-adjunction3} in this case. The main idea is again based on the natural splitting of the \'{e}tale part of an $\mathbb{F}$-coalgebra, but now this can be related to the $G$-invariants of the $\overline{\mathbb{F}}$-points rather than with the $\mathbb{F}$-points directly. This brings the action of the Galois group into the picture. The arguments are completely analogous to \cite[pp.541-543]{G}, so we only sketch the necessary details. 

Let $\overline{\mathbb{F}}$ denote the algebraic closure of $\mathbb{F}$ and $G$ the profinite Galois group. Let $\sPShG$ denote the category of simplicial presheaves of $G$-sets.  Say that a morphism between simplicial presheaves of $G$-sets is a \textit{local weak equivalence} (resp. \textit{cofibration}) if the morphism of the underlying simplicial presheaves (of sets), by forgetting the $G$-action, is a local weak equivalence (resp. monomorphism). 

\begin{theorem}
The category $\sPShG$ together with the classes of local weak equivalences and cofibrations define a combinatorial model category.
\end{theorem}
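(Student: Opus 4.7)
The plan is to apply Theorem \ref{j.smith} to $\sPShG$ with $W$ the class of local weak equivalences (on underlying simplicial presheaves) and $\mathrm{I}$ a set of monomorphisms chosen so that $\mathrm{Cof}(\mathrm{I})$ is the full class of monomorphisms. The category $\sPShG$ is locally presentable: the category $\mathrm{Set}(G)$ of continuous $G$-sets is a Grothendieck topos (the classifying topos of the profinite group $G$), and $\sPShG$ is a diagram category valued in it. The forgetful functor $U: \sPShG \to \sPSh$ preserves all small limits and all small colimits, since both are formed on underlying sets with the pointwise action (which remains continuous because open subgroups are closed under finite intersection). In particular $U$ is accessible and reflects monomorphisms, and admits a left adjoint $L: \sPSh \to \sPShG$ (the trivial-action functor) satisfying $UL = 1$, so $L$ preserves monomorphisms.

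Choose a regular cardinal $\kappa$ such that $\sPShG$ is locally $\kappa$-presentable, and let $\mathrm{I}$ denote the set of monomorphisms between $\kappa$-presentable objects in $\sPShG$. By \cite[Proposition 1.12]{Be}, $\mathrm{Cof}(\mathrm{I})$ equals the class of all monomorphisms, matching the desired cofibrations. Condition (i) of Theorem \ref{j.smith} is immediate. Condition (iv) goes exactly as in Proposition \ref{accessibility}: $W = (U^{\to})^{-1}(W_{\sPSh})$ is accessible and accessibly embedded in $\sPShG^{\to}$ by Proposition \ref{AR250}, since the class of local weak equivalences is accessible and accessibly embedded in $\sPSh^{\to}$ and $U^{\to}$ is accessible. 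For condition (ii), if $q \in \mathrm{I}\text{-}\inj$ then $q$ has the right lifting property against $L(j)$ for every generating monomorphism $j$ of $\sPSh_{\mathrm{inj}}$; by the $(L,U)$-adjunction this means $U(q)$ lifts against every such $j$, so $U(q)$ is a trivial fibration in $\sPSh_{\mathrm{inj}}$ and therefore $q \in W$.

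The main obstacle is condition (iii), closure of $\mathrm{Cof}(\mathrm{I}) \cap W$ under pushouts and transfinite compositions. Because $U$ preserves colimits, this reduces to the corresponding closure statement in $\sPSh$: the class of monomorphisms that are local weak equivalences is closed under pushouts and transfinite compositions. This is exactly the Boolean localization argument already used in the proof of Theorem \ref{modules} — the geometric morphism $\wp^{*} L^{2}: \sPSh \to \mathrm{sSh}(\mathcal{B})$ preserves monomorphisms and pushouts, and pushouts of weak equivalences of simplicial sets along monomorphisms remain weak equivalences — and no new ingredient is needed in the equivariant setting. With (i)-(iv) verified, Theorem \ref{j.smith} yields the desired combinatorial model category structure on $\sPShG$.
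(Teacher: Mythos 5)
Your overall strategy (apply Theorem \ref{j.smith} directly with $\mathrm{I}$ the set of monomorphisms between $\kappa$-presentable objects, and verify (i)--(iv)) is viable and close to the paper's, which instead first invokes the sectionwise injective combinatorial model structure on $\sPShG$ (via Goerss's model structure on simplicial $G$-sets \cite{G2} and \cite[Proposition A.2.8.2]{Lu}) and then localizes; your treatments of (i), (iii) and (iv) are correct and essentially identical to the paper's. However, your verification of condition (ii) rests on a false adjunction. The trivial-action functor $p^*:\sPSh\to\sPShG$ is left adjoint to the fixed-point functor $(-)^G$, not to the forgetful functor $U$: a $G$-equivariant map out of an object with trivial action lands in the $G$-fixed points of the target, so $\sPShG(p^*S,X)\cong\sPSh(S,X^G)$, which differs from $\sPSh(S,UX)$ in general. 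Worse, for $G$ an infinite profinite group $U$ has no left adjoint at all, since it fails to preserve infinite products (the product of continuous $G$-sets is the subset of the set-theoretic product consisting of tuples with open stabilizer); for the same reason your assertion that $U$ preserves all small limits is false, although that assertion is not load-bearing. Consequently the step ``$q$ lifts against $L(j)$, hence by the $(L,U)$-adjunction $U(q)$ lifts against $j$'' does not go through, and $\mathrm{I}\text{-}\inj\subseteq W$ is not established.

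The gap is repairable without changing your framework. Among your generating monomorphisms are the maps obtained by tensoring $\partial\Delta^n\hookrightarrow\Delta^n$ (with trivial $G$-action) with the presheaf concentrated at an object $U$ with value the finite orbit $G/H$, for $H$ an open subgroup of $G$. Since $\mathrm{Hom}_G(G/H\times S,X)\cong\mathrm{Hom}(S,X^H)$ for $S$ with trivial action, the right lifting property against these maps says precisely that $q^H(U):X^H(U)\to Y^H(U)$ is a trivial Kan fibration for every $U$ and every open $H$. As every continuous $G$-set is the filtered union of its $H$-fixed points over open subgroups $H$, and trivial fibrations of simplicial sets are closed under filtered colimits, it follows that $U(q)$ is a sectionwise trivial fibration, hence a sectionwise and therefore local weak equivalence. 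Alternatively, one can follow the paper and first establish the sectionwise injective model structure on $\sPShG$, whose $\mathrm{I}$-injectives are automatically sectionwise weak equivalences, and then localize.
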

\begin{proof}
The category $\sPShG$ can be equivalently viewed as the category of presheaves of simplicial $G$-sets. The category of simplicial $G$-sets, denoted by $\sSet(G)$, has a combinatorial model category structure where the cofibrations are the monomorphisms and the weak equivalences are the weak equivalences of the underlying simplicial sets \cite{G2}. Then there is an injective model category structure on $\sPShG$ where the cofibrations and the weak equivalences are defined sectionwise. This is again a combinatorial model category, see \cite[Proposition A.2.8.2]{Lu}. The required model category will be obtained as a left Bousfield localization of this injective model category by an application of Theorem \ref{j.smith}. We check that the conditions are satisfied: (i) and (ii) are obvious. Let $i: \ast \to G$ denote the obvious inclusion and $i^*: \sPShG \to \sPSh$ be the forgetful functor. Condition (iii) holds because $i^*$ preserves monomorphisms and pushouts. For (iv), note that the class of local weak equivalences in $\sPShG^{\to}$ is the inverse image of the class of local weak equivalences in $\sPSh$ under the accessible functor $i^*: \sPShG \to \sPSh$. Since the class of local weak equivalences in $\sPSh$ is accessible and accessibly embedded (by Theorem \ref{j.smith}), so is also the class of local weak equivalences in $\sPShG^{\to}$ by Proposition \ref{AR250}. Hence the conditions of Theorem \ref{j.smith} are satisfied and so the result follows.
\end{proof}

This model category will be denoted by $\sPShG_{\mathrm{inj}}$. Let $\overline{\mathscr{F}}$ denote the constant presheaf at $\overline{\mathbb{F}}$.  The next proposition shows that the comparison Quillen adjunction between $\mathrm{L}_{\mathscr{F}} \sPSh_{\mathrm{inj}}$ and $\sCoalg_{\mathscr{F}}$ factors through the model category $\sPShG_{\mathrm{inj}}$.

\begin{proposition} \label{G-Quillen-adj}
There are Quillen adjunctions
$$p^*: \sPSh_{\mathrm{inj}} \rightleftarrows \sPShG_{\mathrm{inj}}: (-)^G $$  
$$\overline{\mathscr{F}}\{-\}^G : \sPShG_{\mathrm{inj}} \rightleftarrows \sCoalg_{\mathscr{F}} : \rho_G.$$
\end{proposition}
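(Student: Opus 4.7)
The plan is to handle the two Quillen adjunctions separately, since they require quite different ingredients. For the first adjunction $p^* \dashv (-)^G$, the functor $p^*$ equips a simplicial presheaf with the trivial $G$-action, while $(-)^G$ takes $G$-invariants sectionwise and levelwise. Since both cofibrations and local weak equivalences in $\sPShG_{\mathrm{inj}}$ are defined by forgetting the $G$-action and checking the underlying simplicial presheaf, the functor $p^*$ preserves both classes tautologically. Hence $p^*$ is a left Quillen functor.

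For the second adjunction $\overline{\mathscr{F}}\{-\}^G \dashv \rho_G$, I would first establish the adjunction itself via Galois descent. Taking $G$-invariants gives an equivalence of categories $(-)^G : \Coalg_{\overline{\mathscr{F}}}^G \simeq \Coalg_{\mathscr{F}}$, inverse to extension of scalars (implicit in the discussion preceding \eqref{etale2}). Composing this with the $G$-equivariant form of the adjunction $\overline{\mathscr{F}}\{-\} \dashv \rho_{\overline{\mathscr{F}}}$ produces the chain of natural bijections
\[
\Coalg_{\mathscr{F}}(\overline{\mathscr{F}}\{X\}^G, A) \cong \Coalg_{\overline{\mathscr{F}}}^G(\overline{\mathscr{F}}\{X\}, A \otimes_{\mathscr{F}} \overline{\mathscr{F}}) \cong \mathrm{Set}(G)(X, \rho_G(A)),
\]
which extends to the simplicial presheaf setting levelwise.

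For the Quillen property I would verify separately that $\overline{\mathscr{F}}\{-\}^G$ preserves cofibrations and all local weak equivalences; the latter implies preservation of trivial cofibrations. A generating set of cofibrations of $\sPShG_{\mathrm{inj}}$ consists of monomorphisms between $\lambda$-presentable objects for a suitable regular cardinal $\lambda$. The free module functor $\overline{\mathscr{F}}\{-\}$ preserves monomorphisms, and $(-)^G$ preserves monomorphisms since it is a right adjoint (to the trivial action functor). Thus the composite sends generating cofibrations to $\Phi$-monomorphisms between presentable objects, which upon choosing $\kappa$ in Theorem~A sufficiently large lie in the generating set of cofibrations of $\sCoalg_{\mathscr{F}}$. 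Colimit-preservation by the left adjoint then extends this to all cofibrations.

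The main obstacle is showing that $\overline{\mathscr{F}}\{-\}^G$ preserves local weak equivalences. Given a local weak equivalence $f : X \to Y$ in $\sPShG$, the induced map $\overline{\mathscr{F}}\{f\}$ is a local weak equivalence of simplicial $\overline{\mathscr{F}}$-modules with $G$-action, since every local weak equivalence is an $\overline{\mathscr{F}}$-homology equivalence. By Galois descent, $\overline{\mathscr{F}}\{f\}^G \otimes_{\mathscr{F}} \overline{\mathscr{F}} \cong \overline{\mathscr{F}}\{f\}$, and the extension $\mathscr{F} \to \overline{\mathscr{F}}$ is sectionwise a field extension, hence faithfully flat. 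The conclusion should follow from the fact that faithful flatness reflects weak equivalences of simplicial modules. Care is required here because the definition of local weak equivalence passes through a Boolean localization: one must verify that the geometric morphism $\wp^* L^2$ commutes with both the extension of scalars and the $G$-invariants in such a way that the standard faithful-flatness argument applies Boolean-pointwise.
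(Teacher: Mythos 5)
Your proposal is correct and takes essentially the same route as the paper: the first adjunction is handled tautologically, cofibrations are preserved via generating monomorphisms between presentable objects, and weak equivalences are preserved via the Galois-descent isomorphism $\overline{\mathscr{F}} \otimes_{\mathscr{F}} \overline{\mathscr{F}}\{X\}^G \cong \overline{\mathscr{F}}\{X\}$ combined with faithful flatness of $\mathbb{F} \subseteq \overline{\mathbb{F}}$. The Boolean-localization compatibility you flag at the end is precisely the step the paper leaves implicit, and it does go through because $-\otimes_{\mathbb{F}} \overline{\mathbb{F}}$ is a direct sum of copies of the identity functor and hence commutes with $\wp^* L^2$ and with homology.
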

\begin{proof}
The left adjoint $p^*$ is the pullback functor induced by the unique functor $G \to \ast$, i.e., $p^*(X)$ is the simplicial presheaf $X$ endowed with the trivial $G$-action. It is clear that $p^*$ preserves cofibrations and trivial cofibrations, so it is a left Quillen functor. The right adjoint is the limit functor which, in this case, is just the functor of $G$-fixed points. 

For a simplicial presheaf of $G$-sets $X$, the simplicial $\mathscr{F}$-coalgebra $\overline{\mathscr{F}}\{X\}^G$ is defined sectionwise by the formula $$\overline{\mathscr{F}}\{X\}^G(U)_n = \overline{\mathbb{F}}\{X(U)_n\}^G.$$ The right adjoint $\rho_G$ is defined by the formula $$\rho_G(A)(U)_n = \Coalg_{\overline{\mathbb{F}}} (\overline{\mathbb{F}}, A(U)_n \otimes_{\mathbb{F}} \overline{\mathbb{F}}).$$
$\overline{\mathscr{F}}\{-\}^G$ clearly preserves cofibrations. Moreover, there is a natural isomorphism (see the proof of \cite[Lemma 4.3]{G}),
\begin{equation}\label{an-iso} 
\overline{\mathscr{F}} \otimes_{\mathscr{F}} \overline{\mathscr{F}}\{X\}^G \stackrel{\cong}{\to} \overline{\mathscr{F}}\{X\}
\end{equation}
from which it follows that $\overline{\mathscr{F}}\{-\}^G$ sends $\overline{\mathscr{F}}$-homology equivalences to weak equivalences. In particular, it preserves trivial cofibrations and so it is a left Quillen functor.  
\end{proof}

Let $\mathrm{L}_{\overline{\mathscr{F}}} \sPShG_{\mathrm{inj}}$ denote the left Bousfield localization of $\sPShG_{\mathrm{inj}}$ at the class of $\overline{\mathscr{F}}$-homology equivalences of the underlying simplicial presheaves. The proof that this Bousfield localization exists is similar to the proof of Theorem \ref{Bousfield}. Moreover, there is an induced Quillen adjunction (cf. \cite[Lemma 4.3]{G}),
\begin{equation}\label{Galoiscontext}
\overline{\mathscr{F}}\{-\}^G : \mathrm{L}_{\overline{\mathscr{F}}}\sPShG_{\mathrm{inj}} \rightleftarrows \sCoalg_{\mathscr{F}} : \rho_G
\end{equation}
and an associated derived adjunction (cf. \cite[Proposition 4.4]{G}),
\begin{equation*}\label{Galoiscontext2}
\mathbb{L} \overline{\mathscr{F}}\{-\}^G : \mathrm{Ho}(\mathrm{L}_{\overline{\mathscr{F}}}\sPShG_{\mathrm{inj}}) \rightleftarrows \mathrm{Ho}(\sCoalg_{\mathscr{F}}) : \mathbb{R} \rho_G.
\end{equation*}
 
Note that the natural isomorphism \eqref{an-iso} shows that the adjunction \eqref{Galoiscontext} is a coreflection, i.e., the unit transformation of the adjunction is a natural isomorphism. The following theorem is the analogue of Theorem B for the presheaf $\mathscr{F}$.

\begin{theorem} \label{fullyfaithful2}
The functor $\mathbb{L} \overline{\mathscr{F}}\{-\}^G : \mathrm{Ho}(\mathrm{L}_{\overline{\mathscr{F}}}\sPShG_{\mathrm{inj}}) \rightarrow \mathrm{Ho}(\sCoalg_{\mathscr{F}})$ is fully faithful.
\end{theorem}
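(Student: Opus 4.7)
The plan is to adapt the proof of Theorem B to the Galois-equivariant setting. The adjunction \eqref{Galoiscontext} is a coreflection (its unit is a natural isomorphism, as noted via \eqref{an-iso}), so to prove that $\mathbb{L}\overline{\mathscr{F}}\{-\}^G$ is fully faithful it suffices to show that the derived unit is a natural isomorphism in $\mathrm{Ho}(\mathrm{L}_{\overline{\mathscr{F}}}\sPShG_{\mathrm{inj}})$. I would reduce this to the statement that $\rho_G$ sends weak equivalences of simplicial $\mathscr{F}$-coalgebras to $\overline{\mathscr{F}}$-homology equivalences, after which the argument follows the pattern of the final two paragraphs of the proof of Theorem B.

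The core step is this preservation property, a Galois-equivariant analogue of Proposition \ref{invariance-etale-splitting}. Unwinding definitions sectionwise and degreewise, $\rho_G(A)(U)_n = \rho(A(U)_n \otimes_{\mathbb{F}} \overline{\mathbb{F}})$ is the set of $\overline{\mathbb{F}}$-points of the $\overline{\mathbb{F}}$-coalgebra $A(U)_n \otimes_{\mathbb{F}} \overline{\mathbb{F}}$, so applying the isomorphism \eqref{etale} in each degree and at each section yields a natural isomorphism $\overline{\mathscr{F}}\{\rho_G(A)\} \cong \acute{E}t(A \otimes_{\mathscr{F}} \overline{\mathscr{F}})$ of simplicial $\overline{\mathscr{F}}$-coalgebras. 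Given a weak equivalence $f: A \to B$ in $\sCoalg_{\mathscr{F}}$, the exactness of base change over a presheaf of fields $-\otimes_{\mathscr{F}} \overline{\mathscr{F}}$, checked on underlying simplicial modules via the Boolean localization description used in Theorem \ref{modules}, implies that $f \otimes_{\mathscr{F}} \overline{\mathscr{F}}$ is again a weak equivalence. The decomposition theorem provides a natural split monomorphism $\acute{E}t(\cdot) \hookrightarrow \cdot$ of simplicial $\overline{\mathscr{F}}$-coalgebras, so $\acute{E}t(f \otimes_{\mathscr{F}} \overline{\mathscr{F}})$ is a retract of $f \otimes_{\mathscr{F}} \overline{\mathscr{F}}$ in the arrow category and therefore also a weak equivalence. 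Through the identification above, this says $\overline{\mathscr{F}}\{\rho_G(f)\}$ is a weak equivalence, i.e., $\rho_G(f)$ is an $\overline{\mathscr{F}}$-homology equivalence.

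With the preservation property in hand, the conclusion is formal. Every object of $\sPShG_{\mathrm{inj}}$ is cofibrant (the cofibrations being the monomorphisms), so for $X \in \sPShG_{\mathrm{inj}}$ the derived unit at $X$ is represented by the composite $X \cong \rho_G(\overline{\mathscr{F}}\{X\}^G) \xrightarrow{\rho_G(g)} \rho_G\bigl((\overline{\mathscr{F}}\{X\}^G)^f\bigr)$, where $g$ is a functorial fibrant replacement in $\sCoalg_{\mathscr{F}}$. The first arrow is the coreflection unit and is an isomorphism; the second is $\rho_G$ applied to a weak equivalence, hence an $\overline{\mathscr{F}}$-homology equivalence by the previous paragraph. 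Therefore the derived unit is an isomorphism in $\mathrm{Ho}(\mathrm{L}_{\overline{\mathscr{F}}}\sPShG_{\mathrm{inj}})$ and $\mathbb{L}\overline{\mathscr{F}}\{-\}^G$ is fully faithful.

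The principal technical obstacle will be verifying that the isomorphism $\overline{\mathscr{F}}\{\rho_G(A)\} \cong \acute{E}t(A \otimes_{\mathscr{F}} \overline{\mathscr{F}})$ is genuinely natural in $A$ at the level of simplicial $\overline{\mathscr{F}}$-coalgebras (and in particular in the arrow category), which is what legitimises the retract argument. This should reduce, via the sectionwise and degreewise description of all the functors involved, to the naturality of \eqref{etale} with respect to coalgebra maps over an algebraically closed field and the naturality of the decomposition theorem splitting, both of which are part of the classical theory recalled in the opening of section \ref{ThmB}.
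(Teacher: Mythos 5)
Your proposal is correct and follows essentially the same route as the paper: reduce to showing that $\rho_G$ sends weak equivalences to $\overline{\mathscr{F}}$-homology equivalences, identify $\overline{\mathscr{F}}\{\rho_G(A)\}$ with the \'etale part via \eqref{etale} (the paper phrases this through \eqref{etale2} and \eqref{an-iso}, which amounts to the same identification), and conclude by the retract argument from the decomposition theorem, exactly as in Proposition \ref{invariance-etale-splitting}. Your explicit treatment of the flat base change $-\otimes_{\mathscr{F}}\overline{\mathscr{F}}$ preserving local weak equivalences is a point the paper leaves implicit, and it is a worthwhile addition.
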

\begin{proof}
Similarly to the proof of Theorem B, it suffices to show that $\rho_G$ preserves the weak equivalences, i.e., it sends weak equivalences to $\overline{\mathscr{F}}$-homology equivalences. This is a consequence of the natural splitting of the \'{e}tale part of an $\mathbb{F}$-coalgebra similarly to the proof of Proposition \ref{invariance-etale-splitting}. The result follows from the identification of the \'{e}tale part by isomophism \eqref{etale2} and the isomorphism \eqref{an-iso}.
\end{proof}

The last theorem can be used to give a nice description of the unit transformation of the derived adjunction $$ \mathbb{L} \mathscr{F}\{-\} : \mathrm{L}_{\mathscr{F}}\sPSh_{\mathrm{inj}} \rightleftarrows \sCoalg_{\mathscr{F}} : \mathbb{R} \rho.$$
Let $X$ be a simplicial presheaf and $p^*(X)$ be the simplicial presheaf $X$ endowed with the trivial $G$-action. Let $p^*(X) \stackrel{\sim}{\to} p^*(X)^f$ be a functorial fibrant replacement of $p^*(X)$ in $\mathrm{L}_{\overline{\mathscr{F}}} \sPShG_{\mathrm{inj}}$. We have the following corollary. 

\begin{corollary} \label{derived-unit-perfect-case}
The canonical derived unit map $X \to (\mathbb{R}\rho)(\mathscr{F}\{X\})$ can be identified, up to a natural isomorphism in the homotopy category, with the map $X \to (p^*(X)^f)^G$.
\end{corollary}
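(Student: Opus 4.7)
The plan is to factor the derived right adjoint $\mathbb{R}\rho$ through the equivariant model category $\mathrm{L}_{\overline{\mathscr{F}}}\sPShG_{\mathrm{inj}}$ and use the good homotopical behaviour of $\rho_G$ that was established in Theorem \ref{fullyfaithful2}. The starting point is the strict identity of right adjoints
\begin{equation*}
\rho \;=\; (-)^G \circ \rho_G,
\end{equation*}
which follows by uniqueness of right adjoints from the fact that $\overline{\mathscr{F}}\{-\}^G \circ p^* = \mathscr{F}\{-\}$ on left adjoints (an element $\sum a_i x_i \in \overline{\mathbb{F}}\{X\}$ with trivial $G$-action on $X$ is $G$-fixed precisely when every $a_i$ lies in $\mathbb{F}$). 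Both $\rho_G$ and $(-)^G$ are right Quillen functors by Proposition \ref{G-Quillen-adj} and the discussion preceding \eqref{Galoiscontext}, and in particular $\rho_G$ sends fibrant objects to fibrant objects, so the derived composition formula $\mathbb{R}\rho = \mathbb{R}(-)^G \circ \mathbb{R}\rho_G$ holds.

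Fix a fibrant replacement $\mathscr{F}\{X\} \xrightarrow{\sim} \mathscr{F}\{X\}^f$ in $\sCoalg_{\mathscr{F}}$. Then $\mathbb{R}\rho(\mathscr{F}\{X\})$ is represented by $\rho(\mathscr{F}\{X\}^f) = \rho_G(\mathscr{F}\{X\}^f)^G$. I would then argue that $\rho_G(\mathscr{F}\{X\}^f)$ can be used as a fibrant replacement $p^*(X)^f$ in $\mathrm{L}_{\overline{\mathscr{F}}}\sPShG_{\mathrm{inj}}$: it is fibrant because $\rho_G$ is right Quillen, and the natural map
\begin{equation*}
p^*(X) \;\cong\; \rho_G\bigl(\overline{\mathscr{F}}\{p^*(X)\}^G\bigr) \;=\; \rho_G(\mathscr{F}\{X\}) \;\longrightarrow\; \rho_G(\mathscr{F}\{X\}^f)
\end{equation*}
is a weak equivalence, since its first leg is the unit of the coreflection \eqref{Galoiscontext} (an isomorphism by \eqref{an-iso}) and its second leg is a weak equivalence in $\mathrm{L}_{\overline{\mathscr{F}}}\sPShG_{\mathrm{inj}}$ by the étale-splitting argument of Theorem \ref{fullyfaithful2}, which shows that $\rho_G$ carries arbitrary weak equivalences in $\sCoalg_{\mathscr{F}}$ to $\overline{\mathscr{F}}$-homology equivalences. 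Taking $G$-fixed points then yields the identification $(p^*(X)^f)^G \cong \rho_G(\mathscr{F}\{X\}^f)^G = \rho(\mathscr{F}\{X\}^f)$, a representative of $\mathbb{R}\rho(\mathscr{F}\{X\})$.

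Finally, to match the derived unit maps, observe that $X \to \mathbb{R}\rho(\mathscr{F}\{X\})$ is by definition represented by the ordinary unit $X = \rho(\mathscr{F}\{X\}) \to \rho(\mathscr{F}\{X\}^f)$, and by the identifications of the previous paragraph this is precisely the map $X \to (p^*(X)^f)^G$ induced by the fibrant replacement $p^*(X) \to p^*(X)^f$. The only real content is the middle step — that $\rho_G$ really delivers a fibrant replacement in the $\overline{\mathscr{F}}$-local equivariant model structure — and this is the step that relies essentially on the perfect-field hypothesis via the natural splitting of the étale part used in Theorem \ref{fullyfaithful2}.
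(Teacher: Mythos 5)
Your argument is correct and follows the same route as the paper: factor the adjunction $(\mathscr{F}\{-\},\rho)$ through $\mathrm{L}_{\overline{\mathscr{F}}}\sPShG_{\mathrm{inj}}$ via Proposition \ref{G-Quillen-adj} and use that the derived unit of $(\overline{\mathscr{F}}\{-\}^G,\rho_G)$ is an isomorphism (equivalently, that $\rho_G$ of a fibrant replacement of $\mathscr{F}\{X\}$ serves as a fibrant replacement of $p^*(X)$), which is exactly the content of Theorem \ref{fullyfaithful2}. Your write-up merely makes explicit the composite-derived-unit bookkeeping that the paper's two-line proof leaves implicit.
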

\begin{proof}
Note that the Quillen adjunction $(\mathscr{F}\{-\},\rho)$ is the composition of the Quillen adjunctions of Proposition \ref{G-Quillen-adj}. By Theorem \ref{fullyfaithful2}, the derived unit transformation of the 
Quillen adjunction $(\overline{\mathscr{F}}\{-\}^G , \rho_G)$ is a natural isomorphism. Hence the result follows.  
\end{proof}

\subsection{} We end with a remark about the general case of an arbitrary presheaf $\mathscr{F}$ of perfect fields. The non-functoriality of algebraic closures becomes the main issue in treating this case using similar arguments. On the other hand, note that the class of $\mathscr{F}$-homology equivalences depends only on the characteristics of the fields involved.

We only comment on the following special case. Suppose that the Grothendieck site $\mathcal{C}$ has a terminal object denoted by 1. Examples include the site of open subsets of a topological space. Let $\mathscr{F}$ be an arbitrary presheaf of perfect fields on $\mathcal{C}$ and let $\mathscr{F}_1$ denote the constant presheaf at $\mathscr{F}(1)=\mathbb{F}$. Thus there is a morphism of presheaves $\mathscr{F}_1 \to \mathscr{F}$. Let $\overline{\mathbb{F}}$ be the algebraic closure of $\mathbb{F}$, $G$ the profinite Galois group and $\overline{\mathscr{F}}_1$ the constant presheaf at $\overline{\mathbb{F}}$. 

The Quillen adjunction 
\begin{equation} \label{composite-Q-adj}
\mathscr{F}_1\{-\}: \mathrm{L}_{\mathscr{F}} \sPSh_{\mathrm{inj}} \rightleftarrows \sCoalg_{\mathscr{F}_1}: \rho
\end{equation}
can written as the composition of the following three adjunctions: the Quillen equivalence
$$1: \mathrm{L}_{\mathscr{F}}\sPSh_{\mathrm{inj}} \to \mathrm{L}_{\mathscr{F}_1}\sPSh_{\mathrm{inj}}: 1$$
and the Quillen adjunctions  
$$p^*: \mathrm{L}_{\mathscr{F}_1} \sPSh_{\mathrm{inj}} \rightleftarrows \mathrm{L}_{\overline{\mathscr{F}}_1} \sPShG_{\mathrm{inj}}: (-)^G $$  
$$\overline{\mathscr{F}}_1\{-\}^G : \mathrm{L}_{\overline{\mathscr{F}}_1} \sPShG_{\mathrm{inj}} \rightleftarrows \sCoalg_{\mathscr{F}_1} : \rho_G.$$

Therefore the derived unit transformation of \eqref{composite-Q-adj} can be expressed in terms of the derived unit transformation of the Quillen adjunction $(\mathscr{F}_1\{-\}, \rho)$ as described in Corollary \ref{derived-unit-perfect-case}.

\end{document}